\DeclareFontFamily{OT1}{rsfs}{}
\DeclareFontShape{OT1}{rsfs}{n}{it}{<-> rsfs10}{}
\DeclareMathAlphabet{\mathscr}{OT1}{rsfs}{n}{it}
\newtheorem{theorem}{Theorem}[section]
\newtheorem{lemma}[theorem]{Lemma}
\newtheorem{corol}[theorem]{Corollary}
\newtheorem{prop}[theorem]{Proposition}
\theoremstyle{definition} \newtheorem{defin}[theorem]{Definition}}
\theoremstyle{remark} \newtheorem{remark}[theorem]{Remark}
\newtheorem{example}[theorem]{Example}}
\newcommand{\Pbb}{{\mathbb{P}}}
\newcommand{\cE}{{\mathscr E}}
\newcommand{\cL}{{\mathscr L}}
\newcommand{\cO}{{\mathscr O}}
\newcommand{\cQ}{{\mathscr Q}}
\newcommand{\cS}{{\mathscr S}}
\newcommand{\htau}{{\widehat \tau}}
\newcommand{\hL}{{\widehat L}}
\newcommand{\cQd}{{\cQ^\vee}}
\newcommand{\cSd}{{\cS^\vee}}
\newcommand{\vZ}{{\widecheck Z}}
\newcommand{\Til}[1]{{\widetilde{#1}}}
\DeclareMathOperator{\rk}{rk}
\DeclareMathOperator{\codim}{codim}
\DeclareMathOperator{\Hom}{Hom}
\newcommand{\qede}{\hfill$\lrcorner$}
\title{
Degrees of projections of rank loci
}
\author{Paolo Aluffi}
\address{
Mathematics Department, 
Florida State University,
Tallahassee FL 32306, U.S.A.
}
\email{aluffi@math.fsu.edu}
\begin{document}

\begin{abstract}
We provide formulas for the degrees of the projections of the locus of square matrices with
given rank from linear spaces spanned by a choice of matrix entries. The motivation for
these computations stem from applications to `matrix rigidity'; we also view them as an
excellent source of examples to test methods in intersection theory, particularly computations 
of Segre classes. Our results are generally expressed in terms of intersection numbers in
Grassmannians, which can be computed explicitly in many cases. We observe that, surprisingly 
(to us), these degrees appear to match the numbers of {\em Kekul\'e structures\/} 
of certain {\em benzenoid hydrocarbons,\/} and arise in many other contexts with no 
apparent direct connection to the enumerative geometry of rank conditions.
\end{abstract}

\maketitle


\section{Introduction}\label{intro}
Consider the space $\Pbb^{n^2-1}$ of $n\times n$ complex matrices. If $S$ is a set
of $s$ coordinates in this space, i.e., a set of entries of $n\times n$ matrices, we denote
by $L_S$ the subspace of $\Pbb^{n^2-1}$ spanned by $S$, and we consider the
projection $\pi_S: \Pbb^{n^2-1} \dashrightarrow \Pbb^{n^2-s-1}$ centered at $L_S$.
We depict $S$ by highlighting its entries in an $n\times n$ grid:
\begin{center}
\begin{tikzpicture}
\path [fill=red] (0.5,0) rectangle (1,-.5);
\path [fill=red] (1,0) rectangle (1.5,-.5);
\path [fill=red] (0,-0.5) rectangle (.5,-1);
\path [fill=red] (0,-1.5) rectangle (.5,-2);
\path [fill=red] (1.5,-1.5) rectangle (2,-2);
\draw (0,0) --(0,-2);
\draw (.5,0) --(.5,-2);
\draw (1,0) --(1,-2);
\draw (1.5,0) --(1.5,-2);
\draw (2,0) --(2,-2);
\draw (0,0) --(2,0);
\draw (0,-.5) --(2,-.5);
\draw (0,-1) --(2,-1);
\draw (0,-1.5) --(2,-1.5);
\draw (0,-2) --(2,-2);
\end{tikzpicture}
\end{center}

The problem we approach in this paper is the computation of the degree of the closure
of the image via $\pi_S$ of the locus $\sigma_{n,r}$ of matrices of rank $\le r$. 

The original motivation for this problem stems from complexity theory.
In~\cite{arXiv1310.1362}, following L.~Valiant (cf.~\cite{valiant}),
the {\em $r$-rigidity\/} of an $n\times n$ matrix $M$ is defined to be the smallest $s$ 
such that there exists a set $S$ of $s$ entries for which $\pi_S(M)\in \overline{\pi_S(\sigma_{n,r})}$. 
There is interest in finding explicit equations for the set of matrices with given rigidity;
these may be used to give lower bounds for the rigidity of a given matrix, with applications
to the complexity of performing explicit operations such as matrix multiplication. We
address the reader to~\cite{arXiv1310.1362} for a thorough discussion of this motivation,
and for a lucid explanation of the algebro-geometric aspects of this question. One idea
introduced in~\cite{arXiv1310.1362} is that the {\em degree\/} $d_{n,r,S}$
of the projection $\overline{\pi_S(\sigma_{n,r})}$ (or equivalently of the join of $L_S$ and 
$\sigma_{n,r}$) gives useful information on polynomial 
equations for matrices with given rigidity. A method for the computation of $d_{n,r,S}$
by successive projections from the entries in $S$ is introduced in~\cite{arXiv1310.1362}
and used to obtain several concrete results. An explicit computation of the value of 
$d_{n,r,S}$ for sets $S$ with no two elements on the same row or column (i.e., up to 
permutations, consisting of diagonal entries) is also presented in~\cite{arXiv1310.1362}, 
subject to a conjectural statement of the behavior of taking joins and tangent spaces in 
the situation arising in this case. See~\S\ref{ss:manyrows2} for a more precise statement
of this conjecture.

Our main purpose in this paper is to illustrate a different method for the computation 
of $d_{n,r,S}$, based on Fulton-MacPherson intersection theory. (The precise definition
we will adopt for $d_{n,r,S}$ is given in \S\ref{ss:dnrS}; it is the degree of 
$\overline{\pi_S(\sigma_{n,r})}$ when the projection is birational to its image.)
The computation is
reduced to the computation of the {\em Segre class\/} of the intersection 
$L_S\cap \sigma_{n,r}$ in $\sigma_{n,r}$, and this in turn may be expressed in terms
of a Segre class involving the standard resolution of~$\sigma_{n,r}$. 
Computing these Segre classes in complete generality appears to be a very challenging 
problem, and it would be worthwhile, for this application as well as other
applications of Segre classes, to develop techniques capable of performing such 
computations. We carry this out for sets~$S$ which can be constructed from `blocks' 
of certain types. For example, we give a formula for the case in which no two elements 
of $S$ lie in the same column
or row of the matrix. In particular, this verifies the formula for diagonal $S$ mentioned 
above, thereby (indirectly) proving the conjectural statement from~\cite{arXiv1310.1362} 
on which that depends. Among other cases we consider, we include some for which
entries of $S$ may lie on the same row and the same column as other entries
in $S$. However, the complexity of the formulas increases rapidly. The largest case
covered by our results consists of the `thickened' diagonal:
\begin{center}
\begin{tikzpicture}
\path [fill=red] (0,0) rectangle (1,-1);
\path [fill=red] (1,-1) rectangle (2,-2);
\path [fill=red] (2,-2) rectangle (3,-3);
\draw (0,0) --(0,-3.25);
\draw (.5,0) --(.5,-3.25);
\draw (1,0) --(1,-3.25);
\draw (1.5,0) --(1.5,-3.25);
\draw (2,0) --(2,-3.25);
\draw (2.5,0) --(2.5,-3.25);
\draw (3,0) --(3,-3.25);
\draw (0,0) --(3.25,0);
\draw (0,-.5) --(3.25,-.5);
\draw (0,-1) --(3.25,-1);
\draw (0,-1.5) --(3.25,-1.5);
\draw (0,-2) --(3.25,-2);
\draw (0,-2.5) --(3.25,-2.5);
\draw (0,-3) --(3.25,-3);
\end{tikzpicture}
\end{center}
See \S\ref{ss:22square} to get a sense of how involved the formulas become, even for such
comparatively mild sets $S$. In general, we can deal with sets $S$ consisting (up to 
permutations) of blocks with arbitrarily many entries in a row or a column, 
three adjacent entries, and $2\times 2$ squares.
\begin{center}
\begin{tikzpicture}
\path [fill=red] (0,-.5) rectangle (0.66,-1);
\path [fill=red] (0.84,-.5) rectangle (2,-1);
\draw (0,.25) --(0,-2.25);
\draw (.5,.25) --(.5,-2.25);
\draw [dashed] (.75,.25) --(.75,-2.25);
\draw (1,.25) --(1,-2.25);
\draw (1.5,.25) --(1.5,-2.25);
\draw (2,.25) --(2,-2.25);
\draw (-.25,0) --(0.66,.0);
\draw (0.84,0) --(2.25,.0);
\draw (-.25,-.5) --(0.66,-.5);
\draw (0.84,-.5) --(2.25,-.5);
\draw (-.25,-1) --(0.66,-1);
\draw (0.84,-1) --(2.25,-1);
\draw (-.25,-1.5) --(0.66,-1.5);
\draw (0.84,-1.5) --(2.25,-1.5);
\draw (-.25,-2) --(0.66,-2);
\draw (0.84,-2) --(2.25,-2);
\end{tikzpicture}
\quad
\quad
\begin{tikzpicture}
\path [fill=red] (.5,0) rectangle (1,-0.66);
\path [fill=red] (.5,-0.84) rectangle (1,-2);
\draw (0,.25) --(0,-0.66);
\draw (0,-0.84) --(0,-2.25);
\draw (.5,.25) --(.5,-0.66);
\draw (.5,-0.84) --(.5,-2.25);
\draw (1,.25) --(1,-0.66);
\draw (1,-0.84) --(1,-2.25);
\draw (1.5,.25) --(1.5,-0.66);
\draw (1.5,-0.84) --(1.5,-2.25);
\draw (2,.25) --(2,-0.66);
\draw (2,-0.84) --(2,-2.25);
\draw (-.25,0) --(2.25,.0);
\draw (-.25,-.5) --(2.25,-.5);
\draw [dashed] (-.25,-.75) --(2.25,-.75);
\draw (-.25,-1) --(2.25,-1);
\draw (-.25,-1.5) --(2.25,-1.5);
\draw (-.25,-2) --(2.25,-2);
\end{tikzpicture}
\quad
\quad
\begin{tikzpicture}
\path [fill=red] (.5,-.5) rectangle (1.5,-1);
\path [fill=red] (.5,-1) rectangle (1,-1.5);
\draw (0,.25) --(0,-2.25);
\draw (.5,.25) --(.5,-2.25);
\draw (1,.25) --(1,-2.25);
\draw (1.5,.25) --(1.5,-2.25);
\draw (2,.25) --(2,-2.25);
\draw (-.25,0) --(2.25,.0);
\draw (-.25,-.5) --(2.25,-.5);
\draw (-.25,-1) --(2.25,-1);
\draw (-.25,-1.5) --(2.25,-1.5);
\draw (-.25,-2) --(2.25,-2);
\end{tikzpicture}
\quad
\quad
\begin{tikzpicture}
\path [fill=red] (.5,-.5) rectangle (1.5,-1.5);
\draw (0,.25) --(0,-2.25);
\draw (.5,.25) --(.5,-2.25);
\draw (1,.25) --(1,-2.25);
\draw (1.5,.25) --(1.5,-2.25);
\draw (2,.25) --(2,-2.25);
\draw (-.25,0) --(2.25,.0);
\draw (-.25,-.5) --(2.25,-.5);
\draw (-.25,-1) --(2.25,-1);
\draw (-.25,-1.5) --(2.25,-1.5);
\draw (-.25,-2) --(2.25,-2);
\end{tikzpicture}
\end{center}
The Segre class computations needed to deal with these cases are streamlined by the
use of a `twisted' version of Segre classes (Definition~\ref{def:twisSc}) closely related 
to a class introduced by Steven Kleiman and Anders Thorup (\cite{MR1393259}). 
The needed facts about this notion are summarized in Lemmas~\ref{lem:twifacts},
\ref{lem:scone}, and~\ref{lem:splay}. The tool that allows us to combine blocks of entries 
into larger sets $S$ stems from Lemma~\ref{lem:splay}, which is a variation on a formula 
for Segre classes of {\em splayed intersections\/} from~\cite{1406.1182}, and a formula 
for joins, Lemma~\ref{lem:scone}, which is an ingredient in the proof (given elsewhere)
of the rationality of a `zeta function' recording the behavior of Segre classes of ind-limits
of projective schemes (\cite{Segrat}).

For any set $S$ of entries in an $n\times n$ matrix, and any rank $r$, we define by
means of the twisted Segre class a class 
$\Sigma_{n,r,S}$ in the Chow group of the Grassmannian $G(n-r,n)$, which carries the 
information of the degree $d_{n,r,S}$. This is the {\em Grassmann class\/} of $S$ 
w.r.t.~$r$, Definition~\ref{def:GrassS}. We prove that
if $S$ consists of two blocks $S_1$, $S_2$ (in the sense specified 
in~\S\ref{ss:conessplayed}), then the corresponding Grassmann classes satisfy
a multiplicativity property:
\[
(1-\Sigma_{n,r,S}) = (1-\Sigma_{n,r,S'}) (1-\Sigma_{n,r,S''})
\]
(Theorem~\ref{thm:Sigblocks}). This multiplicative structure organizes the numbers
$d_{n,r,S}$ to some extent, but is only visible at the level of the corresponding
Grassmann classes. Analyzing the structure of these classes further appears to be
an interesting project.

We compute $\Sigma_{n,r,S}$ explicitly for the elementary blocks listed above.
Once $\Sigma_{n,r,S}$ is known for a set $S$, the degree $d_{n,r,S}$ may be computed 
as the intersection product $\int c(\cS^\vee)^n\cap \Sigma_{n,r,S}$ in the Grassmannian 
(Theorem~\ref{thm:mainred}). We express our results in terms of these intersection numbers;
the Macaulay2 package Schubert2 (\cite{S2}) can perform such computations at impressive 
speed. (Use of both Macaulay2 and Schubert2 was essential to us in this work.) Of course 
Macaulay2 (\cite{M2}) is capable of performing the degree computation directly; however, 
computation times become prohibitively long via such direct methods when the matrices 
reach even modest
size. Computations performed by implementing the results given here are much faster.
For example, let $S$ consist of six diagonal and three subdiagonal entries in
a $7\times 7$ matrix (Figure~\ref{fig:ladder}).
\begin{figure}
\begin{tikzpicture}
\path [fill=red] (0,0) rectangle (.5,-.5);
\path [fill=red] (0,-.5) rectangle (.5,-1);
\path [fill=red] (.5,-.5) rectangle (1,-1);
\path [fill=red] (1,-1) rectangle (1.5,-1.5);
\path [fill=red] (1,-1.5) rectangle (1.5,-2);
\path [fill=red] (1.5,-1.5) rectangle (2,-2);
\path [fill=red] (2,-2) rectangle (2.5,-2.5);
\path [fill=red] (2,-2.5) rectangle (2.5,-3);
\path [fill=red] (2.5,-2.5) rectangle (3,-3);
\draw (0,0) --(0,-3.5);
\draw (.5,0) --(.5,-3.5);
\draw (1,0) --(1,-3.5);
\draw (1.5,0) --(1.5,-3.5);
\draw (2,0) --(2,-3.5);
\draw (2.5,0) --(2.5,-3.5);
\draw (3,0) --(3,-3.5);
\draw (3.5,0) --(3.5,-3.5);
\draw (0,0) --(3.5,0);
\draw (0,-.5) --(3.5,-.5);
\draw (0,-1) --(3.5,-1);
\draw (0,-1.5) --(3.5,-1.5);
\draw (0,-2) --(3.5,-2);
\draw (0,-2.5) --(3.5,-2.5);
\draw (0,-3) --(3.5,-3);
\draw (0,-3.5) --(3.5,-3.5);
\end{tikzpicture}
\caption{}
\label{fig:ladder}
\end{figure}
Computing the degree of the projection of $\sigma_{7,2}$ (which happens to 
equal~$13395$; this is a dimension~$23$ subvariety of $\Pbb^{39}$)
took us about $20$ minutes by a `direct' method using Macaulay2 on on a quad-core 
($4 \times 3.4$GHz) computer, and about $.004$ seconds on the same equipment by 
implementing in Schubert2 the formulas proven here. (This example is covered by the 
formula given in Theorem~\ref{thm:manycorners}.) 
Thus, the results proven here allow for much more extensive experimentation with these 
numbers.

It would be interesting to perform the intersection products explicitly and obtain closed 
formulas for $d_{n,r,S}$ for all the cases considered here. We do this in some cases,
for example, when no two elements of $S$ are in the same row or column; this is the
case relevant to the conjecture from~\cite{arXiv1310.1362} mentioned above.

After compiling the results of many explicit computations, we noticed that many of the 
numbers $d_{n,r,S}$ appear in the existing literature in contexts far removed from the 
enumerative geometry of rank conditions; we owe this surprising (to us) observation 
to perusal of~\cite{oeis}.
For example, there is a persistent appearance of series reproducing the numbers of
{\em Kekul\'e structures\/} of many different types of {\em benzenoid 
hydrocarbons\/} (\cite{benzenoids1988kekule}). We ignore whether the combinatorial 
literature on these numbers links them directly to intersection products in Grassmannian. 
It would seem worthwhile to establish such a direct connection, and understand why these
numbers of interest in chemistry should arise as degrees of projections of rank 
loci. The numbers also appear 
in other contexts, for reasons that are equally mysterious to us. See~\S\ref{ss:benzene},
\S\ref{ss:benzene2}, Remark~\ref{rem:onecor}, Example~\ref{ex:squareexs}.
\medskip

It would be straightforward to extend the results presented here to rank loci in the
space $\Pbb^{mn-1}$ of $m\times n$ matrices.
\medskip

{\em Acknowledgments.} The author thanks J.~M.~Landsberg for the invitation to visit 
Texas A\&M and for discussions leading to the question addressed in this paper.
The author's research is partially supported by a Simons collaboration grant.
\newpage


\section{Preliminaries}\label{prelims}

\subsection{}\label{ss:intropre}
We work over an algebraically closed field $\kappa$.
We identify the space of $n\times n$ matrices with entries in $\kappa$, up to scalars, with 
$\Pbb^{n^2-1}$; the entries of the matrices are homogeneous coordinates in this
projective space. For a set $S$ of $s\ge 1$ entries, $L_S$ will denote the linear subspace
of $\Pbb^{n^2-1}$ spanned by $S$.

We let $\sigma_{n,r}\subseteq \Pbb^{n^2-1}$ denote the set of $n\times n$ matrices of 
rank~$\le r$. It is often notationally more convenient to work with the {\em co\/}rank 
$k=n-r$; we will let $\tau_{n,k}=\sigma_{n,n-k}$. Thus, $\tau_{n,k}$ consists of the set of 
$n\times n$ matrices $A$ such that $\dim \ker A\ge k$. The algebraic set $\tau_{n,k}$
is defined (with its reduced structure) by the ideal generated by the $n-k+1$ minors
of $A\in \Pbb^{n^2-1}$. A simple dimension count shows that $\dim \tau_{n,k} = n^2
-k^2-1$. Also,
\begin{equation}\label{eq:degtau}
\deg \tau_{n,k} = \prod_{i=0}^{k-1} \frac{\binom{n+i}k}{\binom{k+i}k}
\end{equation}
(Example~14.4.14 in~\cite{85k:14004}).

The subspace $L_S$ defines a projection $\pi_S: \Pbb^{n^2-1} \to \Pbb^{n^2-s-1}$. 
The closure of $\pi_S(\tau_{n,k})$ is an irreducible subvariety $\tau_{n,k,S}$ of 
$\Pbb^{n^2-s-1}$. Briefly stated, our goal is the computation of the degree of this 
subvariety. We begin by formulating this task more precisely.

\subsection{}\label{ss:dnrS}
Let $T_{n,k,S}$ be any variety mapping properly birationally to $\tau_{n,k}$ and 
resolving the indeterminacies of the restriction of $\pi_S$ to $\tau_{n,k}$:
\[
\xymatrix{
\Pbb^{n^2-1} & \tau_{n,k} \ar@{_(->}[l] \ar@{-->}[rr]^{{\pi_S}|_{\tau_{n,k}}} & & 
\tau_{n,k,S} \ar@{^(->}[r] & \Pbb^{n^2-s-1} \\
& & T_{n,k,S} \ar[ul] \ar@{->>}[ur]_{\tilde\pi_S}
}
\]
For example, we can take $T_{n,k,S}$ to be the blow-up of $\tau_{n,k}$ along 
the intersection $L_S\cap \tau_{n,k}$. We let $\tilde\pi_S$ be the induced regular 
map $T_{n,k,S} \to \tau_{n,k,S}$.
Note that the general fiber of $\tilde\pi_S$ is finite if and only if so is the fiber of 
$\tau_{n,k,S}$ (away from $L_S$), and $\deg\tilde\pi_S$ equals the number of 
points in the general fiber of ${\pi_S}|_{\tau_{n,k}}$ and not on $L_S$ in that case.

Let $h$ be the hyperplane class in $\Pbb^{n^2-s-1}$. 
\begin{defin}\label{def:dnrS}
We let
\[
d_{n,r,S}:=\int_{T_{n,k,S}} ({\tilde\pi_S}^*h)^{n^2-k^2-1}
\]
where $k=n-r$.
\end{defin}

\begin{lemma}
The number $d_{n,r,S}$ equals $f\cdot \deg \tau_{n,k,S}$, where $f$ is the degree of 
$\tilde\pi_S$ (and $f=0$ if the general fiber of $\tilde\pi_S$ has positive dimension).
In particular, $d_{n,r,S}$ is independent of the chosen variety $T_{n,k,S}$.
\end{lemma}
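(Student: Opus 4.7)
The plan is to obtain the formula directly from the projection formula for proper morphisms, together with the fact that $\tilde\pi_S$ is by construction proper and that $\dim T_{n,k,S} = \dim \tau_{n,k} = n^2-k^2-1$ (since $T_{n,k,S}\to \tau_{n,k}$ is proper birational).

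First I would apply the projection formula to $\tilde\pi_S: T_{n,k,S} \to \tau_{n,k,S}$ together with the class $h^{n^2-k^2-1}$ restricted to $\tau_{n,k,S}$, giving
\[
\int_{T_{n,k,S}} (\tilde\pi_S^*h)^{n^2-k^2-1} = \int_{\tau_{n,k,S}} h^{n^2-k^2-1}\cap (\tilde\pi_S)_*[T_{n,k,S}].
\]
Then I would split into two cases according to the dimension of the general fiber of $\tilde\pi_S$. If the general fiber has positive dimension, then $\dim \tau_{n,k,S} < n^2-k^2-1$, so the pushforward $(\tilde\pi_S)_*[T_{n,k,S}]$ vanishes and the integral is zero; this agrees with the claimed formula since in this case $f=0$ by definition. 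If the general fiber is finite, then $\dim \tau_{n,k,S}=n^2-k^2-1$, so $(\tilde\pi_S)_*[T_{n,k,S}] = f\cdot[\tau_{n,k,S}]$, and the right-hand side becomes $f\cdot\int h^{n^2-k^2-1}\cap[\tau_{n,k,S}] = f\cdot \deg\tau_{n,k,S}$, proving the identity.

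For the final assertion, independence of $T_{n,k,S}$ is automatic from the formula once we observe that both $\deg\tau_{n,k,S}$ and $f$ are intrinsically defined: $\tau_{n,k,S}$ is the closure of $\pi_S(\tau_{n,k})$, which does not involve a choice of resolution, and $f$ equals the number of preimages under ${\pi_S}|_{\tau_{n,k}}$ of a general point of $\tau_{n,k,S}$ lying outside $L_S$ (as noted in the paragraph preceding the definition), which likewise only depends on $\pi_S|_{\tau_{n,k}}$.

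There is no real obstacle here; the only mild subtlety is the bookkeeping in the positive-dimensional-fiber case, where one must check that the convention $f=0$ is consistent with the integral vanishing, and that $\dim T_{n,k,S}=\dim \tau_{n,k}$ holds for every admissible $T_{n,k,S}$ so that $[T_{n,k,S}]$ lives in the top dimension appearing in the projection formula. Both points are immediate from the requirement that $T_{n,k,S}\to\tau_{n,k}$ be proper and birational.
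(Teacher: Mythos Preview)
Your argument is correct and is essentially the same as the paper's: you spell out via the projection formula what the paper calls ``immediate from the definition,'' and your justification of independence (that $f$ equals the number of preimages of a general point away from $L_S$, hence is intrinsic) is exactly the paper's first argument for the second point. The paper also mentions an alternative route to independence---dominating any two resolutions by a common third and applying the projection formula between them---but this is not needed given your argument.
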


\begin{proof}
The first point is immediate from the definition, and the second point follows since
$f$ (if $\ne 0$) equals the number of points in a general fiber of $\pi_S|_{\tau_{n,k}}$
in the open set $\Pbb^{n^2-1}\smallsetminus L_S$. Alternately, the second point follows
from the projection formula, since two resolutions $T_{n,k,S}$ are dominated by
a third one.
\end{proof}

\begin{remark}\label{rem:perm}
The number $d_{n,r,S}$ is evidently invariant under permutations of the rows or columns 
of the matrix.
\qede\end{remark}

Our main goals in this note are to present a method for the computation of $d_{n,r,S}$
and to apply this method to several concrete cases. In principle the method can be applied
to more general situations than those that will be considered explicitly here, but the
complexity of the needed calculations increases rapidly.

\begin{remark}
When $\tilde\pi_S$ is birational onto its image $\tau_{n,k,s}$, then $d_{n,r,S}$ simply equals
the degree of $\tau_{n,k,S}$ as a subvariety of $\Pbb^{n^2-s-1}$. This degree also equals the
degree of the cone over $\tau_{n,k,S}$ with vertex $L_S$, that is, the {\em join\/} of $L_S$
and $\tau_{n,k}$. The language of joins is preferred in~\cite{arXiv1310.1362}, and is
essentially equivalent to the one used here.
\qede\end{remark}

\begin{remark}\label{rem:deq0}
There are of course situations in which $d_{n,r,S}$ does {\em not\/} equal the degree of
$\tau_{n,k,S}$. For example, if the general fiber of ${\pi_S}|_{\tau_{n,k}}$ is positive 
dimensional, then $\dim \tau_{n,k,S}<n^2-k^2-1$, and $d_{n,r,S}=0$.
This is necessarily the case if $s>k^2$, but can occur in more interesting situations.
For example, the general fiber of ${\pi_S}|_{\tau_{n,k}}$ is positive dimensional for~$S$ 
consisting of $s> k=n-r$ entries in one row:
\begin{center}
\begin{tikzpicture}
\path [fill=red] (0,-0) rectangle (1.16,-.5);
\path [fill=red] (1.34,-0) rectangle (2.5,-.5);
\draw (0,0) --(0,-1.25);
\draw (.5,0) --(.5,-1.25);
\draw (1,0) --(1,-1.25);
\draw (1.5,0) --(1.5,-1.25);
\draw (2,0) --(2,-1.25);
\draw (2.5,0) --(2.5,-1.25);
\draw [dashed] (1.25,0) --(1.25,-1.25);
\draw (0,0) --(1.16,0);
\draw (0,-.5) --(1.16,-.5);
\draw (0,-1) --(1.16,-1);
\draw (1.34,-0) --(2.75,-0);
\draw (1.34,-.5) --(2.75,-.5);
\draw (1.34,-1) --(2.75,-1);
	\draw [decorate,decoration={brace,amplitude=5pt},
			xshift=0pt, yshift=0pt]
			(0,.125) -- (2.5,0.125)
			node[black,midway,xshift=0pt,yshift=12.5pt]
				{$s>k$};
\end{tikzpicture}
\end{center}
This could be verified by geometric means, but it is an immediate consequence of the formulas
obtained in this note, since $d_{n,r,S}$ will turn out to vanish in this case 
(Example~\ref{ex:exarow}).
Thus, $d_{n,r,S}$ (as defined in Definition~\ref{def:dnrS}) carries interesting geometric 
information, and we choose to focus on it rather than on the degree of $\tau_{n,k,S}$ proper. 
\qede\end{remark}

\begin{remark}\label{rem:dto1}
It may also occur that the general fiber of $\tilde\pi_S$ has dimension zero, but degree larger
than $1$.
For example, our formulas will give that $d_{7,4,S}=2$ for the ladder configuration shown 
in Figure~\ref{fig:ladder}. In this case $s=9=k^2$, so the projection of $\sigma_{7,4}$ must 
be generically $2$-to-$1$ onto $\Pbb^{n^2-s-1}=\Pbb^{39}$. Again this information is 
immediately accessible from $d_{7,4,S}=2$ (and not from $\deg \tau_{7,4,S}=1$).
For an instance of the same phenomenon in which the projection is not dominant, 
see Example~\ref{ex:mixed}.

It would be interesting to have a criterion indicating when $\deg\tilde\pi_S=1$. According
to Lemma~5.4.1 in~\cite{arXiv1310.1362}, this is the case if e.g.~$S$ is a subset of
the diagonal and $s<(n-r)^2$. 

Even for diagonal $S$, the degree of $\deg\tilde\pi_S$ is not $1$ in general
if $s=(n-r)^2$: for example, it is $2$ for $r=2$, $n=s=2^2=4$ and $42$ for
$r=6$, $n=s=3^2=9$ (since $d_{4,2,S}=2$, $d_{9,6,S}=42$ if $S$ is the full
diagonal, see Example~\ref{ex:maxdia}).
In fact, numerical experimentation indicates that this degree is {\em independent of $n$\/} 
for $S$ consisting of $(n-r)^2$ diagonal elements, a fact for which we do not have a
proof.
\qede\end{remark}

\subsection{}\label{sec:excint}
We will treat the problem of computing $d_{n,r,S}$ as a problem of {\em excess intersection.\/} 
Let $N=\dim\tau_{n,k}=n^2-k^2-1$. The intersection of $N$ general hyperplanes $h_1,\dots, h_N$ 
in $\Pbb^{n^2-s-1}$ meets $\tau_{n,k,S}$ at reduced points by Bertini, and it follows 
that $d_{n,r,S}$ equals the number of points of intersection of $\tilde\pi_S^{-1}(h_1),\dots, 
\tilde\pi_S^{-1}(h_N)$. The hyperplanes $h_i$ correspond to general hyperplanes $H_i$ of 
$\Pbb^{n^2-1}$ {\em containing $L_S$,\/} and it follows that the intersection
\[
H_1\cap \cdots \cap H_N\cap \tau_{n,k}
\]
consists of $d_{n,r,S}$ isolated reduced points and of a scheme supported on $L_S\cap
\tau_{n,k}$. The intersection number
\begin{equation}\label{eq:basicint}
\int H_1\cdot \cdots \cdot H_N\cdot \tau_{n,k}
\end{equation}
equals $\deg \tau_{n,k}$ (given in~\eqref{eq:degtau}); therefore
\[
d_{n,r,S} = \prod_{i=0}^{k-1} \frac{\binom{n+i}k}{\binom{k+i}k} - \text{contribution
of $(L_S\cap \tau_{n,k})$ to the intersection product~\eqref{eq:basicint}.}
\]
The challenge consists therefore of evaluating the contribution of the ({\em a priori\/}
positive dimensional) locus $L_S\cap \tau_{n,k}$ to the intersection product of $\tau_{n,k}$
and $N$ general hyperplanes containing $L_S$. By Fulton-MacPherson intersection theory,
this contribution may be expressed in terms of a Segre class.

\begin{lemma}\label{lem:FMred}
Let $H$ be the class of a hyperplane in $\Pbb^{n^2-1}$. Then
\begin{equation}\label{eq:firstred}
d_{n,r,S} = \prod_{i=0}^{k-1} \frac{\binom{n+i}k}{\binom{k+i}k} 
-\int (1+H)^{n^2-k^2-1}\cap s(L_S\cap \tau_{n,k},\tau_{n,k})\quad.
\end{equation}
\end{lemma}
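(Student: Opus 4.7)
The plan is to apply Fulton--MacPherson's residual intersection formula (Chapter~6 of~\cite{85k:14004}) to the complete intersection of $N:=n^2-k^2-1$ general hyperplanes through $L_S$ with $\tau_{n,k}$. The discussion in \S\ref{sec:excint} already identifies $d_{n,r,S}$ as the number of isolated reduced points of $H_1\cap\cdots\cap H_N\cap \tau_{n,k}$ away from $L_S$, and observes that the total intersection number on $\Pbb^{n^2-1}$ is $\int H^N\cap [\tau_{n,k}]=\deg\tau_{n,k}$, which equals the product in the statement by~\eqref{eq:degtau}. What remains is to extract the excess contribution of the (typically positive-dimensional) distinguished component $L_S\cap\tau_{n,k}$ and recognize it as the Segre-class integral on the right-hand side of~\eqref{eq:firstred}.

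For this extraction I would view $V:=H_1\cap\cdots\cap H_N$ as a regular embedding of codimension $N$ in $\Pbb^{n^2-1}$ with normal bundle $\cO(1)^{\oplus N}|_V$, and apply the distinguished-variety decomposition of~\cite[\S6.1]{85k:14004}: the refined intersection $V\cdot[\tau_{n,k}]$ is a zero-cycle supported on $V\cap\tau_{n,k}$ whose pushforward to a point is $\deg\tau_{n,k}$. For a generic choice of the $H_i$ containing $L_S$, Bertini ensures that the only distinguished subvarieties are (i) $L_S\cap\tau_{n,k}$, carrying the excess, and (ii) a finite reduced zero-cycle of length $d_{n,r,S}$ on $\tau_{n,k}\smallsetminus L_S$. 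The contribution of (i) is the degree-zero component of
\[
c\bigl(\cO(1)^{\oplus N}|_{L_S\cap\tau_{n,k}}\bigr)\cap s(L_S\cap\tau_{n,k},\tau_{n,k})=(1+H)^{N}\cap s(L_S\cap\tau_{n,k},\tau_{n,k}),
\]
while the contribution of (ii) is simply $d_{n,r,S}$. Equating the sum of the two with $\deg\tau_{n,k}$ and solving for $d_{n,r,S}$ yields~\eqref{eq:firstred}.

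The main, though essentially routine, obstacle will be justifying the Bertini-type step: that for a generic $N$-tuple of hyperplanes through $L_S$ there are no extraneous distinguished components beyond $L_S\cap\tau_{n,k}$ and the reduced $0$-cycle described above, and that (ii) truly does appear with multiplicity one. This rests on the irreducibility of $\tau_{n,k}$, on $L_S\cap\tau_{n,k}$ being a proper subscheme of $\tau_{n,k}$, and on the standard transversality of general linear sections away from the base locus, all of which are already implicit in the discussion of \S\ref{sec:excint}. Once that is in hand, the decomposition is immediate and the lemma follows.
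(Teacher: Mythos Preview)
Your argument is correct and is essentially the same as the paper's: the paper simply cites Proposition~4.4 in~\cite{85k:14004}, which packages precisely the excess-contribution computation you spell out (base scheme $L_S\cap\tau_{n,k}$ of the linear system of hyperplanes through $L_S$, contribution given by $\{(1+H)^N\cap s(L_S\cap\tau_{n,k},\tau_{n,k})\}_0$). Your Chapter~6 route just unpacks that proposition; the Bertini step you flag is exactly the genericity hypothesis built into the discussion of~\S\ref{sec:excint}.
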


Here $s(L_S\cap \tau_{n,k},\tau_{n,k})$ is the {\em Segre class\/} in the sense of
\cite{85k:14004}, Chapter 4. In~\eqref{eq:firstred}, $L_S\cap \tau_{n,k}$ is taken with its 
natural (not necessarily reduced!) scheme structure, i.e., the one given by the sum of 
the ideals of $L_S$ and $\tau_{n,k}$.

\begin{proof}
Notice that $L_S\cap \tau_{n,k}$ is the base scheme of the linear system cut out on
$\tau_{n,k}$ by the hyperplanes containing $L_S$. Then apply Proposition~4.4 
in~\cite{85k:14004}.
\end{proof}

The problem shifts then to the computation of the Segre class $s(L_S\cap \tau_{n,k},
\tau_{n,k})$. 

\subsection{}
In order to obtain more manageable formulas, we will use Lemma~\ref{lem:FMred} in
a slightly modified version, `twisting' the needed Segre class by means of an operation
introduced in~\cite{MR96d:14004}.

\begin{defin}\label{def:twisSc}
Let $Z$ be a subscheme of a variety $V$, and let $\cL$ be a line bundle on $Z$.
We let
\begin{equation}\label{eq:twisted}
s(Z,V)^\cL := c(\cL)^{-1}\cap (s(Z,V)\otimes_V \cL)\quad.
\end{equation}
\end{defin}

The notation used in the right-hand side of~\eqref{eq:twisted} is defined 
in~\S2 of~\cite{MR96d:14004}, to which
we address the reader for further details (which will be immaterial here). In practice,
$s(Z,V)^{\cL}$ is obtained by capping by $c(\cL)^{-(c+1)}$ the piece of $s(Z,V)$ of
codimension~$c$ {\em in $V$,\/} for all $c$.

\begin{remark}
The class introduced in Definition~\ref{def:twisSc} is closely related to the {\em twisted
Segre class\/} studied in~\cite{MR1393259}.
\qede\end{remark}

We collect here a few facts concerning this notion, that we will need in the next sections.

\begin{lemma}\label{lem:twifacts}
\begin{enumerate}
\item\label{pt1} If $Z$ is regularly embedded in $V$, then
\[
s(Z,V)^{\cL}=c(\cL)^{-1} c(N_ZV\otimes \cL)^{-1}\cap [Z]\quad.
\]
\item\label{pt2} If $\pi: \Til V\to V$ is a proper birational morphism, and $\Til \cL$ is the 
pull-back to $\pi^{-1}(Z)$ of the line bundle $\cL$, then
\[
s(Z,V)^\cL = \pi_* \left(s(\pi^{-1}(Z), \Til V)^{\Til \cL}\right)\quad.
\]
\item\label{pt3} If $V$ is nonsingular, then the class
\[
c(TV|_Z\otimes \cL)\cap s(Z,V)^\cL
\]
only depends on $Z$ and $\cL$.
\item\label{pt4} With notation as in~\S\ref{sec:excint},
\[
d_{n,r,S} = \prod_{i=0}^{k-1} \frac{\binom{n+i}k}{\binom{k+i}k} 
-\int s(L_S\cap \tau_{n,k},\tau_{n,k})^{\cO(-H)}\quad.
\]
\end{enumerate}
\end{lemma}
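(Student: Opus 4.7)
The four assertions are largely independent. Each reduces the twisted class $s(Z,V)^{\cL}$ to a known untwisted ingredient (a normal-bundle Chern class, an ordinary Segre class pushforward, a Fulton class, or the basic intersection on $\tau_{n,k}$ from Lemma~\ref{lem:FMred}), combined with standard identities describing how tensor products by a line bundle interact with Chern classes and with the codimension-indexed twist $\bullet \otimes_V \cL$ from~\cite{MR96d:14004}. I would address the four parts in the order stated.

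For part~(1), I start from Fulton's normal-bundle formula $s(Z,V) = c(N_ZV)^{-1}\cap [Z]$, substitute into the definition~(\ref{eq:twisted}), and simplify using the standard identity expressing $c(\cE\otimes \cL)$ in terms of $c(\cE)$ and $c_1(\cL)$, together with the explicit codimension-by-codimension action of $\otimes_V \cL$ recalled just below Definition~\ref{def:twisSc}. The bookkeeping is routine and produces the stated product formula. For part~(2), I would combine the birational invariance $\pi_*\, s(\pi^{-1}(Z),\Til V) = s(Z,V)$ (Proposition~4.2 in~\cite{85k:14004}) with the fact that $\bullet \otimes_V \cL$ commutes with proper pushforward when the line bundle is a pullback; this commutation is a direct consequence of the projection formula applied codimension by codimension, since a proper pushforward preserves codimension. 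Pulling the $c(\cL)^{-1}$ factor out of the pushforward (again by the projection formula) then yields the claimed equality.

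The main obstacle is part~(3). The natural route is to lean on Fulton's intrinsic characterization: when $V$ is nonsingular, the class $c(TV|_Z)\cap s(Z,V)$ depends only on $Z$ (Example~4.2.6 in~\cite{85k:14004}). I would aim to rewrite the twisted expression $c(TV|_Z\otimes \cL)\cap s(Z,V)^\cL$ as a universal polynomial in the intrinsic class $c(TV|_Z)\cap s(Z,V)$ and $c_1(\cL)$, from which independence of $V$ follows. The difficulty lies in the bookkeeping: expanding $c(TV|_Z\otimes \cL)$ via the usual Chern-class-of-tensor-product formula and tracking its interaction with $s(Z,V)^\cL$ requires care, because classes in different codimensions are weighted differently by $\otimes_V \cL$ and by $c(\cE\otimes \cL)$, and one must verify that every factor involving the ambient dimension cancels. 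As a fallback, two embeddings $Z\hookrightarrow V_1$, $Z\hookrightarrow V_2$ can be compared through the diagonal $Z\hookrightarrow V_1\times V_2$, applying the pushforward property of part~(2) to either projection.

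Finally, part~(4) is a direct substitution into Lemma~\ref{lem:FMred}. Writing $N = n^2 - k^2 - 1$ and decomposing $s(L_S\cap \tau_{n,k},\tau_{n,k}) = \sum_c s^{(c)}$ by codimension in $\tau_{n,k}$, the degree-zero piece of $(1+H)^N \cap s^{(c)}$ extracts the coefficient of $H^{N-c}$ in $(1+H)^N$, namely $\binom{N}{c}$, so that $\int (1+H)^N\cap s = \sum_c \binom{N}{c}\int H^{N-c}\cdot s^{(c)}$. On the other side, the codimension-$c$ component of $s(L_S\cap\tau_{n,k},\tau_{n,k})^{\cO(-H)}$ is $(1-H)^{-(c+1)}\cap s^{(c)}$, and its degree-zero piece is governed by $[H^{N-c}](1-H)^{-(c+1)}=\binom{N}{c}$ via the expansion $(1-H)^{-(c+1)} = \sum_j \binom{c+j}{c} H^j$. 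The two totals agree, converting the formula of Lemma~\ref{lem:FMred} into the one asserted in part~(4).
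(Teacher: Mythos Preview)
Your proposal is correct and follows essentially the same route as the paper. The paper itself only sketches the argument---it defers full proofs elsewhere and points to the same ingredients you use: the normal-bundle formula and Definition~\ref{def:twisSc} for~(1), birational invariance of Segre classes for~(2), Example~4.2.6 in~\cite{85k:14004} for~(3), and Lemma~\ref{lem:FMred} for~(4). Your treatment of~(4), matching the binomial coefficients $\binom{N}{c}$ arising from $(1+H)^N$ with those from $(1-H)^{-(c+1)}$, is the correct unpacking of what the paper calls ``a formal consequence of the definition.'' Your remark in~(2) that pushforward preserves codimension is justified here because $\pi$ is birational, so $\dim \Til V=\dim V$ and the codimension indexing in $\otimes_V\cL$ is compatible on both sides.
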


Full proofs of these facts will be given elsewhere.
\eqref{pt1}, \eqref{pt2} are formal consequence of the definition, the standard formula for 
Segre classes of regularly embedded schemes (see Chapter~4 of~\cite{85k:14004}), and 
of the birational invariance of Segre classes (Proposition~4.2 (a) in~\cite{85k:14004}). 
\eqref{pt3} follows from Example~4.2.6 (a) in~\cite{85k:14004}. (4) is again a formal 
consequence of the definition and of Lemma~\ref{lem:FMred}.

By \eqref{pt4}, the problem of computing $d_{n,r,S}$ is reduced to the computation
of the class $s(L_S\cap \tau_{n,k},\tau_{n,k})^{\cO(-H)}$. We will use 
\eqref{pt2} to relate this computation to one in an ambient nonsingular variety and 
\eqref{pt3} to change this ambient nonsingular variety to a more convenient one. 
In particularly simple situations, \eqref{pt1} will suffice to compute the needed class. 

We will need somewhat more sophisticated facts concerning $s(Z,V)^\cL$ 
in~\S\ref{ss:conessplayed} (Lemma~\ref{lem:scone}, Lemma~\ref{lem:splay}).

\subsection{}\label{ss:resolution}
By Lemma~\ref{lem:twifacts}~\eqref{pt4}, our objective consists of the computation of
$s(L_S\cap \tau_{n,k},\tau_{n,k})^{\cO(-H)}$; by~\eqref{pt2} in the same lemma, we
may replace $\tau_{n,k}$ with any birational model $\htau_{n,k}$. We choose for
$\htau_{n,k}$ the standard resolution of $\tau_{n,k}$ as a projective bundle over 
a Grassmannian.

Briefly, $\htau_{n,k}$ consists of pairs $(K,\varphi)\in G(k,n)\times \Pbb^{n^2-1}$
where $K\in G(k,n)$ is a $k$-dimensional subspace of $\kappa^n$, and $\varphi$ is
a matrix such that $K\subseteq \ker \varphi$ (and hence $\rk\varphi\le r=n-k$). 
More intrinsically, consider the tautological sequence
\[
\xymatrix{
0 \ar[r] & \cS \ar[r] & \cO^n \ar[r] & \cQ \ar[r] & 0
}
\]
over the Grassmannian $G(k,n)$. 
The space $\Hom(\kappa^n,\kappa^n)$ of matrices defines the trivial bundle $\Hom(\cO^n,
\cO^n)$ over $G(k,n)$. For $K\in G(k,n)$, the homomorphisms whose kernel 
contains $K$ correspond to those $\varphi: \kappa^n \to \kappa^n$ that are induced from a 
homomorphism on the quotient $\kappa^n/K$. Globalizing, the $\Pbb^{n(n-k)-1}$-bundle 
$\htau_{n,k}$ on $G(k,n)$ whose fiber over $K$ parametrizes matrices $\varphi$ for
which $K\subseteq \ker\varphi$ is the projectivization of
\[
\Hom(\cQ,\cO^n)\cong (\cQd)^n\quad.
\]
The embedding $\cQd \subseteq \cO^n$ induces an inclusion
\[
\htau_{n,k} \subseteq \Pbb\cO^{n^2} = G(k,n) \times \Pbb^{n^2-1}\quad;
\]
the projection to $\Pbb^{n^2-1}$ maps $\htau_{n,k}$ onto $\tau_{n,k}$:
\[
\xymatrix@C=10pt{
& \htau_{n,k} \ar[ld]_\rho \ar[rd]^\nu \ar@{^(->}[r] & G(k,n)\times \Pbb^{n^2-1} \ar[dr] \\
G(k,n) & & \tau_{n,k} \ar@{^(->}[r] & \Pbb^{n^2-1}
}
\]
Both $\tau_{n,k}$ and $\htau_{n,k}$ are projective, and every $\varphi\in\tau_{n,k}$ 
with rank {\em exactly\/} $r=n-k$ has a well-defined kernel $K\in G(k,n)$; thus $\nu:
\htau_{n,k} \to \tau_{n,k}$ is a proper birational morphism, resolving the singularities 
of $\tau_{n,k}=\sigma_{n,r}$.
(In particular $\dim \tau_{n,k}=\dim G(k,n)+n(n-k)-1=k(n-k)+n(n-k)-1=n^2-k^2-1$ 
as stated in~\S\ref{ss:intropre}.)

As in the diagram, we let $\rho: \htau_{n,k} = \Pbb((\cQd)^n) \to G(k,n)$ be the
structure map. Note that the universal bundle $\cO(1)$ on $\Pbb((\cQd)^n)$ is 
the restriction of the hyperplane bundle $\cO(H)$ from the second factor in
$G(k,n)\times \Pbb^{n^2-1}$.

The following observation will be useful in concrete calculations:
\begin{lemma}\label{lem:degcom}
\[
\deg\sigma_{n,r}=\deg \tau_{n,k} = \prod_{i=0}^{k-1} \frac{\binom{n+i}k}{\binom{k+i}k} = \int_{G(k,n)}
c(\cSd)^n\quad.
\]
\end{lemma}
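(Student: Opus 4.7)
The first equality $\deg\sigma_{n,r}=\deg\tau_{n,k}$ is just the notational substitution $r=n-k$, and the middle equality was already recorded as~\eqref{eq:degtau}, quoted from Example~14.4.14 of~\cite{85k:14004}. Only the identification $\deg\tau_{n,k}=\int_{G(k,n)}c(\cSd)^n$ needs an argument, and I would carry it out via the resolution $\nu:\htau_{n,k}\to\tau_{n,k}$ described in~\S\ref{ss:resolution}.

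Since $\nu$ is proper and birational, the projection formula gives
\[
\deg\tau_{n,k} \;=\; \int_{\htau_{n,k}} (\nu^*H)^{n^2-k^2-1},
\]
where $H$ is the hyperplane class of $\Pbb^{n^2-1}$, and the excerpt already identifies $\nu^*H$ with the tautological class $c_1(\cO(1))$ on $\Pbb((\cQd)^n)$. The projective bundle $\rho:\Pbb((\cQd)^n)\to G(k,n)$ sits in Fulton's line convention (the fibre parametrizes homomorphisms $\cQ\to\cO^n$ up to scalar, i.e.\ lines in $(\cQd)^n$), and $(\cQd)^n$ has rank $n(n-k)$. Thus I would apply the standard pushforward formula
\[
\rho_*\bigl(c_1(\cO(1))^{n(n-k)-1+j}\bigr)=s_j\bigl((\cQd)^n\bigr)
\]
with $j=k(n-k)=\dim G(k,n)$, which gives
\[
\deg\tau_{n,k} \;=\; \int_{G(k,n)} s_{k(n-k)}\bigl((\cQd)^n\bigr).
\]

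It then remains to rewrite the right-hand side in terms of $\cSd$. Whitney multiplicativity of Segre classes yields $s((\cQd)^n)=s(\cQd)^n=c(\cQd)^{-n}$, and dualizing the tautological sequence $0\to\cS\to\cO^n\to\cQ\to 0$ produces $c(\cQd)\,c(\cSd)=1$, so that $c(\cQd)^{-n}=c(\cSd)^n$. Extracting the top-degree component (of dimension $k(n-k)$, which matches $\dim G(k,n)$, so that the integral automatically picks out only this piece) delivers the stated formula. The only point requiring care is the matching between the projective-bundle convention on $\Pbb((\cQd)^n)$ and the shift $n(n-k)-1+j$ in Fulton's Segre pushforward; beyond this bookkeeping, I do not anticipate any essential obstacle.
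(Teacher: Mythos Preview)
Your proposal is correct and follows essentially the same route as the paper: pull back via the birational resolution $\nu$, then push forward along the projective bundle $\rho:\Pbb((\cQd)^n)\to G(k,n)$ to produce the Segre class $s((\cQd)^n)$, and finally use the dual tautological sequence to rewrite this as $c(\cSd)^n$. The only cosmetic difference is that the paper works with the total class $\sum_i c_1(\cO(1))^i$ (letting the integral pick out the top piece) rather than isolating the single power $c_1(\cO(1))^{n^2-k^2-1}$ as you do.
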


\begin{proof}
The numerical value for the degree was already recalled in~\S\ref{ss:intropre}. To see
that this is the degree of $c(\cSd)^n$, we may argue as follows. By definition,
\[
\deg\tau_{n,k}=\int H^{\dim \tau_{n,k}} \cap [\tau_{n,k}]
=\int \sum_i H^i \cap [\tau_{n,k}]\quad;
\]
by the projection formula (applied to $\nu$), this equals
\[
\int \sum_i c_1(\cO(1))^i \cap [\Pbb((\cQd)^n)]\quad,
\]
since as observed above $H$ pulls back to $c_1(\cO(1))$ on $\htau_{n,k}
=\Pbb((\cQd)^n)$. Since degrees are preserved by proper push-forwards,
the degree in turn equals
\[
\int \rho_*\left(\sum_i c_1(\cO(1))^i \cap [\Pbb((\cQd)^n)]\right)
=\int s((\cQd)^n)\cap [G(k,n)]
\]
by definition of Segre class of a vector bundle. By the Whitney formula applied to the 
(dual of the) tautological sequence we have
$s(\cQd)=c(\cSd)$, and this concludes the proof.
\end{proof}

\subsection{}
Applying Lemma~\ref{lem:twifacts}~\eqref{pt2} and~\eqref{pt4}, we have
\begin{equation}\label{eq:predeg}
d_{n,r,S} = \prod_{i=0}^{k-1} \frac{\binom{n+i}k}{\binom{k+i}k} 
-\int s(\nu^{-1}(L_S\cap \tau_{n,k}), \htau_{n,k})^{\cO(-H)}
\end{equation}
where of course $H$ denotes the pull-back of the hyperplane class to $\htau_{n,k}$.

Now let $\hL_S=G(k,n)\times L_S\cong G(k,n)\times \Pbb^{s-1}$, and note that
$\nu^{-1}(L_S\cap \tau_{n,k}) = \hL_S\cap \htau_{n,k}$.
\[
\xymatrix@C=20pt{
& \htau_{n,k} \ar[ld]_\rho \ar[rd]^\nu \ar@{^(->}[r] & G(k,n)\times \Pbb^{n^2-1} \ar[dr] 
& \hL_S = G(k,n)\times L_S \ar@{_(->}[l] \ar[dr] \\
G(k,n) & & \tau_{n,k} \ar@{^(->}[r] & \Pbb^{n^2-1} & L_S \ar@{_(->}[l]
}
\]
Denote by $\pi_1: \hL_S \to G(k,n)$ the projection. We note that the push-forward
$\pi_{1*}$ acts by reading off the coefficient of $H^{s-1}$.
The information needed in order
to compute the degrees $d_{n,r,S}$ is captured by the following class in the Chow
group of the Grassmannian $G(k,n)$.

\begin{defin}\label{def:GrassS}
The {\em Grassmann class\/} $\Sigma_{n,r,S}$ associated with $S$ (w.r.t.~$r$) is the 
push-forward
\[
\Sigma_{n,r,S}:=\pi_{1*}\left( s(\hL_S\cap \htau_{n,k},\hL_S)^{\cO(-H)} \right)
\]
of the `twisted' Segre class $s(\hL_S\cap \htau_{n,k},\hL_S)^{\cO(-H)}$, where $k=n-r$.
\end{defin}

Note that we are taking the Segre class {\em in $\hL_S$\/} in this definition (rather than
{\em in $\htau_{n,k}$}).
The following result will be our main tool in the rest of the paper.

\begin{theorem}\label{thm:mainred}
With notation as above,
\begin{equation}\label{eq:mainred}
d_{n,r,S} = \int_{G(n-r,n)} c(\cSd)^n\cap \left(1-\Sigma_{n,r,S}\right)\quad.
\end{equation}
\end{theorem}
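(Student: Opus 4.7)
The starting point is equation~\eqref{eq:predeg}, derived from parts~\eqref{pt2} and~\eqref{pt4} of Lemma~\ref{lem:twifacts}. It expresses $d_{n,r,S}$ as the difference between $\deg\tau_{n,k}$ and the integral of $s(Z,\htau_{n,k})^{\cO(-H)}$, where $Z=\nu^{-1}(L_S\cap\tau_{n,k})=\hL_S\cap\htau_{n,k}$. The plan is to transfer this twisted Segre class from the ambient $\htau_{n,k}$ to the ambient $\hL_S$, in which its pushforward to $G(k,n)$ is by definition $\Sigma_{n,r,S}$.

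The first move is to apply Lemma~\ref{lem:twifacts}\eqref{pt3}. Both $\htau_{n,k}$ (a projective bundle over a Grassmannian) and $\hL_S=G(k,n)\times L_S$ are nonsingular and contain $Z$, so the class $c(TV|_Z\otimes\cO(-H))\cap s(Z,V)^{\cO(-H)}$ in $A_*Z$ is the same for both choices of $V$. Since $c(T\htau_{n,k}|_Z\otimes\cO(-H))$ has leading term $1$ and is therefore invertible as a Chern operator on $A_*Z$, one can rearrange to get
\[
s(Z,\htau_{n,k})^{\cO(-H)} = \frac{c(T\hL_S|_Z\otimes\cO(-H))}{c(T\htau_{n,k}|_Z\otimes\cO(-H))}\cap s(Z,\hL_S)^{\cO(-H)}\quad\text{in } A_*Z.
\]

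The main computational step is evaluating this Chern class ratio. The inputs are: the Euler sequence $0\to\cO\to\cO(H)^{\oplus s}\to TL_S\to 0$ on $L_S=\Pbb^{s-1}$; the splitting $T\hL_S=\pi_1^*TG(k,n)\oplus\pi_2^*TL_S$; the relative Euler sequence $0\to\cO\to\rho^*(\cQd)^n\otimes\cO(H)\to T_{\htau_{n,k}/G(k,n)}\to 0$ for $\htau_{n,k}=\Pbb((\cQd)^n)$, combined with the tangent sequence $0\to T_{\htau_{n,k}/G(k,n)}\to T\htau_{n,k}\to\rho^*TG(k,n)\to 0$; and the Whitney relation $c(\cSd)c(\cQd)=1$. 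Tensoring the two Euler sequences by $\cO(-H)$ yields a common factor $1/(1-H)$ in numerator and denominator; on $Z$ the pullbacks $\rho^*TG(k,n)$ and $\pi_1^*TG(k,n)$ agree and cancel; what remains is $1/c(\rho^*\cQd|_Z)^n = c(\cSd|_Z)^n$. I expect this Chern-class bookkeeping, and the proper use of the twist, to be the main subtlety of the proof.

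To conclude, set $\alpha=s(Z,\hL_S)^{\cO(-H)}\in A_*Z$ and apply the projection formula to $\pi_1:\hL_S\to G(k,n)$, using $c(\pi_1^*\cSd|_Z)=c(\cSd|_Z)$:
\[
\int s(Z,\htau_{n,k})^{\cO(-H)}
=\int c(\cSd|_Z)^n\cap\alpha
=\int_{G(k,n)} c(\cSd)^n\cap\pi_{1*}\alpha
=\int_{G(k,n)} c(\cSd)^n\cap\Sigma_{n,r,S}.
\]
Substituting into~\eqref{eq:predeg} and using Lemma~\ref{lem:degcom} to rewrite $\prod_{i=0}^{k-1}\binom{n+i}{k}/\binom{k+i}{k}$ as $\int_{G(k,n)}c(\cSd)^n$ produces $d_{n,r,S}=\int_{G(n-r,n)} c(\cSd)^n\cap(1-\Sigma_{n,r,S})$, as claimed.
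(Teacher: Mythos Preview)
Your proof is correct and follows essentially the same route as the paper's: invoke Lemma~\ref{lem:twifacts}\eqref{pt3} to pass from $s(Z,\htau_{n,k})^{\cO(-H)}$ to $s(Z,\hL_S)^{\cO(-H)}$, compute the Chern-class ratio via the (relative) Euler sequences for $\hL_S$ and $\htau_{n,k}$ over $G(k,n)$ (cancelling the $TG(k,n)$ and $1/(1-H)$ factors to obtain $c(\cSd)^n$), then push forward to $G(k,n)$ and identify the first term via Lemma~\ref{lem:degcom}. The only cosmetic difference is that the paper writes the Euler sequences in one four-term form already incorporating $TG(k,n)$, whereas you separate the relative tangent sequence from the fiberwise Euler sequence.
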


\begin{proof}
By Lemma~\ref{lem:twifacts}~\eqref{pt3},
\[
s(\hL_S\cap \htau_{n,k}, \htau_{n,k})^{\cO(-H)}
=\frac{c(T\hL_S\otimes \cO(-H))}{c(T\htau_{n,k}\otimes \cO(-H))}
\cap s(\hL_S\cap \htau_{n,k}, \hL_S)^{\cO(-H)}
\]
The Chern classes of both bundles appearing on the right are evaluated by standard 
Euler sequences. Since
$\hL_S$ is a trivial $\Pbb^{s-1}$-bundle over $G(k,n)$, we have the exact sequence 
\[
\xymatrix{
0 \ar[r] & \cO \ar[r] & \cO(H)^s \ar[r] & T\hL_S \ar[r] & TG(k,n) \ar[r] & 0
}
\]
(omitting the evident pull-back). Tensoring by $\cO(-H)$ we get the exact sequence
\[
\xymatrix{
0 \ar[r] & \cO(-H) \ar[r] & \cO^s \ar[r] & T\hL_S\otimes \cO(-H) \ar[r] & TG(k,n)\otimes
\cO(-H) \ar[r] & 0
}
\]
from which
\[
c(T\hL_S\otimes \cO(-H))=c(\cO(H))^{-1} c(TG(k,n)\otimes \cO(-H))\quad.
\]
Tensoring the Euler sequence for $\Pbb((\cQd)^n)$ by $\cO(-H)$ (and omitting
pull-backs and restrictions by a common abuse of language) gives likewise
\[
\xymatrix{
0 \ar[r] & \cO(-H) \ar[r] & (\cQd)^n \ar[r] & T\htau_{n,k}\otimes \cO(-H) \ar[r] & 
TG(k,n)\otimes \cO(-H) \ar[r] & 0
}
\]
from which
\[
c(T\htau_{n,k}\otimes \cO(-H))=c(\cO(H))^{-1} c(\cQd)^n c(TG(k,n)\otimes \cO(-H))\quad.
\]
Therefore
\[
\frac{c(T\hL_S\otimes \cO(-H))}{c(T\htau_{n,k}\otimes \cO(-H))}
=\frac{1}{c(\cQd)^n} = c(\cSd)^n\quad,
\]
and \eqref{eq:predeg} gives
\[
d_{n,r,S} = \prod_{i=0}^{k-1} \frac{\binom{n+i}k}{\binom{k+i}k} 
-\int_{\hL_S} c(\cSd)^n \cap s(\hL_S\cap \htau_{n,k}, \hL_S)^{\cO(-H)}
\]
Since $\int$ is not affected by proper push-forwards,
\[
d_{n,r,S} = \prod_{i=0}^{k-1} \frac{\binom{n+i}k}{\binom{k+i}k} 
-\int_{G(k,n)} c(\cSd)^n \cap \pi_{1*}\left(s(\hL_S\cap \htau_{n,k}, \hL_S)^{\cO(-H)}\right)
\]
and by Lemma~\ref{lem:degcom} we then have
\[
d_{n,r,S} = \int_{G(k,n)} c(\cSd)^n \left(1 - \pi_{1*}\left( s(\hL_S\cap \htau_{n,k}, 
\hL_S)^{\cO(-H)}\right)\right)\quad,
\]
which is the statement.
\end{proof}

In view of expression~\eqref{eq:mainred} obtained in 
Theorem~\ref{thm:mainred}, the key ingredient in the calculation of $d_{n,r,S}$ in
our approach is
the computation of the Grassmann class $\Sigma_{n,r,S}$ determined by the choice
of $n,r,S$. In the rest of the paper we will compute this class explicitly in several 
template cases, and
prove a result relating $\Sigma_{n,r,S}$ to the Grassmann classes of smaller sets,
provided these form `blocks' for $S$. Once $\Sigma_{n,r,S}$ is obtained, tools
such as Schubert2 can perform the degree computation required by~\eqref{eq:mainred}
and obtain numerical values for $d_{n,r,S}$ in many cases of reasonable size. In some cases
(particularly when $S$ is `diagonal') we will be able to perform the degree computation
explicitly in general.


\section{Entries from a single row or column}\label{singlerow}

\subsection{}\label{ss:singlerowlocus}
Let $S$ consist of $s=\ell$ entries in a single row of the matrix. By Remark~\ref{rem:perm},
we could choose these entries to be adjacent and left-adjusted in the top row of the matrix. 
\begin{center}
\begin{tikzpicture}
\path [fill=red] (0,-0) rectangle (1.16,-.5);
\path [fill=red] (1.34,-0) rectangle (2.5,-.5);
\draw (0,0) --(0,-1.25);
\draw (.5,0) --(.5,-1.25);
\draw (1,0) --(1,-1.25);
\draw (1.5,0) --(1.5,-1.25);
\draw (2,0) --(2,-1.25);
\draw (2.5,0) --(2.5,-1.25);
\draw (3,0) --(3,-1.25);
\draw (3.5,0) --(3.5,-1.25);
\draw [dashed] (1.25,0) --(1.25,-1.25);
\draw (0,0) --(1.16,0);
\draw (0,-.5) --(1.16,-.5);
\draw (0,-1) --(1.16,-1);
\draw (1.34,-0) --(3.75,-0);
\draw (1.34,-.5) --(3.75,-.5);
\draw (1.34,-1) --(3.75,-1);
	\draw [decorate,decoration={brace,amplitude=5pt},
			xshift=0pt, yshift=0pt]
			(0,.125) -- (2.5,0.125)
			node[black,midway,xshift=0pt,yshift=12.5pt]
				{$\ell$};
\end{tikzpicture}
\end{center}
(We denote $s$ by $\ell$ in this section, for compatibility with notation used in later sections.)
Recall that $\hL_S$ denotes 
$G(k,n)\times \Pbb^{\ell-1}$; we need to compute $s(\hL_S\cap \htau_{n,k},\hL_S)^{\cO(-H)}$, 
where $\htau_{n,k}$ is the standard resolution of $\tau_{n,k}=\sigma_{n,r}$, the variety of 
matrices with rank $\le r=n-k$ (see~\S\ref{ss:resolution}). The kernel of a matrix in the span 
of $S$,
\[
\varphi=\begin{pmatrix}
a_{11} & \cdots & a_{1\ell} & 0 & \cdots & 0 \\
0 & \cdots & 0 & 0 & \cdots & 0 \\
\vdots & \ddots & \vdots & \vdots & \ddots & \vdots \\
0 & \cdots & 0 & 0 & \cdots & 0
\end{pmatrix}\quad,
\]
is the hyperplane $T_\varphi$ with equation $a_{11}x_1+\cdots +a_{1\ell}x_\ell=0$. By definition 
of $\htau_{k,n}$ (see~\S\ref{ss:resolution}), the scheme $\hL_S\cap \htau_{k,n}$ is supported 
on the Grassmann bundle over $\Pbb^{\ell-1}$ whose fiber over $\varphi$ is the set of
$k$-planes $K$ contained in $T_\varphi$.

\begin{lemma}\label{lem:onerowdes}
The intersection $\hL_S\cap \htau_{k,n}$ equals this Grassmann bundle scheme-theoretically;
in particular, it is a nonsingular subvariety of $\hL_S$. It is the zero-scheme of a regular
section of $\cSd\otimes \cO(H)$ over $\hL_S$.
\end{lemma}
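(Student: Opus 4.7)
The plan is to write down a natural section $\sigma$ of $\cSd\otimes\cO(H)$ over $\hL_S$ whose zero scheme matches $\hL_S\cap\htau_{n,k}$ scheme-theoretically, and then to deduce regularity and smoothness from an explicit codimension count. On $\Pbb^{\ell-1}=L_S$, the entries $x_1,\dots,x_\ell$ of a matrix $\varphi\in L_S$ are the homogeneous coordinates, and the first row of $\varphi$ defines a linear form $f\colon\cO^n\to\cO(H)$ on $\hL_S$ whose first $\ell$ components are $x_1,\dots,x_\ell$ and whose remaining components vanish. Composing with the tautological inclusion gives
\[
\sigma\colon\cS\hookrightarrow\cO^n\xrightarrow{\;f\;}\cO(H),
\]
a section of $\cSd\otimes\cO(H)$ which vanishes at $(K,\varphi)$ exactly when $K\subseteq T_\varphi$, i.e.~on the Grassmann bundle described in~\S\ref{ss:singlerowlocus}.

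Next I would match scheme structures. Following~\S\ref{ss:resolution}, $\htau_{n,k}\subseteq G(k,n)\times\Pbb^{n^2-1}$ is the zero scheme of the universal section of $\cSd\otimes\cO^n\otimes\cO(H)$ obtained by composing $\cS\hookrightarrow\cO^n$ with the universal matrix; this section has $n$ components (one per row of the matrix), each a section of $\cSd\otimes\cO(H)$. Restricted to $\hL_S$, the second through $n$-th rows vanish identically, because matrices in $L_S$ are supported in the first row, so those components impose no condition, while the first component is precisely $\sigma$. The ideal defining $\hL_S\cap\htau_{n,k}$ therefore reduces to the ideal of $Z(\sigma)$, giving $\hL_S\cap\htau_{n,k}=Z(\sigma)$ scheme-theoretically.

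Finally, $Z(\sigma)$ is manifestly a Grassmann sub-bundle of $\hL_S$: since $f$ is nowhere zero on $\Pbb^{\ell-1}$, its kernel $\ker f$ is a rank-$(n-1)$ vector bundle, and $Z(\sigma)=G(k,\ker f)\to\Pbb^{\ell-1}$, smooth of relative dimension $k(n-1-k)$. Hence $Z(\sigma)$ has codimension $k$ in $\hL_S$, which matches the rank of $\cSd\otimes\cO(H)$; it follows that $\sigma$ is a regular section and that $Z(\sigma)$ is smooth.

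The main subtlety is the scheme-theoretic identification, not the set-theoretic one: a priori $\hL_S\cap\htau_{n,k}$ is cut out by the full ideal generated by all $n$ components of the universal section, and one must verify that the $n-1$ components from rows $2,\dots,n$ do not introduce extra scheme structure. They do not—but for bundle-theoretic, not transversality, reasons: they are identically zero on all of $\hL_S$, not merely on $Z(\sigma)$, so they contribute the zero ideal.
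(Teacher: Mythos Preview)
Your proof is correct and follows the same essential idea as the paper: both construct the section $\sigma:\cS\hookrightarrow\cO^n\to\cO(H)$ and identify its zero scheme with the Grassmann bundle $G(k,\ker f)$. The difference is in the order and style of verification. The paper first checks the scheme-theoretic equality by an explicit coordinate computation (writing out the equations $A\cdot C^t=0$ in a chart and observing they reduce to $k$ equations $c_{i1}+c_{i2}a_{12}+\cdots+c_{i\ell}a_{1\ell}=0$), reads off smoothness and the complete-intersection property from those equations, and only afterwards packages the section intrinsically. You instead argue globally: recognize $\htau_{n,k}$ as the zero scheme of the full section of $(\cSd)^n\otimes\cO(H)$, observe that on $\hL_S$ all but the first-row component vanish identically, and deduce regularity from the codimension count using that $\hL_S$ is smooth (hence Cohen--Macaulay). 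Your route is slightly more conceptual and avoids coordinates entirely; the paper's coordinate check is more hands-on but makes the regularity visible directly. Both are perfectly sound.
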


\begin{proof}
The first assertion may be checked by a coordinate computation. We work in a neighborhood
of the (arbitrarily chosen) point $(K_0,\varphi)\in G(k,n)\times \Pbb^{n^2-1}$ with $K_0$ spanned
by the last $k$ coordinates and $\varphi$ given by $a_{11}=1, a_{ij}=0$ for $(i,j)\ne (1,1)$.
Local coordinates at this point are
\[
\left(
C=\begin{pmatrix}
c_{11} & \dots & c_{1,n-k} & 1 & \dots & 0\\
\vdots & \ddots & \vdots & \vdots & \ddots & \vdots \\
c_{k1} & \dots & c_{k,n-k} & 0 & \dots & 1
\end{pmatrix}
, 
A=\begin{pmatrix}
1 & a_{12} & \dots & a_{1n} \\
a_{21} & a_{22} & \dots & a_{2n} \\
\vdots &  \vdots & \ddots & \vdots \\
a_{n1} & a_{n2} & \dots & a_{nn}
\end{pmatrix}
\right)
\]
and equations for $\htau_{k,n}$ are $A\cdot C^t=0$. Coordinates for $\hL_S$ are
as above, with all $a_{1\ell+1},\dots, a_{1n}$ and $a_{21},\dots, a_{nn}$ set to $0$. 
Therefore, local generators for the ideal of $\hL_S\cap \htau_{k,n}$ in $\hL_S$ are
\begin{equation}\label{eq:equarow}
\left\{\aligned
c_{11}  + c_{12} a_{12} + \dots + c_{1 \ell} a_{1\ell} &= 0 \\
\cdots \quad\quad\quad\quad &\\
c_{k1} + c_{k2} a_{12} + \dots + c_{k\ell } a_{1\ell} &= 0
\endaligned\right.
\end{equation}
(with obvious adaptation if $\ell\ge n-k$).
This shows that $\hL_S\cap \htau_{k,n}$ is nonsingular, and in fact a local complete
intersection of codimension~$k$, and it follows that it agrees with its support, described 
set-theoretically as a Grassmann bundle in the paragraph preceding the statement.

For the second part of the statement, we work more intrinsically. The inclusion of the
space $L_S$ in the span of the top rows determines an inclusion $\cO^\ell \subseteq
\cO^n$ of bundles over $\Pbb^{\ell-1}$. A point $\varphi\in \Pbb^{\ell-1}$ corresponds
to vectors $\hat\varphi$ in the fiber of $\cO(-1)\subseteq \cO^\ell \subseteq \cO^n$;
dualizing, we get a morphism $\cO^n \to \cO(1) = \cO(H)$. Let $T$ be the kernel of this
morphism, so with notation as above, the hyperplane $T_\varphi$ is the fiber of $T$
over $\varphi$, and $\hL_S\cap \htau_{k,n}$ is the Grassmann bundle $G(k,T)$ over
$\Pbb^{\ell-1}$. By construction, $G(k,T)$ is the zero-scheme of the section of 
$\cSd \otimes \cO(H)=\Hom(\cS,\cO(H))$ determined by composition:
\[
\cS \longrightarrow \cO^n \longrightarrow \cO(H)\quad.
\]
This section is regular, as the above local coordinate computation shows.
\end{proof}

\begin{corol}\label{cor:onerowclass}
Let $S$ consist of $\ell$ entries in a row. We have
\[
s(\hL_S\cap \htau_{k,n}, \hL_S)^{\cO(-H)} 
= c(\cO(-H))^{-1} c(\cSd)^{-1}\cap [\hL_S\cap \htau_{k,n}]
\quad.
\]
As a class in $\hL_S=G(k,n)\times \Pbb^{\ell-1}$, 
\begin{equation}\label{eq:onerow}
s(\hL_S\cap \htau_{k,n}, \hL_S)^{\cO(-H)} 
= c(\cO(-H))^{-1} c(\cSd)^{-1} c_{\text{top}}(\cSd\otimes \cO(H))
\quad.
\end{equation}
\end{corol}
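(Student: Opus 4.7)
The plan is to read off the corollary from Lemma~\ref{lem:onerowdes} together with part~\eqref{pt1} of Lemma~\ref{lem:twifacts}. Lemma~\ref{lem:onerowdes} tells us that $Z:=\hL_S\cap \htau_{n,k}$ is cut out as the zero-scheme of a regular section of the rank-$k$ bundle $\cSd\otimes \cO(H)$ on $V:=\hL_S$; in particular $Z$ is regularly embedded in $V$ with normal bundle $N_ZV=(\cSd\otimes \cO(H))|_Z$. This is the key input for both displayed equalities.

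For the first equality I would specialize part~\eqref{pt1} of Lemma~\ref{lem:twifacts} to this situation with $\cL=\cO(-H)$. The lemma gives
\[
s(Z,V)^{\cO(-H)} = c(\cO(-H))^{-1}\,c\bigl(N_ZV\otimes \cO(-H)\bigr)^{-1}\cap [Z].
\]
The simplification I then expect is that the twist on the normal bundle collapses: since $N_ZV\otimes \cO(-H)\cong \cSd\otimes \cO(H)\otimes \cO(-H)\cong \cSd$ (restricted to $Z$), we obtain $c(N_ZV\otimes \cO(-H))=c(\cSd)$, which yields the first formula directly.

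For the second equality I would use the standard fact that the fundamental class of the zero-scheme of a regular section of a rank-$k$ bundle is the top Chern class of that bundle; applied to the section identified in Lemma~\ref{lem:onerowdes}, this gives
\[
[\hL_S\cap \htau_{n,k}] = c_k(\cSd\otimes \cO(H))\cap [\hL_S] = c_{\text{top}}(\cSd\otimes \cO(H))
\]
as a class in $\hL_S$. Substituting into the first formula produces the second formula.

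There is no real obstacle here: the corollary is essentially a direct application of a general principle (twisted Segre class of a regularly embedded subscheme) to a concrete setup already prepared in Lemma~\ref{lem:onerowdes}. The only point that requires attention is the bookkeeping of the twist — checking that the pullbacks of $\cO(H)$ from $\Pbb^{\ell-1}$ to $\hL_S$ and the restriction to $Z$ are handled consistently — but since $\cO(H)\otimes \cO(-H)$ cancels cleanly, this causes no difficulty, and the formulas follow.
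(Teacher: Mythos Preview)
Your proposal is correct and follows exactly the paper's approach: both apply Lemma~\ref{lem:twifacts}\eqref{pt1} with $\cL=\cO(-H)$ to the regular embedding supplied by Lemma~\ref{lem:onerowdes}, and both use that the fundamental class of the zero-scheme of a regular section is the top Chern class. You have in fact spelled out the cancellation $N_ZV\otimes\cO(-H)\cong\cSd$ that the paper leaves implicit, which is helpful.
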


\begin{proof}
By Lemma~\ref{lem:onerowdes}, $\hL_S\cap \htau_{k,n}$ is regularly embedded in
$\hL_S$, with class $c_{\text{top}}(\cSd\otimes \cO(H))$ and normal bundle 
$\cSd\otimes \cO(H)$. The statement then follows from Lemma~\ref{lem:twifacts} 
\eqref{pt1}.
\end{proof}

\begin{corol}\label{cor:SigmaSrow}
Let $S$ consist of $\ell$ entries in a row. Then
\begin{equation}\label{eq:SigmaSrow}
\Sigma_{n,r,S} 
=c(\cQd)  \left( c_{n-r-\ell+1}(\cSd) + \cdots + c_{n-r}(\cSd)\right)\quad.
\end{equation}
\end{corol}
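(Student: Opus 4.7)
The plan is to unwind the definition of $\Sigma_{n,r,S}$ by applying the projection $\pi_{1*}$ to the class computed in Corollary~\ref{cor:onerowclass}, and then read off the coefficient of the top power of $H$ on the $\Pbb^{\ell-1}$-factor of $\hL_S = G(k,n)\times \Pbb^{\ell-1}$. Since $\pi_{1*}$ amounts to integration along this fiber, we have $\pi_{1*}(H^{\ell-1}\cdot\alpha) = \alpha$ for any class $\alpha$ pulled back from $G(k,n)$, and all lower powers of $H$ push forward to zero. Thus the whole computation is the purely algebraic one of extracting the $H^{\ell-1}$-coefficient in the expression
\[
c(\cO(-H))^{-1}\, c(\cSd)^{-1}\, c_{\text{top}}(\cSd\otimes \cO(H))
\]
given by Corollary~\ref{cor:onerowclass}.

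First I would replace $c(\cSd)^{-1}$ by $c(\cQd)$, using the Whitney formula applied to the dual tautological sequence $0\to \cQd\to \cO^n\to \cSd\to 0$ on $G(k,n)$. Next, using that $\cSd$ has rank $k=n-r$ and the standard identity $c_k(E\otimes L)=\sum_{j=0}^{k}c_j(E)\,c_1(L)^{k-j}$ for a rank-$k$ bundle $E$ and line bundle $L$, I would expand
\[
c_{\text{top}}(\cSd\otimes\cO(H)) \;=\; \sum_{j=0}^{k} c_j(\cSd)\, H^{k-j}\quad.
\]
Finally I would expand $c(\cO(-H))^{-1} = (1-H)^{-1} = 1 + H + H^2 + \cdots$ (a finite sum modulo $H^{\ell}=0$ on $\Pbb^{\ell-1}$).

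Multiplying and collecting terms, the coefficient of $H^{\ell-1}$ in the product $(1-H)^{-1}\sum_{j} c_j(\cSd)H^{k-j}$ comes from pairs with $m+(k-j)=\ell-1$, i.e.\ $j = k-\ell+1+m$ with $m\ge 0$. Thus the $H^{\ell-1}$-coefficient equals $c_{k-\ell+1}(\cSd)+c_{k-\ell+2}(\cSd)+\cdots+c_k(\cSd)$. Multiplying by the pulled-back factor $c(\cQd)$ gives
\[
\Sigma_{n,r,S} \;=\; c(\cQd)\,\bigl(c_{n-r-\ell+1}(\cSd)+\cdots+c_{n-r}(\cSd)\bigr)
\]
as claimed. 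There is no real obstacle here, only bookkeeping: the main thing to be careful about is the indexing when $\ell>k$, but in that regime the lower bound collapses to $\max(0,k-\ell+1)=0$ and the corresponding terms $c_0(\cSd)+c_1(\cSd)+\cdots$ combine with $c(\cQd)$ to give a class that will force the vanishing phenomena already noted in Remark~\ref{rem:deq0} and Example~\ref{ex:exarow}.
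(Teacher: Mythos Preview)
Your argument is correct and follows essentially the same route as the paper: apply Corollary~\ref{cor:onerowclass}, expand $c_{\text{top}}(\cSd\otimes\cO(H))$ and $(1-H)^{-1}$, and read off the coefficient of $H^{\ell-1}$. The only cosmetic difference is that you invoke the Whitney relation $c(\cSd)^{-1}=c(\cQd)$ at the outset, whereas the paper carries $c(\cSd)^{-1}$ through and leaves that identification implicit in the final statement.
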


\begin{proof}
By definition, $\Sigma_{n,r,S}=\pi_{1*}s(\hL_S\cap \htau_{k,n}, \hL_S)^{\cO(-H)}$.
Recall that $\pi_1$ denotes the projection $\hL_S\to G(k,n)$, and it acts by collecting
the coefficient of $H^{\ell-1}$. 
We have
\begin{equation}\label{eq:tenpro}
c_{\text{top}}(\cSd\otimes \cO(H)) = \sum_{i=0}^k H^i\cdot c_{k-i}(\cSd)
\end{equation}
(p.~55 in~\cite{85k:14004}), and it follows that the coefficient of $H^{\ell-1}$ in
\[
c(\cO(-H))^{-1} c_{\text{top}}(\cSd\otimes \cO(H)) = \sum_{j\ge 0} H^j \cdot
\sum_{i=0}^k H^i\cdot c_{k-i}(\cSd)
\]
equals $c_k(\cSd) + \cdots + c_{k-\ell+1}(\cSd)$. Therefore
\begin{multline*}\label{eq:Sigmarow}
\Sigma_{n,r,S} = \pi_{1*} \left( c(\cSd)^{-1} c(\cO(-H))^{-1}\cap 
c_{\text{top}}(\cSd\otimes \cO(H)) \right) \\
=c(\cSd)^{-1} \left( c_{k-\ell+1}(\cSd) + \cdots + c_k(\cSd)\right)
\end{multline*}
as stated.
\end{proof}

\subsection{}\label{ss:onerowres}
The information obtained in Corollary~\ref{cor:SigmaSrow} suffices by Theorem~\ref{thm:mainred}
to complete the computation of $d_{n,r,S}$. Here $S$ consists of $\ell$ entries in a row; 
we denote the number $d_{n,r,S}$ by $d_{n,r|\ell}$, since the 
row and the location of the entries in that row are immaterial. 

\begin{theorem}\label{thm:onerow}
With notation as above,
\begin{equation}\label{eq:finalonerow}
d_{n,r|\ell}=\int_{G(n-r,n)} c(\cSd)^{n-1} \sum_{i=0}^{n-r-\ell} c_i(\cSd)\quad.
\end{equation}
\end{theorem}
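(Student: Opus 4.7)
The plan is to combine Theorem~\ref{thm:mainred} with Corollary~\ref{cor:SigmaSrow} and then simplify using the tautological sequence. By Theorem~\ref{thm:mainred},
\[
d_{n,r|\ell} = \int_{G(k,n)} c(\cSd)^n \cap \bigl(1 - \Sigma_{n,r,S}\bigr),
\]
where $k = n-r$, and by Corollary~\ref{cor:SigmaSrow},
\[
\Sigma_{n,r,S} = c(\cQd)\bigl(c_{k-\ell+1}(\cSd) + \cdots + c_k(\cSd)\bigr).
\]
So the first step is simply to substitute the second formula into the first.

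Next, I would use the fact that the tautological sequence $0 \to \cS \to \cO^n \to \cQ \to 0$ dualizes to $0 \to \cQd \to \cO^n \to \cSd \to 0$, and Whitney multiplicativity then gives
\[
c(\cSd)\, c(\cQd) = c(\cO^n) = 1,
\]
so that $c(\cSd)^n \cdot c(\cQd) = c(\cSd)^{n-1}$. After this substitution, the integrand becomes
\[
c(\cSd)^n - c(\cSd)^{n-1} \sum_{j=k-\ell+1}^{k} c_j(\cSd).
\]

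Finally, since $\cSd$ has rank $k$, we have $c(\cSd) = \sum_{j=0}^{k} c_j(\cSd)$, hence
\[
c(\cSd)^n = c(\cSd)^{n-1} \sum_{j=0}^{k} c_j(\cSd).
\]
Subtracting the tail indexed by $j \geq k-\ell+1$ leaves precisely $c(\cSd)^{n-1} \sum_{j=0}^{k-\ell} c_j(\cSd)$, which (with $k-\ell = n-r-\ell$) is the integrand in \eqref{eq:finalonerow}. Because this is a direct algebraic consequence of results already proved, I do not anticipate any obstacle; the only mild subtlety is keeping track of the ranges of summation and the identification $c(\cQd) = c(\cSd)^{-1}$. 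Note that the formula automatically encodes the vanishing $d_{n,r|\ell} = 0$ when $\ell > n-r$ (cf.\ Remark~\ref{rem:deq0} and Example~\ref{ex:exarow}), since in that case the sum $\sum_{i=0}^{n-r-\ell} c_i(\cSd)$ is empty.
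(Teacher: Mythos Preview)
Your proof is correct and follows essentially the same approach as the paper: substitute the expression for $\Sigma_{n,r,S}$ from Corollary~\ref{cor:SigmaSrow} into Theorem~\ref{thm:mainred}, use $c(\cQd)=c(\cSd)^{-1}$ from the tautological sequence, and cancel the tail of $c(\cSd)=\sum_{j=0}^k c_j(\cSd)$. The paper's write-up is slightly more compact (it factors out $c(\cSd)^{n-1}$ directly rather than expanding $c(\cSd)^n$ first), but the argument is the same.
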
 

\begin{proof}
Corollary~\ref{cor:SigmaSrow} yields
\begin{align*}
c(\cSd)^n\left(1-\Sigma_{n,r,S}\right)
&=c(\cSd)^n\left(1-c(\cSd)^{-1} \cap \left( c_{k-\ell+1}(\cSd) 
+ \cdots + c_k(\cSd)\right)\right) \\
&=c(\cSd)^{n-1} \left(c(\cSd) - \left( c_{k-\ell+1}(\cSd) 
+ \cdots + c_k(\cSd)\right)\right) \\
&=c(\cSd)^{n-1} \left(c_0(\cSd) + \cdots + c_{k-\ell}(\cSd)\right)\quad,
\end{align*}
which gives the statement by Theorem~\ref{thm:mainred}.
\end{proof}

\begin{example}\label{ex:exarow}
$\bullet$ $d_{n,r|0}=\int_{G(k,n)} c(\cSd)^n = \deg \sigma_{n,r}$ 
(Lemma~\ref{lem:degcom}).

$\bullet$ For $\ell=1$, \eqref{eq:finalonerow} gives
\begin{align*}
\int_{G(n-r,n)} c(\cSd)^{n-1} \sum_{i=0}^{n-r-1} c_i(\cSd)
&=\int_{G(n-r,n)} c(\cSd)^n - \int_{G(n-r,n)} c(\cSd)^{n-1}\cdot c_{n-r}(\cSd)\\
&=\int_{G(n-r,n)} c(\cSd)^n - \int_{G(n-r,n-1)} c(\cSd)^{n-1}\\
&=\deg \sigma_{n,r}-\deg \sigma_{n-1,r-1} \\
&=\prod_{i=0}^{n-r-1} \frac{\binom{n+i}{n-r}}{\binom{n-r+i}{n-r}} 
-\prod_{i=0}^{n-r-1} \frac{\binom{n-1+i}{n-r}}{\binom{n-r+i}{n-r}} 
\quad.
\end{align*}
Therefore, the multiplicity of $\sigma_{n,r}$ at a matrix of rank~$1$ is 
the degree of $\sigma_{n-1,r-1}$. This is clear for e.g.,~the determinant hypersurface
$\sigma_{n,n-1}$, since the tangent cone to $\sigma_{n,n-1}$ at a rank-$1$ matrix
is a cone over the determinant $\sigma_{n-1,n-2}$. 
In general, this degree computation
can be used to show that the tangent cone to $\sigma_{n,r}$ at a rank-$1$ matrix
is a cone over $\sigma_{n-1,r-1}$.

$\bullet$ $d_{n,r|n-r}=\int_{G(n-r,n)} c(\cSd)^{n-1}$. This degree
may also be evaluated explicitly, yielding
$
\prod_{i=0}^{n-r-2} \frac{\binom{n+i}{n-r}}{\binom{n-r+i}{n-r}}
$.

$\bullet$ For $\ell>n-r$, $d_{n,r|\ell}=0$. Indeed, $\sum_{i=0}^{n-r-\ell} c_i(\cSd)=0$ trivially
in this case. This fact was already mentioned in Remark~\ref{rem:deq0}.
\qede\end{example}

The following Macaulay2 script using Schubert2 (\cite{S2}):
\begin{verbatim}
needsPackage("Schubert2")
onerow = (n,r,l) -> (
    G = flagBundle({n-r,r});
    (S,Q) = bundles G;
    Sd = dual S;
    integral(chern(Sd)^(n-1)*sum(0..n-r-l, i-> chern(i,Sd)))
)
\end{verbatim}
produces a function {\tt onerow\/} which evaluates~\eqref{eq:finalonerow} given the
input of the dimension~$n$ of the ambient space, the rank $r$, and the number $\ell$
of entries of $S$ (which must all be in the same row).

\begin{example} 
The degrees of the joins of the locus of $7\times 7$ matrices of rank $\le r$ with the
space spanned by $3$ entries in the top row are as follows:
\[
\begin{tikzpicture}[baseline=(current  bounding  box.center)]
\path [fill=red] (0,0) rectangle (.5,-.5);
\path [fill=red] (.5,0) rectangle (1,-.5);
\path [fill=red] (1,0) rectangle (1.5,-.5);
\draw (0,0) --(0,-3.5);
\draw (.5,0) --(.5,-3.5);
\draw (1,0) --(1,-3.5);
\draw (1.5,0) --(1.5,-3.5);
\draw (2,0) --(2,-3.5);
\draw (2.5,0) --(2.5,-3.5);
\draw (3,0) --(3,-3.5);
\draw (3.5,0) --(3.5,-3.5);
\draw (0,0) --(3.5,0);
\draw (0,-.5) --(3.5,-.5);
\draw (0,-1) --(3.5,-1);
\draw (0,-1.5) --(3.5,-1.5);
\draw (0,-2) --(3.5,-2);
\draw (0,-2.5) --(3.5,-2.5);
\draw (0,-3) --(3.5,-3);
\draw (0,-3.5) --(3.5,-3.5);
\end{tikzpicture}
\quad\quad\quad\quad
d_{7,r|3}=\begin{cases}
896 & r=1 \\
15582 & r=2 \\
11172 & r=3 \\
490 & r=4 \\
0 & r=5 \\
0 & r=6 \\
0 & r=7
\end{cases}
\]
These may be computed quickly using the script given above. For example,
\begin{verbatim}
i2 : time onerow(7,3,3)
     -- used 0.00669973 seconds
o2 = 11172
\end{verbatim}
on our computing equipment. (By comparison, a verification of
the table for $n=7$, $\ell=3$ shown above took nearly twenty minutes on the
same computer, running a more straightforward algorithm for degrees of 
projections in Macaulay2.)
\qede\end{example}

\subsection{}
By symmetry, $d_{n,r,S} = d_{n,r,S^\dagger}$, where $S^\dagger$ is the transpose 
configuration of entries from~$S$. Thus, \eqref{eq:finalonerow} must also be the degree 
of the projection of the rank-$r$ locus from a set of entries lying in the same {\em column.\/} 
Performing an independent computation for this number from this different viewpoint leads
to an identity in the Chow ring of the Grassmannian.

Let $S$ consist of $\ell$ entries in one column, e.g.:
\begin{center}
\begin{tikzpicture}
\path [fill=red] (0,0) rectangle (.5,-.66);
\path [fill=red] (0,-.84) rectangle (.5,-1.5);
\draw (0,0) --(0,-.66);
\draw (0,-.84) --(0,-2.25);
\draw (.5,0) --(.5,-.66);
\draw (.5,-.84) --(.5,-2.25);
\draw (1,0) --(1,-.66);
\draw (1,-.84) --(1,-2.25);
\draw (0,0) --(1.25,0);
\draw (0,-.5) --(1.25,-.5);
\draw [dashed] (0,-.75) --(1.25,-.75);
\draw (0,-1) --(1.25,-1);
\draw (0,-1.5) --(1.25,-1.5);
\draw (0,-2) --(1.25,-2);
	\draw [decorate,decoration={brace,amplitude=5pt},
			xshift=0pt, yshift=0pt]
			(-.125,-1.5) -- (-.125,-0)
			node[black,midway,xshift=-12.5pt,yshift=0pt]
				{$\ell$};
\end{tikzpicture}
\end{center}
We use notation as above: $L_S$ is the span of the entries of $S$,
$\hL_S$ denotes $G(k,n)\times L_S\cong G(k,n)\cong \Pbb^{\ell-1}$, and $\htau_{n,k}$
is the standard resolution of $\tau_{n,k}=\sigma_{n,n-k}$. The Grassmann class 
$\Sigma_{n,r,S}$ (Definition~\ref{def:GrassS}) needed in order to apply 
Theorem~\ref{thm:mainred} is obtained from $s(\hL_S\cap \htau_{n,k},\hL_S)^{\cO(-H)}$, 
where $H$ is the pull-back of the hyperplane class from the $\Pbb^{\ell-1}$ factor.

\begin{lemma}\label{lem:onecoldes}
$\hL_S\cap \htau_{n,k}\cong G(k,n-1)\times \Pbb^{\ell-1}$.
\end{lemma}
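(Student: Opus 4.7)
The plan is to proceed analogously to the one-row case (Lemma~\ref{lem:onerowdes}), with the key geometric observation that in the column case the kernel of a matrix in the span of $S$ is a \emph{fixed} hyperplane, independent of the matrix. Indeed, assuming (up to permutations) that $S$ consists of the top $\ell$ entries of the first column, a matrix $\varphi\in L_S\setminus\{0\}$ acts on $\kappa^n$ by $v\mapsto v_1\cdot \hat\varphi$, where $\hat\varphi\in \kappa^\ell\subseteq \kappa^n$ is the nonzero column vector of $\varphi$. So $\ker\varphi = V:=\{v\in \kappa^n:v_1=0\}$, and this does not depend on $\hat\varphi$. Set-theoretically this gives $\hL_S\cap \htau_{n,k} = \{(K,[\varphi]):K\subseteq V\} = G(k,V)\times L_S \cong G(k,n-1)\times \Pbb^{\ell-1}$.

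To upgrade this to a scheme-theoretic identification, I would run a local coordinate check mimicking the one in Lemma~\ref{lem:onerowdes}. Around a point $(K_0,\varphi_0)$ with $K_0$ spanned by the last $k$ standard basis vectors and $\varphi_0$ the matrix with $\hat\varphi=e_1$, local coordinates on $G(k,n)$ are the entries $c_{ji}$ of a $k\times(n-k)$ matrix, and on $\Pbb^{\ell-1}$ one takes $\hat\varphi_2,\dots,\hat\varphi_\ell$ in the affine chart $\hat\varphi_1=1$. The equations $A\cdot C^t = 0$ of $\htau_{n,k}$, restricted to $\hL_S$, reduce to the $k\ell$ relations $\hat\varphi_i\cdot c_{j1}=0$ for $i=1,\dots,\ell$, $j=1,\dots,k$. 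Since $\hat\varphi$ is a nowhere-zero section over $\Pbb^{\ell-1}$, in the chart $\hat\varphi_1=1$ this ideal coincides with $(c_{j1})_{j=1,\dots,k}$; by covering $\Pbb^{\ell-1}$ with charts $\hat\varphi_i=1$, the same conclusion holds globally.

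The ideal $(c_{j1})_{j}$ cuts out a smooth subscheme of codimension $k$ in $\hL_S$, and (since none of the coordinates $\hat\varphi_i$ appear) the factor $\Pbb^{\ell-1}$ is swept out freely while the condition $c_{j1}=0$ for all $j$ picks out exactly the $k$-planes $K\subseteq V$. Thus one gets the claimed isomorphism $\hL_S\cap\htau_{n,k}\cong G(k,n-1)\times \Pbb^{\ell-1}$ as schemes.

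Intrinsically, the argument amounts to identifying $\hL_S\cap\htau_{n,k}$ with the zero scheme of the section of $\cSd$ on $\hL_S$ obtained by pulling back the composition $\cS\hookrightarrow \cO^n \xrightarrow{p_1} \cO$ from $G(k,n)$; this section is independent of the $\Pbb^{\ell-1}$ factor, which explains why the intersection is a product rather than a Grassmann bundle as in the row case. The only real obstacle is verifying that the naive $k\ell$ generators of the restricted ideal of $\htau_{n,k}$ collapse scheme-theoretically onto the $k$ generators $c_{j1}$; this is what the local computation accomplishes by exploiting that $\hat\varphi$ never vanishes on $\Pbb^{\ell-1}$.
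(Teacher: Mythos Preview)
Your argument is correct and follows the same approach as the paper: observe that the kernel of any $\varphi\in L_S$ is the fixed hyperplane $\{x_1=0\}$, giving the set-theoretic identification, and then verify the scheme-theoretic statement by a local coordinate computation (which the paper only alludes to, while you carry it out explicitly). Your intrinsic description of $\hL_S\cap\htau_{n,k}$ as the zero-scheme of the section $\cS\hookrightarrow\cO^n\xrightarrow{p_1}\cO$ is exactly what the paper invokes in the proof of the subsequent Corollary~\ref{cor:onecolclass}.
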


\begin{proof}
The intersection $\hL_S\cap \htau_{n,k}$ consists of pairs $(K,\varphi)$ such that
$K\subseteq \ker\varphi$. Since $\varphi$ is concentrated in one column, $\ker\varphi$
consists of a fixed hyperplane, and the statement follows, at least set-theoretically.
A simple coordinate computation shows that this description in fact holds scheme-theoretically.
\end{proof}

\begin{corol}\label{cor:onecolclass}
Let $S$ consist of $\ell$ entries in a column. As a class in $\hL_S$,
\begin{equation}\label{eq:onecol}
s(\hL_S\cap \htau_{n,k}, \hL_S)^{\cO(-H)} = c(\cO(-H))^{-1} c(\cSd\otimes \cO(-H))^{-1} 
c_k(\cSd)\quad.
\end{equation}
\end{corol}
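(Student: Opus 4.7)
The plan is to reduce to the regular embedding case and apply Lemma~\ref{lem:twifacts}~\eqref{pt1}, just as in the proof of Corollary~\ref{cor:onerowclass}. By Lemma~\ref{lem:onecoldes}, the intersection $\hL_S\cap \htau_{n,k}$ is isomorphic to $G(k,n-1)\times \Pbb^{\ell-1}$, and in particular it is nonsingular of codimension $k$ in $\hL_S=G(k,n)\times \Pbb^{\ell-1}$; this already guarantees that $\hL_S\cap \htau_{n,k}$ is regularly embedded. The only remaining ingredient is therefore to identify the normal bundle $N$ of this embedding as a twisted bundle on $G(k,n)\times \Pbb^{\ell-1}$.

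Here the crucial observation is that, in stark contrast with the single-row case of~\S\ref{ss:singlerowlocus}, the kernel of a matrix $\varphi\in L_S$ is \emph{independent} of $\varphi$: if $S$ consists of $\ell$ entries in the $j$-th column, then for every nonzero $\varphi$ in the span of $S$ one has $\ker\varphi=\{x_j=0\}$, a single fixed hyperplane in $\kappa^n$. Consequently the embedding $\hL_S\cap \htau_{n,k}\hookrightarrow \hL_S$ is obtained by pulling back, along the projection $\pi_1:\hL_S\to G(k,n)$, the embedding of the Schubert variety $G(k,n-1)\subset G(k,n)$ parametrizing $k$-planes contained in $\{x_j=0\}$. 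The latter is the zero scheme of the section of $\cSd$ obtained by composing $\cS\hookrightarrow \cO^n$ with the linear form defining $\{x_j=0\}$, so its normal bundle is $\cSd$ and its class is $c_k(\cSd)$. Since this construction is globally pulled back from $G(k,n)$ with no $\cO(H)$ correction, the normal bundle of $\hL_S\cap \htau_{n,k}$ in $\hL_S$ is simply $\cSd$ (pulled back), and $[\hL_S\cap \htau_{n,k}]=c_k(\cSd)\cap [\hL_S]$.

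With this in hand, Lemma~\ref{lem:twifacts}~\eqref{pt1} applied with $\cL=\cO(-H)$ gives
\[
s(\hL_S\cap \htau_{n,k},\hL_S)^{\cO(-H)}
=c(\cO(-H))^{-1} c(\cSd\otimes \cO(-H))^{-1}\cap [\hL_S\cap \htau_{n,k}],
\]
and substituting $[\hL_S\cap \htau_{n,k}]=c_k(\cSd)$ yields the stated formula.

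The only step that is not essentially formal is the identification of the normal bundle, i.e., verifying that the embedding $\hL_S\cap \htau_{n,k}\hookrightarrow \hL_S$ really is the pullback of $G(k,n-1)\hookrightarrow G(k,n)$ along $\pi_1$ with \emph{no} twist by $\cO(H)$. This is what makes the column case simpler than the row case of Corollary~\ref{cor:onerowclass}: the hyperplane $\ker\varphi$ does not vary with $\varphi\in L_S$, so the defining section of $\cSd$ is constant along the $\Pbb^{\ell-1}$ fibers rather than being a section of $\cSd\otimes \cO(H)$. One should confirm this by a brief local coordinate check analogous to the one following~\eqref{eq:equarow}, using that when $S$ sits in a single column all the equations $A\cdot C^t=0$ that are nontrivial on $\hL_S$ involve only the column of $C^t$ corresponding to that fixed column of the matrix.
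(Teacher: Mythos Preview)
Your proposal is correct and follows exactly the paper's approach: identify $\hL_S\cap\htau_{n,k}$ via Lemma~\ref{lem:onecoldes} as the zero-scheme of a regular section of $\cSd$ (pulled back from $G(k,n)$, with no $\cO(H)$ twist), and then apply Lemma~\ref{lem:twifacts}~\eqref{pt1}. The paper's proof is a one-line statement of this; your version simply spells out the reason the normal bundle is $\cSd$ rather than $\cSd\otimes\cO(H)$.
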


\begin{proof}
This follows from Lemma~\ref{lem:onecoldes} and Lemma~\ref{lem:twifacts} \eqref{pt1},
since $G(k,n-1)\times \Pbb^{\ell-1}$ is the zero-scheme of a regular section of $\cSd$.
\end{proof}

The reader should compare~\eqref{eq:onecol} with the similar but different 
expression~\eqref{eq:onerow} for $S$ concentrated in a row.

\begin{corol}\label{cor:SigmaScol}
Let $S$ consist of $\ell$ entries in a column. Then
\begin{equation}\label{eq:SigmaScol}
\Sigma_{n,r,S} = \sum_{i=0}^r \binom{\ell-1+n-r+i}{\ell-1} c_i(\cQd) c_{n-r}(\cSd)\quad.
\end{equation}
\end{corol}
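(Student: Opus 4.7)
The plan is to push forward by $\pi_1$ the twisted Segre class identified in Corollary~\ref{cor:onecolclass}, extracting the coefficient of $H^{\ell-1}$. Rather than expanding $c(\cSd\otimes\cO(-H))^{-1}$ directly, I would reorganize by codimension, which is the natural way the twisted Segre class is defined.

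First, by Lemma~\ref{lem:onecoldes}, $Z := \hL_S\cap\htau_{n,k}\cong G(k,n-1)\times\Pbb^{\ell-1}$ is regularly embedded in $\hL_S=G(k,n)\times\Pbb^{\ell-1}$ as the zero-scheme of a regular section of (the pull-back of) $\cSd$. This bundle has rank $k=n-r$, so $[Z]=c_k(\cSd)\cap[\hL_S]$ and the normal bundle equals $\cSd$. Hence the untwisted Segre class is
\[
s(Z,\hL_S)=c(\cSd)^{-1}\,c_k(\cSd)\cap[\hL_S]=c(\cQd)\,c_k(\cSd)\cap[\hL_S],
\]
where I used the relation $c(\cSd)\,c(\cQd)=1$ coming from the tautological sequence on $G(k,n)$. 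The codimension-$(k+i)$ piece of this class in $\hL_S$ is $c_i(\cQd)\,c_k(\cSd)$, for $0\le i\le r$.

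Next, by Definition~\ref{def:twisSc} (as reformulated in the sentence following it), twisting by $\cO(-H)$ caps the codimension-$c$ piece by $c(\cO(-H))^{-(c+1)}=(1-H)^{-(c+1)}$. Therefore
\[
s(Z,\hL_S)^{\cO(-H)}=\sum_{i=0}^{r}\frac{c_i(\cQd)\,c_{n-r}(\cSd)}{(1-H)^{n-r+i+1}}.
\]

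Finally, applying $\pi_{1*}$ extracts the coefficient of $H^{\ell-1}$. Using $(1-H)^{-m}=\sum_{p\ge 0}\binom{m+p-1}{p}H^p$ with $m=n-r+i+1$, this coefficient equals $\binom{n-r+i+\ell-1}{\ell-1}$, yielding the stated formula. No step should pose any real difficulty; the key organizational point is that reducing to the codimension decomposition of the twisted Segre class makes the $H$-dependence immediate, so that the push-forward by $\pi_1$ reduces to a single negative-binomial expansion. One could alternatively try to simplify $c(\cSd\otimes\cO(-H))^{-1}c_k(\cSd)$ via Chern roots and then collect powers of $H$, but that route is more cumbersome than the codimension-by-codimension approach above.
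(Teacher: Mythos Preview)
Your argument is correct and reaches the same intermediate expression
\[
s(\hL_S\cap\htau_{n,k},\hL_S)^{\cO(-H)}=\sum_{i=0}^{r}\frac{c_i(\cQd)\,c_{n-r}(\cSd)}{(1-H)^{n-r+i+1}}
\]
that the paper obtains, and then extracts the coefficient of $H^{\ell-1}$ in the same way. The route differs slightly: the paper starts from the already-twisted formula in Corollary~\ref{cor:onecolclass}, tensors the dual tautological sequence by $\cO(-H)$ to rewrite $c(\cSd\otimes\cO(-H))^{-1}=(1-H)^{-n}c(\cQd\otimes\cO(-H))$, and then invokes the Chern-class-of-tensor-product formula. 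You instead compute the \emph{untwisted} Segre class first, where the identity $c(\cSd)^{-1}=c(\cQd)$ is immediate, and then apply the codimension-by-codimension description of the twist from Definition~\ref{def:twisSc}. Your path is marginally more economical, since it avoids the tensor-product Chern class formula entirely; the paper's path has the advantage of proceeding directly from the expression in Corollary~\ref{cor:onecolclass} without revisiting Lemma~\ref{lem:onecoldes}. Either is fine.
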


\begin{proof}
Tensoring the dual of the universal sequence over $G(k,n)$ by $\cO(-H)$ gives an exact
sequence
\[
\xymatrix{
0 \ar[r] & \cQd\otimes \cO(-H) \ar[r] & \cO(-H)^{n-1} \ar[r] & \cSd\otimes \cO(-H)
\ar[r] & 0
}
\]
from which $c(\cSd\otimes \cO(-H))^{-1}=(1-H)^{-n} c(\cQd\otimes \cO(-H))$.
Substituting in~\eqref{eq:onecol} and applying the formula for the Chern class of a tensor 
product (\cite{85k:14004}, p.~55) gives
\[
s(\hL_S\cap \htau_{n,k}, \hL_S)^{\cO(-H)} = \sum_{i=0}^{n-k}
\frac{c_i(\cQd) c_k(\cSd)}{(1-H)^{k+i+1}}\quad.
\]
The Grassmann class is obtained by pushing this forward to $G(k,n)$, which amounts
to computing the coefficient of $H^{\ell-1}$, with the stated result.
\end{proof}

Applying Theorem~\ref{thm:mainred} gives the degree:

\begin{theorem}\label{thm:onecol}
If $S$ consists of $\ell$ elements in a column, then with $k=n-r$:
\begin{equation}\label{eq:finalonecol}
d_{n,r,S} = \int_{G(k,n)} c(\cSd)^n \left(1 - 
\sum_{i=0}^r \binom{\ell-1+k+i}{\ell-1} c_i(\cQd) c_k(\cSd)\right)
\quad.
\end{equation}
\end{theorem}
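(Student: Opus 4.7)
The plan is simply to substitute: Theorem~\ref{thm:mainred} already reduces $d_{n,r,S}$ to the intersection number
\[
d_{n,r,S} = \int_{G(k,n)} c(\cSd)^n \cap \bigl(1 - \Sigma_{n,r,S}\bigr),
\]
and the Grassmann class $\Sigma_{n,r,S}$ for $S$ concentrated in a single column has just been obtained in Corollary~\ref{cor:SigmaScol}. Inserting that expression verbatim into the master formula produces~\eqref{eq:finalonecol}.

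All of the geometric content has been front-loaded into the preceding three results. Lemma~\ref{lem:onecoldes} identifies $\hL_S\cap \htau_{n,k}$ as $G(k,n-1)\times \Pbb^{\ell-1}$, realized as the zero-scheme of a regular section of $\cSd$ on $\hL_S$; Corollary~\ref{cor:onecolclass} then applies Lemma~\ref{lem:twifacts}\eqref{pt1} to write down the twisted Segre class; and Corollary~\ref{cor:SigmaScol} uses the dual tautological sequence tensored by $\cO(-H)$,
\[
0 \to \cQd\otimes \cO(-H) \to \cO(-H)^{\,n-1} \to \cSd\otimes \cO(-H) \to 0,
\]
together with the standard formula for the total Chern class of a tensor product, to push forward the twisted Segre class to $G(k,n)$ by extracting the coefficient of $H^{\ell-1}$.

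There is no real obstacle in this step: the assembly is formal. The only thing I would double-check is the compatibility of the bookkeeping, namely that the binomial factors $\binom{\ell-1+k+i}{\ell-1}$ in~\eqref{eq:SigmaScol} are precisely the coefficients of $H^{\ell-1}$ in $(1-H)^{-(k+i+1)}$ and that the remaining Chern-class factor $c_i(\cQd)\,c_k(\cSd)$ lives in the correct codimension to pair with $c(\cSd)^n$; both checks are already implicit in the proof of Corollary~\ref{cor:SigmaScol} and need not be revisited. With the substitution made, the theorem is established, and the expression is in a form directly usable by Schubert2 for numerical evaluation, in parallel with the script provided in~\S\ref{ss:onerowres} for the row case.
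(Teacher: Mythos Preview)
Your proposal is correct and matches the paper's own proof exactly: the paper simply states ``Applying Theorem~\ref{thm:mainred} gives the degree'' and records~\eqref{eq:finalonecol}, i.e., it substitutes the Grassmann class from Corollary~\ref{cor:SigmaScol} into the master formula of Theorem~\ref{thm:mainred}. Your additional recap of the chain Lemma~\ref{lem:onecoldes} $\to$ Corollary~\ref{cor:onecolclass} $\to$ Corollary~\ref{cor:SigmaScol} is accurate but already upstream of the theorem and not needed for this step.
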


\subsection{}\label{ss:colexp}
As mentioned above, the expression obtained in Theorem~\ref{thm:onecol}
must agree with the expression given for $d_{n,r|\ell}$ in Theorem~\ref{thm:onerow}.
In other words, if $\Sigma'_{n,r|\ell}$, $\Sigma''_{n,r|\ell}$ denote the classes 
\eqref{eq:SigmaSrow}, \eqref{eq:SigmaScol} obtained for $S$ consisting of $\ell$ elements
of a row, resp., a column, then necessarily
\[
\int_{G(k,n)} c(\cSd)^n\cdot \Sigma'_{n,r|\ell} 
= \int_{G(k,n)} c(\cSd)^n\cdot \Sigma''_{n,r|\ell}\quad.
\]
This equality translates into identities in the Chow ring of the Grassmannian, for all choices 
of $n,r,\ell$. They are nontrivial, in the sense that in general the classes $\Sigma'_{n,r|\ell}$, 
$\Sigma''_{n,r|\ell}$ differ. In fact, even the dimensions of these classes need not 
agree.

\begin{example}
For $n=3$, $r=1$, $\ell=2$, $\Sigma'_{n,r|\ell}$ and $\Sigma''_{n,r|\ell}$ are classes in 
$G(2,3)\cong \Pbb^2$. The reader can verify that $\Sigma'_{3,1|2}$ is the hyperplane class 
$h$, while $\Sigma''_{3,1|2}$ equals $3h^2$.
\[
\begin{tikzpicture}[baseline=(current  bounding  box.center)]
\path [fill=red] (0,0) rectangle (1,-.5);
\draw (0,0) --(0,-1.5);
\draw (.5,0) --(.5,-1.5);
\draw (1,0) --(1,-1.5);
\draw (1.5,0) --(1.5,-1.5);
\draw (0,0) --(1.5,0);
\draw (0,-.5) --(1.5,-.5);
\draw (0,-1) --(1.5,-1);
\draw (0,-1.5) --(1.5,-1.5);
\end{tikzpicture}
\quad\quad
\Sigma'_{3,1|2}=h\quad\quad;\quad\quad
\begin{tikzpicture}[baseline=(current  bounding  box.center)]
\path [fill=red] (0,0) rectangle (.5,-1);
\draw (0,0) --(0,-1.5);
\draw (.5,0) --(.5,-1.5);
\draw (1,0) --(1,-1.5);
\draw (1.5,0) --(1.5,-1.5);
\draw (0,0) --(1.5,0);
\draw (0,-.5) --(1.5,-.5);
\draw (0,-1) --(1.5,-1);
\draw (0,-1.5) --(1.5,-1.5);
\end{tikzpicture}
\quad\quad
\Sigma''_{3,1|2}=3h^2\quad\quad
\]
With this notation, $c(\cSd)^3=(1+h+h^2)^3=1+3h+6h^2$, so that
\[
\int c(\cSd)^3\cdot \Sigma'_{3,1|2} = 3 = \int c(\cSd)^3\cdot \Sigma''_{3,1|2}
\]
as expected.
\qede\end{example}

We can use the equality of degrees to obtain information on
$d_{n,r|\ell}$. It is straightforward to write \eqref{eq:finalonecol} in terms of an intersection
degree in $G(k,n-1)$:
\begin{equation}\label{eq:onecolalt}
d_{n,r,S} = \prod_{i=0}^{k-1} \frac{\binom{n+i}k}{\binom{k+i}k} 
-\sum_{i=0}^{n-k-1} \binom{\ell-1+k+i}{\ell-1} \int_{G(k,n-1)} c(\cSd)^n c_i(\cQ'^\vee)\quad.
\end{equation}
where $\cQ'$ denotes the universal quotient bundle on $G(k,n-1)$.
The expression in~\eqref{eq:onecolalt} must evaluate $d_{n,r|\ell}$, and hence
agree with~\eqref{eq:finalonerow}. Formula~\eqref{eq:onecolalt} and the degree 
computation
\begin{equation}\label{eq:schubc}
\int_{G(k,n-1)} c(\cSd)^n \cdot c_i(\cQ')=
\frac{\binom n{k+1}\cdots \binom{n+k-1}{k+1}}{\binom {k+1}{k+1}\cdots \binom{2k}{k+1}}
\cdot\frac{\binom {n-k-1}i \binom{i+k-1}i }{\binom{2k+i}i}\quad,
\end{equation}
(which is likely well-known) yield the following explicit expression for $d_{n,r|\ell}$:
\begin{equation}\label{eq:exponerow}
d_{n,r|\ell} = \prod_{j=0}^{k-1} \frac{\binom{n+j}k}{\binom{k+j}k}
-\prod_{j=0}^{k-1} \frac{\binom{n+j}{k+1}}{\binom{k+1+j}{k+1}}
\sum_{i=0}^{n-k-1} (-1)^i \binom{\ell-1+k+i}{\ell-1} 
\frac{\binom {n-k-1}i \binom{i+k-1}i }{\binom{2k+i}i}
\end{equation}
for $\ell\le k=n-r$. (The script given in~\S\ref{ss:onerowres} confirms this expression
for dozens of examples.) A reader more 
versed in Schubert calculus than this writer could verify this formula rigorously
(by checking \eqref{eq:schubc})
or provide more directly an explicit expression computing~\eqref{eq:finalonerow}.

\subsection{}\label{ss:benzene}
Here is a table displaying $d_{a+b,a|b-2}$ for $a=1,\dots,10$ and $b=2,3$:
\begin{center}
  \begin{tabular}{ | r || r | r | r | r | r | r | r | r | r | r |}
    \hline
     & $a=1$ & 2 & 3 & 4 & 5 & 6 & 7 & 8 & 9 & 10\\ \hline \hline
    $b=2$ & 6 & 20 & 50 & 105 & 196 & 336 & 540 & 825 & 1210 & 1716 \\ \hline
    3 & 19 & 155 & 805 & 3136 & 9996 & 27468 & 67320 & 150645 & 313027 & 611611 \\
    \hline
  \end{tabular}
\end{center}
This table matches the one found on page 188 of~\cite{benzenoids1988kekule} 
for the values of $K(D^j(a,b))$, the number of {\em Kekul\'e structures for benzenoid
hydrocarbons\/} in an `oblate pentagon' configuration:
\begin{center}
\begin{tikzpicture}
  \begin{scope}[%
every node/.style={anchor=west,
regular polygon,
regular polygon rotate = 30, 
regular polygon sides=6,
draw,
minimum width=15pt,
outer sep=0,
},
      transform shape]
    \node (A) {};
    \node (B) at (A.side 5) {};
    \node (C) at (B.side 5) {};
    \node (D) at (C.side 5) {};
  \end{scope}
  \begin{scope}[%
every node/.style={anchor=north,
regular polygon,
regular polygon rotate = 30, 
regular polygon sides=6,
draw,
minimum width=15pt,
outer sep=0,
},
      transform shape]
    \node ({EA}) at (A.corner 3) {};
    \node ({FA}) at ({EA}.corner 3) {};
    \node ({GA}) at ({FA}.corner 5) {};
    \node ({HA}) at ({GA}.corner 5) {};
    \node ({EB}) at (B.corner 3) {};
    \node ({FB}) at ({EB}.corner 3) {};
    \node ({GB}) at ({FB}.corner 5) {};
    \node ({HB}) at ({GB}.corner 5) {};
    \node ({EC}) at (C.corner 3) {};
    \node ({FC}) at ({EC}.corner 3) {};
    \node ({GC}) at ({FC}.corner 5) {};
    \node ({HC}) at ({GC}.corner 5) {};
    \node ({ED}) at (D.corner 3) {};
    \node ({FD}) at ({ED}.corner 3) {};
    \node ({GD}) at ({FD}.corner 5) {};
    \node ({HD}) at ({GD}.corner 5) {};
    \node (E) at (D.corner 5) {};
    \node (F) at (E.corner 3) {};
    \node (G) at (F.corner 5) {};
  \end{scope}
	\draw [decorate,decoration={brace,amplitude=5pt},
			xshift=0pt, yshift=0pt]
			(1.7,-1.85) -- (0.15,-1.85)
			node[black,midway,yshift=-10pt]
				{$b$};
	\draw [decorate,decoration={brace,amplitude=5pt},
			xshift=0pt, yshift=0pt]
			(0.02,-1.8) -- (-.5,-.85)
			node[black,midway,xshift=-10pt,yshift=-5pt]
				{$a$};
	\draw [decorate,decoration={brace,amplitude=5pt},
			xshift=0pt, yshift=0pt]
			(-.5,-.72) -- (0.02,.22)
			node[black,midway,xshift=-10pt,yshift=5pt]
				{$a$};
\end{tikzpicture}
\end{center}
(For $b>3$, the values of $K(D^j(a,b))$ and $d_{a+b,a|b-2}$ as above do not match.)
Several other series of numbers of Kekul\'e structures may be expressed as numbers
$d_{n,r|\ell}$. For example, the second row in the above table is one of four nonzero
possibilities for $d_{n,n-3|\ell}$:
\begin{center}
  \begin{tabular}{ | r || r | r | r | r | r | r | r | r |}
    \hline
     & $n=3$ & 4 & 5 & 6 & 7 & 8 & 9 & 10\\ \hline \hline
    $\ell=0$ & 1 & 20 & 175 & 980 & 4116 & 14112 & 41580 & 108900 \\ \hline
    1 & 1 & 19 & 155 & 805 & 3136 & 9996 & 27468 & 67320 \\ \hline
    2 & 1 & 16 & 110 & 490 & 1666 & 4704 & 11592 & 25740 \\ \hline
    3 & 1 & 10 & 50 & 175 & 490 & 1176 & 2520 & 4950 \\
    \hline
  \end{tabular}
\end{center}
These agree with the numbers of Kekul\'e structures for benzenoids forming `$5$-tier 
dihedral hexagons, oblate pentagons, intermediate pentagons', and `$4$-tier
centrosymmetric hexagons' (\cite{benzenoids1988kekule}, p.~167 no.~1, 2, 3, 
and p.~166 no.~1, respectively). We are not aware of any direct connection between 
results in the extensive literature on Kekul\'e structures and the enumerative geometry 
of varieties expressing rank conditions. Is every degree $d_{n,r,S}$ the number of 
Kekul\'e structures for a benzenoid hydrocarbon molecule? The degrees
$\deg\sigma_{n,r}$ are all Kekul\'e numbers, for dihedral hexagons: with notation as
on p.~108 of~\cite{benzenoids1988kekule}, $K\{O(a,b)\}$ equals $d_{a+b,b|0}=\deg
\sigma_{a+b,b}$.
\begin{center}
\begin{tikzpicture}
  \begin{scope}[%
every node/.style={anchor=west,
regular polygon,
regular polygon rotate = 30, 
regular polygon sides=6,
draw,
minimum width=15pt,
outer sep=0,
},
      transform shape]
    \node (A) {};
    \node (B) at (A.side 5) {};
  \end{scope}
  \begin{scope}[%
every node/.style={anchor=north,
regular polygon,
regular polygon rotate = 30, 
regular polygon sides=6,
draw,
minimum width=15pt,
outer sep=0,
},
      transform shape]
    \node ({EA}) at (A.corner 3) {};
    \node ({FA}) at ({EA}.corner 3) {};
    \node ({GA}) at ({FA}.corner 5) {};
    \node ({HA}) at ({GA}.corner 5) {};
    \node ({EB}) at (B.corner 3) {};
    \node ({FB}) at ({EB}.corner 3) {};
    \node ({GB}) at ({FB}.corner 5) {};
    \node ({HB}) at ({GB}.corner 5) {};
    \node ({C}) at ({B}.corner 5) {};
    \node ({D}) at ({C}.corner 3) {};
    \node ({E}) at ({C}.corner 5) {};
    \node ({F}) at ({D}.corner 5) {};
  \end{scope}
	\draw [decorate,decoration={brace,amplitude=5pt},
			xshift=0pt, yshift=0pt]
			(.8,-1.85) -- (0.1,-1.85)
			node[black,midway,yshift=-10pt]
				{$b$};
	\draw [decorate,decoration={brace,amplitude=5pt},
			xshift=0pt, yshift=0pt]
			(0.02,-1.8) -- (-.5,-.85)
			node[black,midway,xshift=-10pt,yshift=-5pt]
				{$a$};
	\draw [decorate,decoration={brace,amplitude=5pt},
			xshift=0pt, yshift=0pt]
			(-.5,-.72) -- (0.02,.22)
			node[black,midway,xshift=-10pt,yshift=5pt]
				{$a$};
\end{tikzpicture}
\end{center}

There are several other apparent cameo appearances of these numbers in the literature,
which it would be interesting to explain more conceptually. The $\ell=0$ row in the above 
table (i.e., $\deg \sigma_{n, n-3}$) reproduces
the dimension $\dim V^{(n-3)}$ of the {\em distinguished module of dimension~$20$
of a Lie algebra in the subexceptional series\/} as per the $a=2$ case in Theorem~7.2 
of~\cite{MR2204753}. The $\ell=3$ row agrees with the dimension $H_5(w)$ of the
{\em space of semiinvariants of weight $w$,\/} $k[\text{Mat}(2,5)]^{SL(2)}_{(w)}$,
as computed in~\cite{MR2004218}, p.~238.


\section{Building $S$ from blocks}\label{blocks}

\subsection{}\label{ss:conessplayed}
In this section we will use the information obtained in Corollary~\ref{cor:SigmaSrow} 
and Corollary~\ref{cor:SigmaScol} to compute $d_{n,r,S}$ in the more interesting 
cases in which $S$ is built from entries in separate rows and columns. For example,
this will give $d_{n,r,S}$ when $S$ consists of a collection of entries $(i_s,j_s)$ such
that all $j_s$ are distinct, i.e., such that no two entries in $S$ are in the same column;
this will in particular cover the case in which $S$ is a subset of the diagonal of a matrix.

In general, assume $S$ (possibly after a permutation of rows and columns) consists of two blocks
$S'$, $S''$ with no overlapping rows or columns:
\begin{center}
\begin{tikzpicture}
\node at (.75,-.5) [red] {$S'$};
\node at (2.25,-2) [red] {$S''$};
\draw (0,0) --(0,-3);
\draw [dotted] (.5,0) --(.5,-3);
\draw [dotted] (1,0) --(1,-3);
\draw (1.5,0) --(1.5,-3);
\draw [dotted] (2,0) --(2,-3);
\draw [dotted] (2.5,0) --(2.5,-3);
\draw (3,0) --(3,-3);
\draw (0,0) --(3,0);
\draw [dotted] (0,-.5) --(3,-.5);
\draw (0,-1) --(3,-1);
\draw [dotted] (0,-1.5) --(3,-1.5);
\draw [dotted] (0,-2) --(3,-2);
\draw [dotted] (0,-2.5) --(3,-2.5);
\draw (0,-3) --(3,-3);
\end{tikzpicture}
\end{center}
We will prove a relation at the level of Grassmann classes:
\[
(1-\Sigma_{n,r,S}) = (1-\Sigma_{n,r,S'}) (1-\Sigma_{n,r,S''})
\]
(Theorem~\ref{thm:Sigblocks}) which will allow us to analyze the cases mentioned 
above; in fact, it will lead to a direct generalization of the results obtained 
in~\S\ref{singlerow}.
This multiplicative property organizes the degrees $d_{n,r,S}$, but is invisible at the 
numerical level.

The proof of Theorem~\ref{thm:Sigblocks} relies on more sophisticated properties 
of Segre classes, which we state in this subsection. 

First, we need to deal with a generalization of the join construction. Consider two
disjoint subspaces $\Pbb^m$, $\Pbb^{M-m-1}$ of a projective space $\Pbb^M$,
and let $I$ be a homogeneous ideal, defining a subscheme $Z$ of $\Pbb^m$. 
Let $\vZ\subseteq \Pbb^M$ be the subscheme of $\Pbb^M$ defined by a set
of generators of $I$. 
Geometrically, $\vZ$ is the join of $Z$ and $\Pbb^{M-m-1}$ in $\Pbb^M$ (i.e., 
the cone over $Z$ with vertex $\Pbb^{M-m-1}$).

Since $Z$ and $\Pbb^{M-m-1}$ are disjoint, the projection of $Z$ from $\Pbb^{M-m-1}$ 
is isomorphic to~$Z$ itself, and it follows that the degree of $\vZ$ equals the degree $Z$.
Segre classes are a more refined invariant than degree, so a relation between
$s(Z,\Pbb^m)$ and $s(\vZ,\Pbb^M)$ is bound to be subtler. For example, the Segre
class of $\vZ$ depends on the specific ideal $I$ chosen to define~$Z$.

\begin{example}
Let $m=1$ and $M=2$, so we are dealing with the simple case of joining a point
in the plane with a subscheme of a line. The ideals $(x_0,x_1)$ and $(x_0^2,x_0 x_1)$
define the same subscheme $Z$ in $\Pbb^1$ (i.e., a reduced point); but the Segre 
classes of the corresponding joins, defined by $(x_0,x_1)$ and $(x_0^2,x_0 x_1)$ 
in $\Pbb^2$, differ (due to the embedded component at the origin in the second case).
\qede\end{example}

In fact we need to deal with a somewhat more general situation, for which $Z$ is a 
subscheme of a product $V\times \Pbb^m$. 
We assume that $Z$ is defined by a section of a vector bundle 
$\cE\otimes \cO(H)$, where $\cE$ is the pull-back of a 
vector bundle from $V$, and $H$ denotes the hyperplane class. (This condition will
be automatically verified for all the examples we will consider.) For $M>m$, we
let $\vZ= Z \vee (V\times \Pbb^{M-m-1})$ be the subscheme of $V\times \Pbb^M$
defined by the `pull-back' of the section to $V\times \Pbb^M$.
Since this join operation $\vee$ clearly preserves rational equivalence, we can
extend it to the Chow group and define a homomorphism $\alpha \mapsto
\alpha\vee (V\times \Pbb^{M-m-1})$ from $A_*Z$ to $A_*\vZ$.

\begin{lemma}\label{lem:scone}
With the above notation,
\begin{multline}\label{eq:cones}
s(\vZ, V\times \Pbb^M)^{\cO(-H)} \\ 
= s(Z,V\times \Pbb^m)^{\cO(-H)}\vee 
(V\times\Pbb^{M-m-1}) + (1+H+H^2+\cdots) \cap [V\times \Pbb^{M-m-1}]\quad.
\end{multline}
\end{lemma}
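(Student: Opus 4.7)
My plan is to pass to the blow-up $\pi:\Til Y\to Y:=V\times\Pbb^M$ of $Y$ along the ``vertex'' $\Pi:=V\times\Pbb^{M-m-1}$, exploiting that this blow-up is a $\Pbb^{M-m}$-bundle $\eta:\Til Y\to Y':=V\times\Pbb^m$ that resolves the linear projection from $\Pi$. Let $E$ be the exceptional divisor; a short coordinate computation gives the crucial relation
\[
\pi^*\cO(H) \;\cong\; \eta^*\cO(H')\otimes\cO(E),
\]
where $H'$ denotes the hyperplane class of $\Pbb^m$. The hypothesis that $\cE$ is pulled back from $V$, and that the defining section $\sigma$ of $Z$ is already defined over $Y'$, then implies that its pull-back to $\Til Y$ factors as
\[
\pi^*\hat\sigma \;=\; t_E\otimes\eta^*\sigma,
\]
with $\hat\sigma$ the corresponding section on $Y$ and $t_E$ the tautological section of $\cO(E)$. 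Hence $\pi^{-1}\vZ$, as a zero scheme, decomposes set-theoretically as $E\cup Z^*$, where $Z^*:=\eta^{-1}(Z)$ is the strict transform; the scheme structure is given by the product ideal $I_E\cdot I_{Z^*}$.

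By Lemma~\ref{lem:twifacts}~\eqref{pt2},
\[
s(\vZ,Y)^{\cO(-H)} \;=\; \pi_*\Big(s(\pi^{-1}\vZ,\Til Y)^{\pi^*\cO(-H)}\Big),
\]
so the task becomes to evaluate the twisted Segre class of $\pi^{-1}\vZ$ and push it forward. The core step I would establish is an additivity property for the twisted Segre class of the product-ideal scheme $V(t_E\otimes\eta^*\sigma)$:
\[
s(\pi^{-1}\vZ,\Til Y)^{\pi^*\cO(-H)} \;=\; s(Z^*,\Til Y)^{\pi^*\cO(-H)} + s(E,\Til Y)^{\pi^*\cO(-H)}.
\]
Such additivity fails for ordinary Segre classes, but the twist by $\cO(-H)$ is engineered to absorb the cross-contribution from the $\cO(E)$-factor, making the decomposition clean.

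Once additivity is in hand, the two summands are handled separately. For the $Z^*$-term, flatness of $\eta$ together with the relation $\pi^*\cO(-H)=\eta^*\cO(-H')\otimes\cO(-E)$ identifies $s(Z^*,\Til Y)^{\pi^*\cO(-H)}$ with a pull-back of $s(Z,Y')^{\cO(-H')}$, modulo an $\cO(E)$-twist that gets absorbed under the push-forward; then $\pi_*$ collapses each $E$-fiber to a point of $\Pi$, producing precisely the ``cone with vertex $\Pi$'' at the level of Chow classes, i.e.~the first summand $s(Z,Y')^{\cO(-H)}\vee (V\times\Pbb^{M-m-1})$ of the statement. For the $E$-term, Lemma~\ref{lem:twifacts}~\eqref{pt1} applies directly since $E$ is a Cartier divisor with normal bundle $\cO(E)$; after twisting and pushing forward (using the standard formulas for $\pi_*(E^k\cap[\Til Y])$ in terms of Segre classes of $N_\Pi Y$), the Chern-class factors in the normal directions cancel and one is left with the geometric series $(1+H+H^2+\cdots)\cap[\Pi]$.

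The hard part is the additivity step. The ideal $(t_E)\cdot I_{Z^*}$ is neither the sum nor the intersection of $(t_E)$ and $I_{Z^*}$, so the scheme $\pi^{-1}\vZ$ is neither a transverse union nor a splayed intersection, and standard residual-intersection formulas do not apply. What has to be shown is that $C_{\pi^{-1}\vZ}\Til Y$ admits, after the $\cO(-H)$-twist, a clean decomposition into the contributions of $E$ and of $Z^*$ with no interaction term. This is precisely the kind of structural property for which the twisted Segre class machinery of~\cite{MR96d:14004} and~\cite{MR1393259} is designed, and most of the technical work of the proof will go into this verification; the rest is routine Chern-class bookkeeping.
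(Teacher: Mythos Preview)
The paper does not prove this lemma; it is stated as a special case of a rationality result whose proof is deferred to~\cite{Segrat}. So there is no in-paper argument to compare against, and I can only assess your proposal on its own merits.

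Your geometric setup is correct: the blow-up $\Til Y$ of $V\times\Pbb^M$ along $\Pi$ is a $\Pbb^{M-m}$-bundle over $V\times\Pbb^m$, the relation $\pi^*\cO(H)\cong\eta^*\cO(H')\otimes\cO(E)$ holds, and the ideal of $\pi^{-1}\vZ$ is indeed the product $I_E\cdot I_{Z^*}$. But your central additivity claim
\[
s(\pi^{-1}\vZ,\Til Y)^{\pi^*\cO(-H)} \;=\; s(Z^*,\Til Y)^{\pi^*\cO(-H)} + s(E,\Til Y)^{\pi^*\cO(-H)}
\]
is \emph{false}. Take $V$ a point, $m=1$, $M=2$, and $Z$ a single reduced point of $\Pbb^1$; then $\vZ$ is a line through the blow-up center, $\Til Y=\mathbb{F}_1$, $Z^*$ is a fiber $f$, and a direct computation gives $s(\pi^{-1}\vZ,\Til Y)^{\cO(-\pi^*H)}=(E+f)+[\text{pt}]$, whereas the sum of the two pieces is $(E+f)+3[\text{pt}]$. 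The discrepancy persists after push-forward. More importantly, you have misidentified the relevant tool: a scheme with ideal $I_D\cdot I_R$ for $D$ a Cartier divisor is \emph{exactly} the situation handled by Fulton's residual intersection formula (Proposition~9.2 in~\cite{85k:14004}, or the twisted version in Proposition~3 of~\cite{MR96d:14004}, which the paper itself invokes in~\S\ref{ss:corner}). That formula does not give naive additivity in the $\cO(-H)$-twist---there is a genuine cross-term involving $\cO(E)$---but it gives a precise relation that, once pushed forward and combined with the identification $\pi^*\cO(-H)=\eta^*\cO(-H')\otimes\cO(-E)$, should yield the statement. So the strategy is right, but you need to replace the hoped-for additivity with the actual residual formula and carry the $\cO(E)$-correction through the push-forward.
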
 

This is a particular case of a rationality result for Segre classes of ind-schemes,
and its proof is discussed elsewhere (\cite{Segrat}).
This result will allow us to control the behavior of the relevant classes as the chosen
set $S$ of entries increases in size. In fact, \eqref{eq:cones} is our main reason for 
choosing to work with the class introduced in Definition~\ref{def:twisSc}: a relation 
for ordinary Segre classes may be obtained from \eqref{eq:cones}, but is more 
complicated. As a rule, formulas for the twisted classes in the concrete situations that 
are considered in this note are simpler than formulas for ordinary Segre classes
(this is already the case for the formula obtained in Corollary~\ref{cor:onerowclass}).

A second result we will need concerns {\em splayed\/} subschemes. Two subschemes
$X$, $Y$ of a nonsingular variety $V$ are {\em splayed\/} if at each point of their intersection
there are analytic coordinates $(x_1,\dots, x_a,y_1,\dots, y_b)$ for $V$ such that $X$ may be 
defined by an ideal generated by functions in the coordinates $x_i$ and $Y$ by an ideal 
generated by functions in the coordinates~$y_j$.

\begin{lemma}\label{lem:splay}
Let $X$ and $Y$ be splayed in $V$, and let $\cL$ be a line bundle on $V$. Then 
\begin{equation}\label{eq:splayed}
s(X\cap Y, V)^\cL = c(\cL)\cap (s(X,V)^\cL \cdot s(Y,V)^\cL)
\end{equation}
in the Chow group of $X\cap Y$.
\end{lemma}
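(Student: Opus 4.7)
The plan is to derive Lemma~\ref{lem:splay} from the ordinary (untwisted) splayed-intersection formula of~\cite{1406.1182}, using only formal bookkeeping for the twist operation of Definition~\ref{def:twisSc}. In the untwisted setting, the splayedness of $X$ and $Y$ in the nonsingular $V$ yields an identity of the form
\[
s(X\cap Y,V) = s(X,V)\cdot s(Y,V),
\]
where the right-hand side is the refined intersection in $V$ of the two Segre classes (pushed forward along the inclusions $X\hookrightarrow V$, $Y\hookrightarrow V$), naturally realized as a class on $X\cap Y$. Granting this, the proof of~\eqref{eq:splayed} reduces to a line-by-line comparison of codimension-$c$ pieces in $V$, exploiting the description just after Definition~\ref{def:twisSc}: the piece of $s(Z,V)^\cL$ of codimension $c$ in $V$ is obtained from the codimension-$c$ piece of $s(Z,V)$ by capping with $c(\cL)^{-(c+1)}$.

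Concretely, I would write $s(X,V) = \sum_a \alpha_a$ and $s(Y,V)=\sum_b \beta_b$ with $\alpha_a$, $\beta_b$ of codimension $a$, $b$ in $V$ respectively. The splayed formula then decomposes $s(X\cap Y,V)$ as $\sum_c \sum_{a+b=c}\alpha_a\cdot \beta_b$, with the inner sum of codimension $c$. The twist on the left-hand side of~\eqref{eq:splayed} therefore contributes $c(\cL)^{-(c+1)}\cap \sum_{a+b=c}\alpha_a\cdot\beta_b$ in codimension $c$. On the right-hand side, $s(X,V)^\cL$ and $s(Y,V)^\cL$ contribute $c(\cL)^{-(a+1)}\cap\alpha_a$ and $c(\cL)^{-(b+1)}\cap\beta_b$, so their intersection product gives $c(\cL)^{-(a+b+2)}\cap(\alpha_a\cdot\beta_b)$ in codimension $a+b$, and the prefactor $c(\cL)$ in~\eqref{eq:splayed} raises this to $c(\cL)^{-(a+b+1)}\cap(\alpha_a\cdot\beta_b)$. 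Summing over $a+b=c$ matches the left-hand side, yielding the lemma.

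The one step requiring care is that the line-bundle Chern classes used in the twist can be freely commuted across the refined intersection product: since $c(\cL)$ is pulled back from $V$, the identity $c(\cL)\cap(\alpha_a\cdot\beta_b)=(c(\cL)\cap\alpha_a)\cdot\beta_b=\alpha_a\cdot(c(\cL)\cap\beta_b)$ follows from the projection formula applied to $X\hookrightarrow V$ and $Y\hookrightarrow V$. The genuine mathematical input is the untwisted splayed formula of~\cite{1406.1182}; everything else is formal. The main obstacle is therefore not the algebraic manipulation but the bookkeeping: one must verify that the product on the right-hand side of~\eqref{eq:splayed} is interpreted in precisely the same way as in the Aluffi--Faber formula, and confirm that, with $s(X,V)^\cL$, $s(Y,V)^\cL$, and $s(X\cap Y,V)^\cL$ regarded as classes in $A_*X$, $A_*Y$, and $A_*(X\cap Y)$, the comparison above is an equality in $A_*(X\cap Y)$ as asserted.
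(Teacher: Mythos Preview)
Your proposal is correct and follows essentially the same approach as the paper. The paper's proof invokes the untwisted splayed formula $s(X\cap Y,V)=s(X,V)\cdot s(Y,V)$ from~\cite{1406.1182} and then asserts that~\eqref{eq:splayed} ``follows immediately from this result, Definition~\ref{def:twisSc}, and the evident fact that $\otimes_V$ preserves intersection products in $V$''; your codimension-by-codimension bookkeeping is precisely an explicit unpacking of that last clause.
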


\begin{proof}
By Lemma~3.1 in~\cite{1406.1182},
\[
s(X\cap Y,V) = s(X,V)\cdot s(Y,V)\quad.
\] 
The stated formula follows immediately from this result, Definition~\ref{def:twisSc},
and the evident fact that $\otimes_V$ preserves intersection products in $V$.
\end{proof}

\subsection{}
Now assume $S$ consists of two blocks $S'$, $S''$, as in~\S\ref{ss:conessplayed}.
Since $S'\subseteq S$, we have an embedding $L_{S'}\subseteq L_{S}$, and
hence an embedding $\hL_{S'}\subseteq \hL_S$. Similarly, $\hL_{S''}\subseteq \hL_S$.
For any $Z'\subseteq \hL_{S'}$, we can consider the join $\vZ'$ of $Z'$ and 
$\hL_{S''}$ in $\hL_S$, and similarly we let $\vZ''$ be the join of $Z''$ and $\hL_{S'}$,
for every subscheme $Z''\subseteq \hL_{S''}$. 

\begin{lemma}\label{lem:blockstr}
Let $Z'=\hL'_{S'}\cap \htau_{n,k}$, $Z''=\hL'_{S''}\cap \htau_{n,k}$. Then with notation
as above:
\begin{itemize}
\item Both $\vZ'$, $\vZ''$ are zeros of sections of bundles $\cE\otimes \cO(H)$,
with $\cE$ a pull-back from $G(k,n)$;
\item $\hL_S\cap \htau_{n,k} = \vZ' \cap \vZ''$ (scheme-theoretically); and
\item $\vZ'$ and $\vZ''$ are splayed.
\end{itemize}
\end{lemma}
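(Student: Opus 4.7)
The plan is to verify the three bullets in turn, with the third (splayedness) being the main point.

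For the first bullet, I would realize $\vZ'$ as an explicit zero scheme. Let $R'$ (resp.\ $C'$) denote the rows (resp.\ columns) occupied by entries of $S'$, and similarly $R'',C''$; by hypothesis $R'\cap R''=\emptyset$ and $C'\cap C''=\emptyset$. The disjoint projective subspaces $L_{S'}$ and $L_{S''}$ are complementary in $L_S$ (since $V_S=V_{S'}\oplus V_{S''}$), so the join construction of \S\ref{ss:conessplayed} applies with $\Pbb^{M-m-1}=L_{S''}$. Corresponding to the splitting $V_S=V_{S'}\oplus V_{S''}$, the tautological matrix on $L_S$ decomposes as $\varphi=\varphi_{S'}+\varphi_{S''}$, with $\varphi_{S'}$ supported in the entries of $S'$. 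Composing the tautological inclusion $\cS\hookrightarrow \cO^n$ on $G(k,n)$ with $\varphi_{S'}$ and projecting to the $R'$ rows yields a section $\sigma'$ of $\cE'\otimes\cO(H)$ on $\hL_S$, where $\cE'=(\cSd)^{|R'|}$ is pulled back from $G(k,n)$. Since the components of $\sigma'$ are the same linear combinations of $\varphi_{ij}$-coordinates that cut out $Z'$ inside $\hL_{S'}$, its zero scheme is exactly $\vZ'$. The analogous argument applied to $S''$ handles $\vZ''$.

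For the second bullet, $\hL_S\cap \htau_{n,k}$ is cut out in $\hL_S$ by the section $\varphi\cdot K$ of $(\cSd)^n\otimes\cO(H)$ defining $\htau_{n,k}$. For $\varphi\in L_S$, rows $i\notin R_S$ contribute trivial components, and the remaining nonzero components split along $R_S=R'\sqcup R''$ into precisely $\sigma'$ and $\sigma''$; hence the ideal of $\hL_S\cap \htau_{n,k}$ equals the sum of the ideals of $\vZ'$ and $\vZ''$, yielding the scheme-theoretic equality.

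For the third bullet, I would verify splayedness at an arbitrary point $(K_0,\varphi_0)\in\vZ'\cap\vZ''$ by exhibiting splayed analytic local coordinates. Pick a set $B=\{j_1,\dots,j_k\}\subseteq\{1,\dots,n\}$ of basis columns for $K_0$, put $F=\{1,\dots,n\}\setminus B$, and use the standard Grassmann coordinates $c_{lj}$ for $l\in\{1,\dots,k\}$, $j\in F$, so that a basis of $K$ near $K_0$ is $v_l=e_{j_l}+\sum_{j\in F}c_{lj}\,e_j$. Since $L_{S'}\cap L_{S''}=\emptyset$, I may pick an affine chart of $L_S$ containing $\varphi_0$ in which the affine coordinates split cleanly into $S'$-entries and $S''$-entries. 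A direct expansion shows that the $(i,l)$-component of $\sigma'$ (for $i\in R'$) reads
\[
\varphi_{i,j_l}\,\mathbf{1}\bigl[(i,j_l)\in S'\bigr]+\sum_{\substack{j\in F\\(i,j)\in S'}}\varphi_{ij}\,c_{lj}\quad,
\]
involving only $S'$-entries of $\varphi$ and coordinates $c_{lj}$ with $j\in C'\cap F$; the analogous expression for $\sigma''$ uses only $S''$-entries and coordinates $c_{lj}$ with $j\in C''\cap F$. The disjointness $S'\cap S''=\emptyset$ and $C'\cap C''=\emptyset$ makes these two sets of local coordinates disjoint, proving splayedness at $(K_0,\varphi_0)$.

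The main obstacle is the splayedness. A natural attempt is to choose basis columns of $K_0$ outside $C'\cup C''$ so the Grassmann coordinates split perfectly, but this is obstructed whenever $K_0\cap \mathrm{span}(e_j:j\in C'\cup C'')\neq 0$, which can genuinely occur. The computation above sidesteps the issue: a basis column $j_l\in C'$ contributes only an $S'$-valued entry $\varphi_{i,j_l}$ to $\sigma'$, and since $C'\cap C''=\emptyset$ this entry cannot appear in any component of $\sigma''$. Thus splayedness holds in any admissible Grassmann chart, at every point of the intersection.
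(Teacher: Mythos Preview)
Your proof is correct and follows the same approach as the paper's: both use the block structure of the matrix $A$ to split the defining equations $A\cdot C^t=0$ of $\hL_S\cap\htau_{n,k}$ into two groups involving disjoint sets of local coordinates (the $S'$- vs.\ $S''$-entries of $\varphi$, and the Grassmann coordinates $c_{lj}$ with $j\in C'$ vs.\ $j\in C''$). Your treatment is more explicit about the bundle $\cE'=(\cSd)^{|R'|}$ and about why splayedness holds in \emph{every} admissible Grassmann chart, but the underlying idea is identical.
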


\begin{proof}
Denote elements of $\hL_S=G(k,n)\times L_S$ by pairs $(C,A)$, where $C=(c_{ij})$ is a
$k\times n$ matrix whose rows span the given element of $G(k,n)$, and $A$ is
an $n\times n$ matrix with entries in $S$. The condition that $(C,A)\in \hL_S\cap
\htau_{n,k}$ means that the row-span of $C$ is contained in the kernel of $A$;
thus, the ideal defining $\hL_S\cap \htau_{n,k}$ is generated by the entries of $A\cdot C^t$.

Under the current assumption on $S$, $A$ is a block matrix:
\[
A=\left(
\begin{array}{c|c}
A'  & 0 \\ \hline
0 & A'' 
\end{array}\right)\quad;
\]
and we can split $C^t$ horizontally into matrices $C'$, $C''$ with entries $c_{ij}$ 
with $j\in J'$, $j\in J''$ resp., so that the product $A\cdot C^t$ is in fact
\[
\left(
\begin{array}{c|c}
A'  & 0 \\ \hline
0 & A'' 
\end{array}\right)
\cdot
\left(
\begin{array}{c}
C' \\ \hline
C''
\end{array}\right)
=
\left(
\begin{array}{c}
A'\cdot C' \\ \hline
A''\cdot C''
\end{array}\right)\quad.
\]
With this notation, $\vZ'$ is defined by the vanishing of the entries of $A'\cdot C'$,
and $\vZ''$ by the vanishing of the entries of $A''\cdot C''$. 

The statement follows. The first point is clear.
For the second point, the ideal of $\hL_S\cap \htau_{n,k}$ is indeed the sum of
the ideals of $\vZ'$ and $\vZ''$. The third point holds since generators for the ideals 
of $\vZ'$, $\vZ''$ are expressed in different sets of coordinates (in every local chart).
\end{proof}

\begin{theorem}\label{thm:Sigblocks}
Assume that $S$ consists of blocks $S_1,\dots, S_m$. Then
\[
\left(1-\Sigma_{n,r,S}\right) = \prod_{i=1}^m \left(1-\Sigma_{n,r,S_i}\right)
\]
in $G(n-r,n)$.
\end{theorem}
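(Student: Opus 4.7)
The plan is to reduce the statement to the case $m=2$ by induction, so the crux is handling $S = S' \sqcup S''$ composed of two blocks with disjoint rows and columns; denote by $s'$ and $s''$ their cardinalities. First, Lemma~\ref{lem:blockstr} identifies $\hL_S \cap \htau_{n,k}$ with the scheme-theoretic intersection $\vZ' \cap \vZ''$, where $\vZ' = Z' \vee \hL_{S''}$ and $\vZ'' = Z'' \vee \hL_{S'}$ are the joins of $Z' = \hL_{S'} \cap \htau_{n,k}$ and $Z'' = \hL_{S''} \cap \htau_{n,k}$ with the complementary linear subspaces, and these are splayed in $\hL_S$. Applying Lemma~\ref{lem:splay} with $\cL = \cO(-H)$ gives
\[
s(\hL_S \cap \htau_{n,k}, \hL_S)^{\cO(-H)} = (1-H) \cap \bigl(s(\vZ', \hL_S)^{\cO(-H)} \cdot s(\vZ'', \hL_S)^{\cO(-H)}\bigr).
\]

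Next I apply the cone formula Lemma~\ref{lem:scone} to each factor. Since $L_{S''}$ has codimension $s'$ in $L_S$, the class $[\hL_{S''}]$ equals $H^{s'}$ in $A^*(\hL_S) = A^*(G(k,n))[H]/(H^s)$, and symmetrically $[\hL_{S'}] = H^{s''}$. Writing $\sigma'$ and $\sigma''$ for the join classes $s(Z', \hL_{S'})^{\cO(-H)} \vee \hL_{S''}$ and $s(Z'', \hL_{S''})^{\cO(-H)} \vee \hL_{S'}$ respectively, the lemma yields
\[
s(\vZ', \hL_S)^{\cO(-H)} = \sigma' + (1+H+H^2+\cdots)\,H^{s'},
\]
and analogously for $\vZ''$. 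A key point is that the join operation preserves the $H$-polynomial expression of a class (because $H^j \vee \hL_{S''}$ remains a codimension-$j$ linear subspace of $L_S$), so $\sigma'$ has $H$-degree at most $s'-1$ and its $H^{s'-1}$-coefficient equals $\Sigma_{n,r,S'}$, with the symmetric statement for $\sigma''$.

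The remaining step is to expand the product, multiply by $(1-H)$, and push forward by $\pi_1 \colon \hL_S \to G(k,n)$, which reads off the coefficient of $H^{s-1}$. Two identities collapse the bookkeeping: the mixed term involving $H^{s'+s''}$ vanishes because $H^s = 0$ in $A^*(\hL_S)$, and the telescoping $(1-H)(1+H+\cdots+H^{s-1}) = 1$ turns $(1-H)(1+H+H^2+\cdots)H^{s'}$ into simply $H^{s'}$, and similarly for $H^{s''}$. After these simplifications the push-forward produces $\Sigma_{n,r,S'} + \Sigma_{n,r,S''}$ from the pieces $\sigma' H^{s''}$ and $\sigma'' H^{s'}$, and $-\Sigma_{n,r,S'}\Sigma_{n,r,S''}$ from the surviving $-H\sigma'\sigma''$ contribution (noting that $\sigma'\sigma''$ itself has $H$-degree at most $s-2$). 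Combining yields $1 - \Sigma_{n,r,S} = (1-\Sigma_{n,r,S'})(1-\Sigma_{n,r,S''})$; grouping $S = S_1 \sqcup (S_2 \sqcup \cdots \sqcup S_m)$ and iterating completes the general case.

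The principal obstacle I anticipate is the bookkeeping connecting the join operation of Lemma~\ref{lem:scone} with the Chow-ring presentation of $\hL_S$: one needs to be sure that a class from $\hL_{S'}$, once joined with $\hL_{S''}$ and realized in $A^*(\hL_S)$, retains its polynomial expression in $H$ and its top coefficient, so that the push-forward after multiplication by $H^{s''}$ still recovers $\Sigma_{n,r,S'}$. Everything else is routine manipulation of the twisted Segre class, but this geometric content of the cone formula is essential to identify the terms correctly.
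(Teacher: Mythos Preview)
Your proof is correct and follows essentially the same route as the paper's own argument: reduce to two blocks, invoke Lemma~\ref{lem:blockstr} and Lemma~\ref{lem:splay} to write the twisted Segre class as $(1-H)$ times a product, apply Lemma~\ref{lem:scone} to each factor, and extract the coefficient of $H^{s-1}$. The only cosmetic difference is that you use the relation $H^s=0$ in $A^*(\hL_S)$ to kill the $H^{s'+s''}$ term, whereas the paper simply reads off the $H^{s-1}$ coefficient of a formal expression; both amount to the same computation.
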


\begin{proof}
By an immediate induction it suffices to prove the case $m=2$, so we may assume
$S$ is the union of two blocks $S'$, $S''$ as above. The needed formula follows then
from Lemma~\ref{lem:blockstr} and the technical 
Lemmas~\ref{lem:scone} and~\ref{lem:splay}. Indeed, since $\hL_S\cap \htau_{n,k}$
is the splayed intersection of $\vZ'$ and $\vZ''$,
\[
s(\hL_S\cap \htau_{n,k},\hL_S)^{\cO(-H)} = (1-H)\cdot s(\vZ',\hL_S)^{\cO(-H)}\cdot 
s(\vZ'',\hL_S)^{\cO(-H)}
\]
by Lemma~\ref{lem:splay};
and $s(\vZ',\hL_S)^{\cO(-H)}$, $s(\vZ'',\hL_S)^{\cO(-H)}$ may be expressed in terms
of joins by using Lemma~\ref{lem:scone}, so that
\begin{multline*}
s(\hL_S\cap \htau_{n,k},\hL_S)^{\cO(-H)} 
=(1-H)\left(s(Z',\hL_{S'})^{\cO(-H)}\vee [\hL_{S''}]+(1-H)^{-1}\cap [\hL_{S''}]\right)\\
\cdot \left(s(Z'',\hL_{S''})^{\cO(-H)}\vee [\hL_{S'}]+(1-H)^{-1}\cap [\hL_{S'}]\right)\quad.
\end{multline*}
By definition of Grassmann class of a set of entries (Definition~\ref{def:GrassS}),
\[
s(Z',\hL_{S'})^{\cO(-H)} = \left(\alpha'_0 + \alpha'_1 H+\cdots + \alpha'_{s'-2} H^{s'-2} 
+ \Sigma_{n,r,S'} H^{s'-1}\right)\cap [\hL_{S'}]
\]
where $s'$ is the size of $S'$ and the $\alpha'_i$ are (pull-backs of) classes in $G(k,n)$;
and similarly for~$Z''$. These same expressions hold for the joins (but capping against
$[\hL_S]$). The Grassmann class $\Sigma_{n,r,S}$ is therefore given by the coefficient of 
$H^{s'+s''-1}$ in
\[
(1-H)\left(\alpha'_0+\cdots+ \Sigma_{n,r,S'} H^{s'-1} + \frac{H^{s'}}{1-H}\right)
\cdot \left(\alpha''_0+\cdots+ \Sigma_{n,r,S''} H^{s''-1} + \frac{H^{s''}}{1-H}\right)\quad.
\]
This is
$-\Sigma_{n,r,S'}\Sigma_{n,r,S''} + \Sigma_{n,r,S'} + \Sigma_{n,r,S''}$,
and the statement follows immediately.
\end{proof}

\subsection{}\label{ss:manyrows1}
As an illustration of the use of Theorem~\ref{thm:Sigblocks}, we consider the case in
which {\em no two of the $s$ entries in $S$ are in the same column.\/}
\begin{figure}
\begin{tikzpicture}[baseline=(current  bounding  box.center)]
\path [fill=red] (0,0) rectangle (.5,-.5);
\path [fill=red] (1,0) rectangle (1.5,-.5);
\path [fill=red] (3,0) rectangle (3.5,-.5);
\path [fill=red] (1.5,-1) rectangle (2,-1.5);
\path [fill=red] (2,-1.5) rectangle (2.5,-2);
\path [fill=red] (2.5,-1.5) rectangle (3,-2);
\path [fill=red] (.5,-2.5) rectangle (1,-3);
\draw (0,0) --(0,-3.5);
\draw (.5,0) --(.5,-3.5);
\draw (1,0) --(1,-3.5);
\draw (1.5,0) --(1.5,-3.5);
\draw (2,0) --(2,-3.5);
\draw (2.5,0) --(2.5,-3.5);
\draw (3,0) --(3,-3.5);
\draw (3.5,0) --(3.5,-3.5);
\draw (0,0) --(3.5,0);
\draw (0,-.5) --(3.5,-.5);
\draw (0,-1) --(3.5,-1);
\draw (0,-1.5) --(3.5,-1.5);
\draw (0,-2) --(3.5,-2);
\draw (0,-2.5) --(3.5,-2.5);
\draw (0,-3) --(3.5,-3);
\draw (0,-3.5) --(3.5,-3.5);
\end{tikzpicture}
\caption{}
\label{fig:d7r3211}
\end{figure}
We can describe $S$ as the union of the subsets~$S_i$ of entries in the $i$-th
row of the matrix; and we let $\ell_i$ be the cardinality of $S_i$. Thus $S$
consists of $\ell_1$ entries in the first row, $\ell_2$ entries in the second row, etc.,
subject to the condition that no two entries are in the same column. 
We have $\ell_1+\cdots+\ell_n=s\le n$.
The number $d_{n,r,S}$ clearly only depends on the numbers $\ell_1,\dots, \ell_n$,
not on the specific set $S$ of entries; so we can use the notation $d_{n,r,S}
=d_{n,r|\ell_1,\ell_2,\dots}$. Since permutations of the $\ell_i$'s do not affect the degree 
$d_{r,n,S}$, we can in fact list the $\ell_i$'s in any order and omit trailing 
zeros. Thus, this notation is compatible with the notation used in~\S\ref{singlerow}.
The degree for rank $r$ and the configuration shown in Figure~\ref{fig:d7r3211} can
be denoted $d_{7,r|3,2,1,1}$.

Of course, up to a permutation of the columns and rows, such a configuration consists of
blocks:
\begin{center}
\begin{tikzpicture}[baseline=(current  bounding  box.center)]
\path [fill=red] (0,0) rectangle (1.5,-.5);
\path [fill=red] (1.5,-.5) rectangle (2.5,-1);
\path [fill=red] (2.5,-1) rectangle (3,-1.5);
\path [fill=red] (3,-1.5) rectangle (3.5,-2);
\draw (0,0) --(0,-3.5);
\draw (.5,0) --(.5,-3.5);
\draw (1,0) --(1,-3.5);
\draw (1.5,0) --(1.5,-3.5);
\draw (2,0) --(2,-3.5);
\draw (2.5,0) --(2.5,-3.5);
\draw (3,0) --(3,-3.5);
\draw (3.5,0) --(3.5,-3.5);
\draw (0,0) --(3.5,0);
\draw (0,-.5) --(3.5,-.5);
\draw (0,-1) --(3.5,-1);
\draw (0,-1.5) --(3.5,-1.5);
\draw (0,-2) --(3.5,-2);
\draw (0,-2.5) --(3.5,-2.5);
\draw (0,-3) --(3.5,-3);
\draw (0,-3.5) --(3.5,-3.5);
\end{tikzpicture}
\end{center}
so that this case is covered by Theorem~\ref{thm:Sigblocks} and the computation
for individual rows carried out in~\S\ref{ss:singlerowlocus} and~\S\ref{ss:onerowres}.

\begin{corol}\label{cor:Sigmanyrows}
For $S$ described by $(\ell_1,\dots, \ell_n)$ as above,
\[
1-\Sigma_{n,r,S} = \prod_{i=1}^n
c(\cQd)  \left( c_0(\cSd) + \cdots + c_{k-\ell_i}(\cSd)\right)\quad.
\]
\end{corol}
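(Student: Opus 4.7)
The plan is to combine the multiplicativity of Grassmann classes across blocks (Theorem~\ref{thm:Sigblocks}) with the single-row formula from Corollary~\ref{cor:SigmaSrow}. The key observation is that under the hypothesis that no two entries of $S$ share a column, the decomposition of $S$ into its $n$ row-slices $S_i$ (with $|S_i|=\ell_i$) exhibits $S$ as a disjoint union of blocks in the sense of~\S\ref{ss:conessplayed}: distinct row-slices $S_i$, $S_j$ clearly share no row (by definition) and no column (by hypothesis). Up to a harmless permutation of rows and columns (Remark~\ref{rem:perm}), Theorem~\ref{thm:Sigblocks} then applies and gives
\[
1 - \Sigma_{n,r,S} \;=\; \prod_{i=1}^n \bigl(1 - \Sigma_{n,r,S_i}\bigr),
\]
where row indices with $\ell_i = 0$ correspond to empty blocks contributing the trivial factor $1$.

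Next, for each nonempty $S_i$, Corollary~\ref{cor:SigmaSrow} gives
\[
\Sigma_{n,r,S_i} \;=\; c(\cQd)\bigl(c_{k-\ell_i+1}(\cSd) + \cdots + c_{k}(\cSd)\bigr),
\]
with $k = n-r$. The Whitney formula applied to the dual tautological sequence on $G(k,n)$ yields $c(\cSd)\, c(\cQd) = 1$, which when expanded reads $1 = c(\cQd)\bigl(c_0(\cSd) + c_1(\cSd) + \cdots + c_k(\cSd)\bigr)$. Subtracting the expression for $\Sigma_{n,r,S_i}$ from this identity gives
\[
1 - \Sigma_{n,r,S_i} \;=\; c(\cQd)\bigl(c_0(\cSd) + \cdots + c_{k-\ell_i}(\cSd)\bigr),
\]
and this formula is also consistent with the trivial factor in the case $\ell_i = 0$, where the sum inside the parentheses recovers all of $c(\cSd)$. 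Plugging this into the product from the previous step yields the claimed formula.

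I do not expect a serious obstacle here: once one recognizes that the row-slices are genuinely splayed blocks, the corollary reduces to a direct substitution, the only arithmetic being the Whitney-formula manipulation that turns the "top" truncation $c_{k-\ell+1}+\cdots+c_k$ of $c(\cSd)$ into the complementary "bottom" truncation $c_0+\cdots+c_{k-\ell}$. The mild subtlety is bookkeeping for empty rows, which is handled uniformly by the identity $c(\cQd)\, c(\cSd) = 1$.
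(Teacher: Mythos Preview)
Your proof is correct and follows essentially the same approach as the paper: invoke Theorem~\ref{thm:Sigblocks} on the row-slice decomposition, apply Corollary~\ref{cor:SigmaSrow} to each nonempty row, and use the Whitney identity $c(\cQd)\,c(\cSd)=1$ to rewrite $1-\Sigma_{n,r,S_i}$ and to handle empty rows uniformly. The paper's argument is stated more tersely, but the content is identical.
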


This follows immediately from~Theorem~\ref{thm:Sigblocks} and
Corollary~\ref{cor:SigmaSrow}. Note that for $\ell_i = 0$, the contribution is
\[
c(\cQd)  \left( c_0(\cSd) + \cdots + c_k(\cSd)\right)
=c(\cQd) c(\cSd) = 1\quad,
\]
so that such rows do not affect the Grassmann class (as should be expected).

\begin{theorem}\label{thm:nocol}
\begin{equation}\label{eq:nocol}
d_{n,r|\ell_1,\dots,\ell_n} = \int_{G(n-r,n)} \prod_{i=1}^n \big(
c_0(\cSd)+ \cdots + c_{n-r-\ell_i}(\cSd)
\big)\quad.
\end{equation}
\end{theorem}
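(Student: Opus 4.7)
The plan is to observe that this theorem is essentially the synthesis of two results established immediately beforehand: Theorem~\ref{thm:mainred} which reduces $d_{n,r,S}$ to an intersection product on $G(n-r,n)$ against $c(\cSd)^n$ weighted by $(1-\Sigma_{n,r,S})$, and Corollary~\ref{cor:Sigmanyrows} which gives an explicit factored expression for $1-\Sigma_{n,r,S}$ in precisely this "no two entries in the same column" case. So the main work has already been done, and what remains is a short manipulation in the Chow ring of the Grassmannian.

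First, I would apply Theorem~\ref{thm:mainred} to write
\[
d_{n,r|\ell_1,\dots,\ell_n} = \int_{G(n-r,n)} c(\cSd)^n \cap (1-\Sigma_{n,r,S})\, ,
\]
then substitute the expression from Corollary~\ref{cor:Sigmanyrows}:
\[
1-\Sigma_{n,r,S} = \prod_{i=1}^n c(\cQd)\bigl(c_0(\cSd)+\cdots+c_{n-r-\ell_i}(\cSd)\bigr)\, .
\]
The key simplification is that the Whitney formula applied to the tautological sequence $0\to \cS \to \cO^n \to \cQ\to 0$ gives $c(\cS) c(\cQ)=1$, hence $c(\cSd)c(\cQd)=1$ as well. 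Thus the $n$ factors of $c(\cQd)$ arising from the product over rows combine with $c(\cSd)^n$ to produce the trivial class, leaving exactly
\[
d_{n,r|\ell_1,\dots,\ell_n} = \int_{G(n-r,n)} \prod_{i=1}^n\bigl(c_0(\cSd)+\cdots+c_{n-r-\ell_i}(\cSd)\bigr)\, ,
\]
as claimed.

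The only thing to double-check is that the setup really applies: the configuration $S$ with at most one entry per column is, up to the row/column permutations allowed by Remark~\ref{rem:perm}, a disjoint union of single-row blocks (each row $i$ contributing a block of $\ell_i$ adjacent entries, with the blocks placed in pairwise disjoint rows and columns). This makes it a genuine block decomposition in the sense of \S\ref{ss:conessplayed}, so Theorem~\ref{thm:Sigblocks} legitimately applies factor by factor, and the single-row factor is exactly the one supplied by Corollary~\ref{cor:SigmaSrow}. The main obstacle was never in this theorem itself but in the two ingredients it rests upon --- the multiplicativity of $(1-\Sigma_{n,r,S})$ over blocks and the row computation --- so here there is essentially no work left beyond recording the cancellation $c(\cSd)\,c(\cQd)=1$.
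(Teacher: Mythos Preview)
Your proof is correct and matches the paper's approach exactly: the paper's proof is the single sentence ``This now follows immediately from Theorem~\ref{thm:mainred} and Corollary~\ref{cor:Sigmanyrows},'' and you have simply spelled out the one substantive step left implicit there, namely the cancellation $c(\cSd)\,c(\cQd)=1$ coming from the dual tautological sequence. Your remark that rows with $\ell_i=0$ contribute trivially (so the product can be taken over all $n$ rows) is also consistent with the paper's comment following Corollary~\ref{cor:Sigmanyrows}.
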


This statement generalizes directly the case in which $S$ is concentrated in one row,
i.e., Theorem~\ref{thm:onerow}. (Indeed, factors corresponding to $\ell_i=0$ reproduce
$c(\cSd)$.) Note that the pretty multiplicative structure underlying~\eqref{eq:nocol}
is invisible at the level of the degrees $d_{n,r|\ell_1,\dots,\ell_n}$. 

\begin{proof}
This now follows immediately from Theorem~\ref{thm:mainred} and 
Corollary~\ref{cor:Sigmanyrows}.
\end{proof}

It is straightforward to adapt the Macaulay2 script given in~\S\ref{ss:onerowres}
and implement the formula~\eqref{eq:nocol} given in Theorem~\ref{thm:nocol}:
\begin{verbatim}
needsPackage("Schubert2")
manyrows = (n,r,li) -> (
    m=#li;
    G = flagBundle({n-r,r});
    (S,Q) = bundles G;
    Sd = dual S;
    integral(chern(Sd)^(n-m)*product(0..m-1,
            j-> sum(0..n-r-li_j, i-> chern(i,Sd))))
)
\end{verbatim}
For example, \begin{verbatim} manyrows(7,3,{3,2,1,1})\end{verbatim} returns~$5990$,
the degree of the projection of the locus of $7\times 7$ matrices of rank $\le 3$ from 
the set $S$ depicted in Figure~\ref{fig:d7r3211}. This script took 0.023 seconds 
using our computing equipment to compile the data for all ranks for this set $S$:
\[
\begin{tikzpicture}[baseline=(current  bounding  box.center)]
\path [fill=red] (0,0) rectangle (.5,-.5);
\path [fill=red] (1,0) rectangle (1.5,-.5);
\path [fill=red] (3,0) rectangle (3.5,-.5);
\path [fill=red] (1.5,-1) rectangle (2,-1.5);
\path [fill=red] (2,-1.5) rectangle (2.5,-2);
\path [fill=red] (2.5,-1.5) rectangle (3,-2);
\path [fill=red] (.5,-2.5) rectangle (1,-3);
\draw (0,0) --(0,-3.5);
\draw (.5,0) --(.5,-3.5);
\draw (1,0) --(1,-3.5);
\draw (1.5,0) --(1.5,-3.5);
\draw (2,0) --(2,-3.5);
\draw (2.5,0) --(2.5,-3.5);
\draw (3,0) --(3,-3.5);
\draw (3.5,0) --(3.5,-3.5);
\draw (0,0) --(3.5,0);
\draw (0,-.5) --(3.5,-.5);
\draw (0,-1) --(3.5,-1);
\draw (0,-1.5) --(3.5,-1.5);
\draw (0,-2) --(3.5,-2);
\draw (0,-2.5) --(3.5,-2.5);
\draw (0,-3) --(3.5,-3);
\draw (0,-3.5) --(3.5,-3.5);
\end{tikzpicture}
\quad\quad\quad\quad
d_{7,r|3,2,1,1}=\begin{cases}
887 & r=1 \\
13957 & r=2 \\
5990 & r=3 \\
35 & r=4 \\
0 & r=5 \\
0 & r=6 \\
0 & r=7
\end{cases}
\]

\subsection{}\label{ss:manyrows2}
It is clear from~\eqref{eq:nocol} that $d_{n,r|\ell_1,\dots,\ell_n}=0$ unless $\ell_i\le n-r$
for all $i$. (Geometrically, this signals that the projection contracts the locus, 
cf.~Remark~\ref{rem:deq0}.) We do not have a (conjectural) formula such 
as~\eqref{eq:exponerow} computing $d_{n,r,S}$ as a combination of binomial coefficients
in the more general case considered 
here; it is probably possible to obtain such a formula, or at least to conjecture
one based on numerical experimentation.
We will only deal explicitly with the case in which {\em no two elements of $S$ lie in the same
row or column,\/} i.e., all $\ell_i=0$ or $1$. Up to a permutation of rows and column, $S$ 
may be realized as a set of $s$ diagonal entries. 
\begin{center}
\begin{tikzpicture}[baseline=(current  bounding  box.center)]
\path [fill=red] (0,0) rectangle (.5,-.5);
\path [fill=red] (.5,-.5) rectangle (1,-1);
\path [fill=red] (1,-1) rectangle (1.5,-1.5);
\path [fill=red] (1.5,-1.5) rectangle (2,-2);
\draw (0,0) --(0,-3);
\draw (.5,0) --(.5,-3);
\draw (1,0) --(1,-3);
\draw (1.5,0) --(1.5,-3);
\draw (2,0) --(2,-3);
\draw (2.5,0) --(2.5,-3);
\draw (3,0) --(3,-3);
\draw (0,0) --(3,0);
\draw (0,-.5) --(3,-.5);
\draw (0,-1) --(3,-1);
\draw (0,-1.5) --(3,-1.5);
\draw (0,-2) --(3,-2);
\draw (0,-2.5) --(3,-2.5);
\draw (0,-3) --(3,-3);
\end{tikzpicture}
\end{center}
The following result will also answer a question raised in~\cite{arXiv1310.1362}.

\begin{theorem}\label{thm:degdia}
For $s=1,\dots, n$,
\[
d_{n,r|1^s} = \sum_{j=0}^s \binom sj (-1)^j \deg \sigma_{n-j,r-j}
=\sum_{j=0}^s \binom sj (-1)^j \prod_{i=0}^{n-r-1} \frac{\binom{n-j+i}{n-r}}{\binom{n-r+i}{n-r}}
\]
\end{theorem}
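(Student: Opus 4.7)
The plan is to apply Theorem~\ref{thm:nocol} to the configuration where $s$ of the row-counts $\ell_i$ equal $1$ and the remaining $n-s$ equal $0$, then massage the resulting integral into a sum of degrees of smaller rank loci.

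First I would substitute the values $\ell_i\in\{0,1\}$ into the product formula~\eqref{eq:nocol}. A factor corresponding to $\ell_i=0$ contributes the full total Chern class $c(\cSd)$, while a factor with $\ell_i=1$ contributes $c_0(\cSd)+\cdots+c_{k-1}(\cSd) = c(\cSd)-c_k(\cSd)$, where $k=n-r$. Hence
\[
d_{n,r|1^s} = \int_{G(k,n)} c(\cSd)^{n-s}\bigl(c(\cSd)-c_k(\cSd)\bigr)^s.
\]
Expanding the $s$-th power by the binomial theorem and pulling out the constants reduces the claim to evaluating, for each $j=0,\dots,s$, the intersection number
\[
I_j := \int_{G(k,n)} c(\cSd)^{n-j}\, c_k(\cSd)^j.
\]

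Next I would show that $I_j = \deg\sigma_{n-j,r-j}$ by an iterative reduction to a smaller Grassmannian. The key fact is that $c_k(\cSd)$ is the top Chern class of $\cSd$ on $G(k,n)$, and hence is represented by the zero-scheme of a general section of $\cSd$; choosing the section corresponding to a fixed hyperplane $\kappa^{n-1}\subset\kappa^n$ realizes this zero-scheme as the Schubert subvariety $G(k,n-1)\hookrightarrow G(k,n)$, on which $\cSd$ restricts to the tautological dual on $G(k,n-1)$. Applying this $j$ times (and the projection formula at each stage) gives
\[
I_j = \int_{G(k,n-j)} c(\cSd)^{n-j},
\]
and by Lemma~\ref{lem:degcom}, since $(n-j)-k = r-j$, this equals $\deg\sigma_{n-j,r-j}$. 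Substituting $I_j$ back into the binomial expansion yields the first equality in the theorem; the second equality is then merely~\eqref{eq:degtau} applied to $\sigma_{n-j,r-j}$.

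The main obstacle I anticipate is the iterative reduction step, where one must verify that replacing $c_k(\cSd)$ by the class of $G(k,n-1)$ behaves well under the remaining factors $c(\cSd)$. This is a standard excess-free regular embedding argument—the section of $\cSd$ is regular on $G(k,n)$, so one can apply the self-intersection / projection formula repeatedly—but care must be taken that at each stage $\cSd$ restricts correctly to the tautological dual on the nested Grassmannian, so that Lemma~\ref{lem:degcom} can be invoked in its cleanest form. Everything else is formal binomial bookkeeping.
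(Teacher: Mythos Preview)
Your proposal is correct and follows essentially the same route as the paper: apply Theorem~\ref{thm:nocol} with $\ell_i\in\{0,1\}$, expand $(c(\cSd)-c_k(\cSd))^s$ binomially, and identify $\int_{G(k,n)} c(\cSd)^{n-j} c_k(\cSd)^j$ with $\int_{G(k,n-j)} c(\cSd)^{n-j}=\deg\sigma_{n-j,r-j}$ via Lemma~\ref{lem:degcom}. The paper states the identification $c_k(\cSd)^j=[G(k,n-j)]$ in one line, whereas you spell out the iterated regular-section argument, but the content is the same.
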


\begin{proof}
We have $\ell_i=1$ for $s$ rows, and $\ell_i=0$ for the remaining $n-s$ rows.
According to Theorem~\ref{thm:nocol},
\begin{align*}
d_{n,r|1^s}&=\int_{G(n-r,n)} c(\cSd)^{n-s} \big(c(\cSd)-c_k(\cSd)\big)^s \\
&=\sum_{j=0}^s \binom sj (-1)^j \int_{G(n-r,n)} c(\cSd)^{n-j} c_k(\cSd)^j\quad.
\end{align*}
Since $c_k(\cSd)^j$ is the class of $G(n-r,n-j)$ in $G(n-r,n)$, this gives
\[
d_{n,r|1^s}=\sum_{j=0}^s \binom sj (-1)^j \int_{G(n-r,n-j)} c(\cSd)^{n-j}
=\sum_{j=0}^s \binom sj (-1)^j \deg\sigma_{n-j,r-j}\quad,
\]
by Lemma~\ref{lem:degcom}.
\end{proof}

Theorem~\ref{thm:degdia} confirms the equality stated in Theorem~5.4.7 
of~\cite{arXiv1310.1362}. In this statement, the equality is conditional on (and in fact
proven to be equivalent to) the following conjectural statement. For $x\in S$, 
let $S'=S\smallsetminus \{x\}$. Denote by $TC_xZ$ the tangent cone to $Z$ at $x$.
Conjecture~5.4.6 in~\cite{arXiv1310.1362} states:
\begin{quote}
Let $S$ be such that no two elements of $S$ lie in the same row or column and 
let $x \in S$. Then $TC_xJ(\sigma_{n,r},L_S) = J(TC_x \sigma_{n,r},L_{S'})$.
\end{quote}
(Here $J$ denotes join.)
As Theorem~\ref{thm:degdia} verifies the equality in Theorem~5.4.7 of~{\it loc.~cit.}, 
it implies this statement.

\begin{example}\label{ex:maxdia}
If $S$ consists of $s=(n-r)^2$ diagonal elements, then the projection of $\sigma_{n,r}$ 
from $L_S$ is dominant onto $\Pbb^{n^2-s-1}$. The projection is generically $d$-to-$1$, 
with
\[
d=d_{n,r|1^{(n-r)^2}}
=\sum_{j=0}^{(n-r)^2} \binom {(n-r)^2}j (-1)^j \prod_{i=0}^{n-r-1} 
\frac{\binom{n-j+i}{n-r}}{\binom{n-r+i}{n-r}}
\quad.
\]
Thus
\[
d=1,2,42, 24024, 701149020, \dots \quad\quad s\ge 1
\]
for $s=(n-r)^2$. Remarkably, this degree appears to be independent of $n$, for $n\ge s^2$:
for example, the projections of both $\sigma_{25,20}$ and $\sigma_{125,120}$ are
generically $701149020$-to-$1$ onto $\Pbb^{599}$, $\Pbb^{15599}$, respectively.
(We do not have a conceptual explanation for this experimental observation.)
\qede\end{example}

\subsection{}
Assume next that {\em no element of $S$ is on {\em both\/} the same row {\em and\/} the 
same column as other elements of $S$.\/} For example:
\begin{center}
\begin{tikzpicture}[baseline=(current  bounding  box.center)]
\path [fill=red] (.5,-1) rectangle (1,-1.5);
\path [fill=red] (2.5,-1) rectangle (3,-1.5);
\path [fill=red] (1.5,-1.5) rectangle (2,-2);
\path [fill=red] (1.5,-2.5) rectangle (2,-3);
\path [fill=red] (1,0) rectangle (1.5,-.5);
\path [fill=red] (0,0) rectangle (.5,-.5);
\path [fill=red] (2,-.5) rectangle (2.5,-1);
\path [fill=red] (2,-2) rectangle (2.5,-2.5);
\draw (0,0) --(0,-3);
\draw (.5,0) --(.5,-3);
\draw (1,0) --(1,-3);
\draw (1.5,0) --(1.5,-3);
\draw (2,0) --(2,-3);
\draw (2.5,0) --(2.5,-3);
\draw (3,0) --(3,-3);
\draw (0,0) --(3,0);
\draw (0,-.5) --(3,-.5);
\draw (0,-1) --(3,-1);
\draw (0,-1.5) --(3,-1.5);
\draw (0,-2) --(3,-2);
\draw (0,-2.5) --(3,-2.5);
\draw (0,-3) --(3,-3);
\end{tikzpicture}
\end{center}
The degrees $d_{n,r,S}$ may be computed for such configurations by using 
Theorem~\ref{thm:Sigblocks}. Indeed, up to permutations of rows and columns,
$S$ may be described as a block matrix with blocks consisting of subsets of
rows or subsets of columns. We describe $S$ by the lists $(\ell_1,\ell_2,\dots)$
of lengths of rows and $(m_1,m_2,\dots)$ of lengths of columns, and write
$d_{n,r| \ell_1,\dots | m_1,\dots}$ for the corresponding degrees. For example,
the degrees for the configuration shown above would  be denoted $d_{6,r|2,2|2,2}$.

\begin{theorem}\label{thm:mix}
\begin{multline*}
d_{n,r|\ell_1,\dots,\ell_a|m_1,\dots,m_b} = \int_{G(n-r,n)} c(\cSd)^{n-a} \\
\cdot \prod_{j=1}^a \left( \sum_{i=0}^{n-r-\ell_j}c_i(\cSd)\right)
\cdot\prod_{j=1}^b \left(1 - 
\sum_{i=0}^r \binom{m_j-1+n-r+i}{m_j-1} c_i(\cQd) c_{n-r}(\cSd)\right)
\quad.
\end{multline*}
\end{theorem}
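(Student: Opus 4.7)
The plan is to realize Theorem~\ref{thm:mix} as a direct consequence of the block-multiplicativity result Theorem~\ref{thm:Sigblocks} combined with the two single-block Grassmann class computations obtained in Corollary~\ref{cor:SigmaSrow} and Corollary~\ref{cor:SigmaScol}, and then apply the master formula Theorem~\ref{thm:mainred}. First I would verify that the hypothesis of Theorem~\ref{thm:Sigblocks} applies: since no element of $S$ lies on both the same row and the same column as another element of $S$, after a permutation of rows and columns (which does not affect $d_{n,r,S}$, cf.~Remark~\ref{rem:perm}) the configuration decomposes as a disjoint union of $a$ row-blocks (of sizes $\ell_1,\dots,\ell_a$) and $b$ column-blocks (of sizes $m_1,\dots,m_b$), with no two blocks sharing any row or column. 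Hence Theorem~\ref{thm:Sigblocks} gives
\[
1-\Sigma_{n,r,S} \;=\; \prod_{j=1}^{a}\bigl(1-\Sigma_{n,r,S^{\text{row}}_j}\bigr)\cdot\prod_{j=1}^{b}\bigl(1-\Sigma_{n,r,S^{\text{col}}_j}\bigr) .
\]

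Next I would plug in the two template formulas. For a row-block of length $\ell_j$, Corollary~\ref{cor:SigmaSrow} (together with the identity $c(\cQd)c(\cSd)=1$ used already in the proof of Theorem~\ref{thm:onerow}) yields
\[
1-\Sigma_{n,r,S^{\text{row}}_j} \;=\; c(\cSd)^{-1}\Bigl(\,\sum_{i=0}^{n-r-\ell_j} c_i(\cSd)\Bigr) .
\]
For a column-block of length $m_j$, Corollary~\ref{cor:SigmaScol} gives
\[
1-\Sigma_{n,r,S^{\text{col}}_j} \;=\; 1-\sum_{i=0}^{r}\binom{m_j-1+n-r+i}{m_j-1}\, c_i(\cQd)\, c_{n-r}(\cSd) .
\]
Multiplying the $a$ row contributions produces a factor $c(\cSd)^{-a}$ in front of the product of partial Chern sums.

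Finally, inserting this expression into Theorem~\ref{thm:mainred} converts the factor $c(\cSd)^n$ times $c(\cSd)^{-a}$ into $c(\cSd)^{n-a}$, leaving exactly the right-hand side of the statement. Since each step is a straightforward application of results already established earlier in the paper, I do not expect a genuine obstacle; the only point that deserves care is the verification that the blocks are in the sense required by \S\ref{ss:conessplayed} (no overlap of rows or columns), which is guaranteed by the hypothesis that no element of $S$ shares both a row and a column with another element. This also reassures that the statement is a common generalization of Theorem~\ref{thm:nocol} (the case $b=0$) and of Theorem~\ref{thm:onecol} (the case $a=0$, $b=1$).
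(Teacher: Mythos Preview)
Your proposal is correct and follows precisely the approach the paper intends: the paper states Theorem~\ref{thm:mix} without an explicit proof, having just remarked that such configurations are handled by Theorem~\ref{thm:Sigblocks}, and your argument supplies exactly those details by combining Theorem~\ref{thm:Sigblocks} with Corollaries~\ref{cor:SigmaSrow} and~\ref{cor:SigmaScol} and then invoking Theorem~\ref{thm:mainred}. The simplification of the row factors via $c(\cQd)c(\cSd)=1$ is the same manipulation used in the proof of Theorem~\ref{thm:onerow}, so everything lines up with the paper's development.
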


\begin{example}\label{ex:mixed}
For the example shown above:
\[
\begin{tikzpicture}[baseline=(current  bounding  box.center)]
\path [fill=red] (.5,-1) rectangle (1,-1.5);
\path [fill=red] (2.5,-1) rectangle (3,-1.5);
\path [fill=red] (1.5,-1.5) rectangle (2,-2);
\path [fill=red] (1.5,-2.5) rectangle (2,-3);
\path [fill=red] (1,0) rectangle (1.5,-.5);
\path [fill=red] (0,0) rectangle (.5,-.5);
\path [fill=red] (2,-.5) rectangle (2.5,-1);
\path [fill=red] (2,-2) rectangle (2.5,-2.5);
\draw (0,0) --(0,-3);
\draw (.5,0) --(.5,-3);
\draw (1,0) --(1,-3);
\draw (1.5,0) --(1.5,-3);
\draw (2,0) --(2,-3);
\draw (2.5,0) --(2.5,-3);
\draw (3,0) --(3,-3);
\draw (0,0) --(3,0);
\draw (0,-.5) --(3,-.5);
\draw (0,-1) --(3,-1);
\draw (0,-1.5) --(3,-1.5);
\draw (0,-2) --(3,-2);
\draw (0,-2.5) --(3,-2.5);
\draw (0,-3) --(3,-3);
\end{tikzpicture}
\quad\quad\quad\quad
d_{6,r|2,2|2,2}=\begin{cases}
228 & r=1 \\
734 & r=2 \\
8 & r=3 \\
0 & r=4 \\
0 & r=5 \\
0 & r=6 
\end{cases}
\]
An explicit Macaulay2 computation shows that the projection of $\sigma_{6,3}$ 
($\dim_{6,3}=6^2-3^2-1=26$) from $L_S$ is a hypersurface of degree $4$ in 
$\Pbb^{27}$. Since $d_{6,r|2,2|2,2}=8$, the projection map must be
generically $2$-to-$1$ in this case.
\qede\end{example}

\subsection{}\label{ss:benzene2}
As in the case in which $S$ is concentrated in one row (cf.~\S\ref{ss:benzene}), 
the numbers $d_{n,r,S}$ computed in this section may have significance in other 
contexts, and it would be interesting to explore conceptual connections suggested
by these apparent coincidences. For example, the numbers
\[
d_{n,n-3|1^5} = 85, 295, 771, 1681, 3235, 5685, 9325, \dots \quad\quad (n\ge 5)
\]
form the {\em Ehrhart series for the matroid $K_4$\/} according to~\cite{oeis}.
The numbers $d_{n,n-3|1,1}$, $d_{n,n-3|1,2}$, $d_{n,n-3|2,2}$, etc.~all appear 
to be Kekul\'e numbers for certain benzenoids (\S\ref{ss:benzene}). For
example, 
\[
d_{n,n-3|2,3}=25, 65, 140, 266, 462, 750, 1155,\dots \quad\quad (n\ge 5)
\]
reproduce the Kekul\'e numbers for benzenoid hydrocarbons in the {\em chevron\/} 
configuration $Ch(2,3,n-3)$, cf.~\cite{benzenoids1988kekule}, p.~166.
The numbers
\[
d_{n,n-3|2|2} = 12, 60, 200, 525, 1176, 2352, 4320,\dots\quad\quad (n\ge 4)
\]
and
\[
d_{n,n-3|2|3} = 6, 20, 50, 105, 196, 336, 540, \dots \quad\quad (n\ge 4)
\]
are likewise Kekul\'e numbers (\cite{benzenoids1988kekule}, p.~233 \#11 and p.~165 I1).


\section{Increasing the complexity of $S$}\label{mixing}

\subsection{}
The cases studied in~\S\ref{ss:manyrows1} and~ff.~are still very special 
from the point of view of the initial motivation mentioned in the introduction. 
The smallest $S$ not covered by these cases is of the following type:
\begin{center}
\begin{tikzpicture}
\path [fill=red] (0,0) rectangle (.5,-.5);
\path [fill=red] (0,-0.5) rectangle (.5,-1);
\path [fill=red] (0.5,0) rectangle (1,-.5);
\draw (0,0) --(0,-1.75);
\draw (.5,0) --(.5,-1.75);
\draw (1,0) --(1,-1.75);
\draw (1.5,0) --(1.5,-1.75);
\draw (2,0) --(2,-1.75);
\draw (0,0) --(2.25,0);
\draw (0,-.5) --(2.25,-.5);
\draw (0,-1) --(2.25,-1);
\draw (0,-1.5) --(2.25,-1.5);
\end{tikzpicture}
\end{center}
where one element is on the same row {\em and\/} column as other elements of $S$.
In this section we indicate how this case may be treated by the same technique used
in~\S\ref{singlerow} and~\S\ref{blocks}. Theorem~\ref{thm:Sigblocks} can then be
used to obtain $d_{n,r,S}$ for $S$ consisting of blocks including this configuration as
well as those studied in~\S\ref{blocks}. In \S\ref{ss:22square} we will further extend
the discussion to blocks consisting of $2\times 2$ squares. 
Not surprisingly, as the complexity of $S$
increases so do the technical subtleties needed to evaluate the relevant Segre classes.

\subsection{}\label{ss:corner}
Let $S$ consist of the entries $(1,1)$, $(1,2)$, $(2,1)$, as in the above figure. We will
adopt the set-up in previous sections: $L_S\cong \Pbb^2$ is the span of the three
entries in $S$ as a subspace of $\Pbb^{n^2-1}$; $\hL_S=G(k,n)\times L_S$; and
$\htau_{n,k}\subseteq G(k,n)\times \Pbb^{n^2-1}$ denotes the standard resolution
of the locus $\tau_{n,k}=\sigma_{n,n-k}$ of matrices with bounded rank.
The main information is carried by the Grassmann class of $S$, a projection of the
Segre class of $\hL_S\cap \htau_{n,k}$
in $\hL_S$ (Theorem~\ref{thm:mainred}).

It is easy to provide a qualitative description of $\hL_S\cap \htau_{n,k}$. By definition
(\S\ref{ss:resolution}) this is the locus of pairs $(K,\varphi)\in \hL_S=G(k,n)\times L_S$
such that $K\subseteq \ker\varphi$. We may use coordinates $a_{11}$, $a_{12}$, $a_{21}$
for $L_S\cong \Pbb^2$, corresponding to entries in the matrix
\[
\varphi=\begin{pmatrix}
a_{11} & a_{12} & 0 & \dots  \\
a_{21} & 0 & 0 & \dots  \\
0 & 0 & 0 & \dots  \\
\vdots & \vdots & \vdots & \ddots
\end{pmatrix}\quad.
\]
There are then three possibilities for $\ker\varphi$:
\begin{itemize}
\item If $a_{12}a_{21}\ne 0$, then $\rk \varphi=2$, and $E:=\ker\varphi$ is the 
codimension~$2$ subspace defined by the vanishing of the first two components:
$x_1=x_2=0$;
\item If $a_{12}=0$, defining a line $L_1\subseteq L_S$, then $\rk\varphi=1$, and 
$F:=\ker\varphi$ is the hyperplane defined by $x_1=0$;
\item If $a_{21}=0$, defining a line $L_2\subseteq L_S$ then $\rk\varphi=1$, and 
$F_\varphi:=\ker\varphi$ is the hyperplane with equation $a_{11}x_1+a_{12}x_2=0$.
\end{itemize}
Correspondingly, $\hL_S\cap \htau_{n,k}$ must consist of three components:
\begin{itemize}
\item $X=G(k,E)\times L_S\cong G(k,n-2)\times \Pbb^2$; $\codim_X L_S=2k$.
\item $W_1=G(k,F)\times L_1\cong G(k,n-1)\times \Pbb^1$; $\codim_{W_1} L_S=k+1$.
\item $W_2$, a Grassmann bundle over $L_2\cong \Pbb^1$;
the fiber over $\varphi\in L_2$ is $G(k,F_\varphi)$; 
\newline $\codim_{W_2} L_S=k+1$.
\end{itemize}

\begin{lemma}\label{lem:LS}
$\bullet$ The scheme $\hL_S\cap \htau_{n,k}$ consists of the reduced union of $X$,
$W_1$, $W_2$;

$\bullet$ The varieties $X$, $W_1$, $W_2$ are nonsingular. As classes in 
$\hL_S\cong G(k,n)\times \Pbb^2$,
\begin{align*}
s(X,\hL_S)^{\cO(-H)} &=\frac{c_k(\cSd)^2}{(1-H)\, c(\cSd\otimes\cO(-H))^2} \\
s(W_1,\hL_S)^{\cO(-H)} &= \frac{H\cdot c_k(\cSd)}{(1-H)\, c(\cSd\otimes\cO(-H))} \\
s(W_2,\hL_S)^{\cO(-H)} &= \frac{H\cdot c_k(\cSd\otimes \cO(H))}{(1-H)\, c(\cSd)}\quad.
\end{align*}
where $H$ is the pull-back of the hyperplane class from $\Pbb^2$, and $\cS$ denotes
the (pull-back of the) universal subbundle from $G(k,n)$.
\end{lemma}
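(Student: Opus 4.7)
The plan is to work in explicit local coordinates on $\hL_S$, identify the three set-theoretic components, verify that each is smooth and cut out by a regular section of an easily-described bundle, and then invoke Lemma~\ref{lem:twifacts}~\eqref{pt1} to read off the Segre classes; reducedness of the union requires a separate local ring computation.

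I would begin by choosing an affine patch on $G(k,n)$ where $K$ is the row span of a $k\times n$ matrix $C=(c_{ij})$ with a fixed $k\times k$ identity block, together with projective coordinates $(a_{11}:a_{12}:a_{21})$ on $L_S\cong \Pbb^2$. The equations $A\cdot C^t=0$ produce only two nontrivial rows, giving the $2k$ generators
\[
f_i = a_{11}\,c_{i1}+a_{12}\,c_{i2},\qquad g_i = a_{21}\,c_{i1},\qquad i=1,\dots,k
\]
for the ideal of $\hL_S\cap \htau_{n,k}$. A rank analysis of $\varphi$ (rank $2$ iff $a_{12}a_{21}\ne 0$, rank $1$ otherwise) recovers $X$, $W_1$, $W_2$ as the three candidate components, each evidently smooth of the asserted codimension: $X\cong G(k,n-2)\times \Pbb^2$, $W_1\cong G(k,n-1)\times \Pbb^1$, and $W_2$ is a $G(k,n-1)$-bundle over $L_2\cong \Pbb^1$.

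Next I would express each component as the vanishing scheme of a regular section of a bundle on $\hL_S$: $X$ as the zero scheme of the section of $(\cSd)^2$ obtained by composing $\cS\hookrightarrow \cO^n$ with projection onto the first two coordinates; $W_1$ as the zero scheme of a section of $\cSd\oplus \cO(H)$, namely $\cS\to \cO^n\to \cO$ (first-coordinate projection) together with the linear form $a_{12}$ on $L_S$; and $W_2$ as the zero scheme of a section of $(\cSd\otimes \cO(H))\oplus \cO(H)$, combining $\cS\hookrightarrow \cO^n\to \cO(H)$ via the map $(a_{11},a_{12},0,\dots,0)$ with the linear form $a_{21}$. In each case regularity is forced by the codimension count against the smooth model already established. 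Now Lemma~\ref{lem:twifacts}~\eqref{pt1} gives
\[
s(Z,\hL_S)^{\cO(-H)}=\frac{1}{c(\cO(-H))\,c(N_Z\hL_S\otimes \cO(-H))}\cap[Z],
\]
and the three displayed formulas follow from the simplifications $\cO(H)\otimes \cO(-H)=\cO$ and $(\cSd\otimes \cO(H))\otimes \cO(-H)=\cSd$, combined with the fundamental classes $[X]=c_k(\cSd)^2$, $[W_1]=c_k(\cSd)\cdot H$, and $[W_2]=c_k(\cSd\otimes \cO(H))\cdot H$ provided by the top Chern class of the respective normal bundle.

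The principal obstacle is the scheme-theoretic equality $\hL_S\cap \htau_{n,k}= X\cup W_1\cup W_2$ as the \emph{reduced} union: the components have codimensions $2k$, $k+1$, $k+1$ respectively, so $(f_i,g_i)$ is not a complete intersection and there is a priori room for embedded primes along $X\cap W_1$, $X\cap W_2$, $W_1\cap W_2$. My plan is a direct primary decomposition in the coordinate ring: the relations above lie in each of the prime ideals $I_X=(c_{i1},c_{i2})_i$, $I_{W_1}=(c_{i1})_i+(a_{12})$, $I_{W_2}=(a_{21})+(f_i)_i$, and a colon ideal computation (e.g.~successively saturating by $a_{12}$, $a_{21}$, and one of the $c_{j1}$'s) identifies the ideal generated by the $f_i,g_i$ with the triple intersection $I_X\cap I_{W_1}\cap I_{W_2}$; combined with the generic reducedness from smoothness of each component, this yields the asserted reduced structure.
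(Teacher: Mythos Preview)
Your proposal is correct and follows essentially the same route as the paper: local coordinates for $A\cdot C^t=0$, identification of the three components with their normal bundles, and Lemma~\ref{lem:twifacts}\eqref{pt1} for the twisted Segre classes. You supply considerably more detail than the paper (which asserts the reducedness ``follows at once'' and refers the normal-bundle computations to the one-row case); in particular your colon-ideal verification of the primary decomposition is a welcome elaboration of a step the paper leaves implicit.
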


\begin{proof}
The first statement may be verified by a coordinate computation analogous to the one
performed in the proof of~Lemma~\ref{lem:onerowdes}. Working in local coordinates
for $\hL_S$, e.g., in the chart
\[
\left(
C=\begin{pmatrix}
c_{11} & \dots & c_{1,n-k} & 1 & \dots & 0\\
\vdots & \ddots & \vdots & \vdots & \ddots & \vdots \\
c_{k1} & \dots & c_{k,n-k} & 0 & \dots & 1
\end{pmatrix}
, 
A=\begin{pmatrix}
1 & a_{12} & 0 & \dots  \\
a_{21} & 0 & 0 & \dots  \\
0 & 0 & 0 & \dots  \\
\vdots &  \vdots & \vdots & \ddots 
\end{pmatrix}
\right)
\]
equations for $\hL_S\cap \htau_{k,n}$ are $A\cdot C^t=0$, i.e., generators for the ideal of 
$\hL_S\cap \htau_{k,n}$ in $\hL_S$ in these coordinates are
\[
c_{11}+a_{12}c_{12} \,,\, \dots \,,\, c_{k1}+a_{12}c_{k2}\,,\,
a_{21} c_{11}\,,\,\dots\,,\, a_{21} c_{k1}\quad.
\]
The first point follows at once.

The second point follows from the description of $X$, $W_1$, $W_2$ preceding
the statement, from Lemma~\ref{lem:twifacts} \eqref{pt1}, and from computations of 
normal bundles analogous to those performed in the proof of Corollary~\ref{cor:onerowclass}.
\end{proof}

\begin{remark}
The scheme $\hL_S\cap \htau_{n,k}$ may in fact be described as the zero-scheme
of a section of $(\cSd\oplus \cSd)\otimes \cO(H)$.
\qede\end{remark}

\subsection{}
The fact that the information collected in~Lemma~\ref{lem:LS} is {\em not\/} enough in 
itself to compute $s(\hL_S\cap \htau_{n,k},\hL_S)^{\cO(-H)}$ is a good illustration of the 
subtleties involved in the notion of Segre class; the result that follows involves substantially 
more technical considerations. The classes presented in Lemma~\ref{lem:LS} should at least 
serve to make the general shape of the formula \eqref{eq:segreLS} given below look plausible.
For example, since both the classes for $W_1$ and $W_2$ are multiples of $H$, one should
expect the part of  $s(\hL_S\cap \htau_{n,k},\hL_S)^{\cO(-H)}$ that is {\em not\/} a multiple
of $H$ to agree with $s(X,\hL_S)^{\cO(-H)}$ modulo $H$:
\[
s(\hL_S\cap \htau_{n,k},\hL_S)^{\cO(-H)} = \frac{c_k(\cSd)^2}{c(\cSd)^2}
+ H \bigg(\cdots \bigg)\quad.
\]
On the other hand, since the top-dimensional components of $\hL_S\cap \htau_{n,k}$
are $W_1$ and $W_2$, one should expect the dominant terms in the Segre class to
be
\begin{align*}
H\cdot\left(c_k(\cSd)+c_k(\cSd\otimes \cO(H))\right)
&=H\cdot \left(c_k(\cSd) + c_k(\cSd) + H\cdot c_{k-1}(\cSd)+\cdots\right)
\\ &=2 c_k(\cSd)H + c_{k-1}(\cSd)H^2
\end{align*}
(note that $H^3=0$ since $H$ is the pull-back of the hyperplane in $\Pbb^2$). These 
features are indeed present in~\eqref{eq:segreLS}.

\begin{prop}
\begin{equation}\label{eq:segreLS}
s(\hL_S\cap \htau_{n,k},\hL_S)^{\cO(-H)} = \frac{c_k^2}{c^2}+\left(
\frac{2c_k}{c} - \frac{c_k^2}{c^2} 
\right)H
 + \left( \frac{k\,c_k+c_{k-1}}{c}
+\frac{c_k^2-\sum_{i=0}^k i\,c_ic_k}
{c^2}\right)H^2
\end{equation}
where $c=c(\cSd)$, $c_i=c_i(\cSd)$.
\end{prop}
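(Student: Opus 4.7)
The plan is to treat $Z := \hL_S \cap \htau_{n,k}$ as the zero-scheme of a section of a rank-$2k$ bundle and then correct for the fact that this section fails to be regular. As the Remark preceding the statement indicates, the coordinate calculation in the proof of Lemma~\ref{lem:LS} realizes $Z$ as the zero-scheme of a section $\sigma$ of $\cE := (\cSd \oplus \cSd) \otimes \cO(H)$, obtained by composing the tautological inclusion $\cS \hookrightarrow \cO^n$ with the two maps $\cO^n \to \cO(H)$ determined by the first and second rows of the matrix. A naive application of Lemma~\ref{lem:twifacts}\eqref{pt1} would give $c_k(\cSd\otimes\cO(H))^2 / \bigl((1-H)\,c(\cSd)^2\bigr)$, which matches \eqref{eq:segreLS} at order $H^0$ but not at $H^1$ or $H^2$; the discrepancy reflects the fact that $\sigma$ drops rank along $W_1 \cup W_2$, where $Z$ has codimension $k+1$ rather than the expected $2k$.

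First I would blow up $\hL_S$ along $W_1 \cup W_2$ to resolve the excess. From the local equations displayed in the proof of Lemma~\ref{lem:LS}, the ideals of $W_1$ and $W_2$ are generated by disjoint sets of coordinates ($a_{12}$ and the $c_{i1}+a_{12}c_{i2}$ on the one hand, $a_{21}$ and the $a_{21}c_{i1}$ on the other), so $W_1$ and $W_2$ are splayed in $\hL_S$ and their blow-ups commute; let $\pi : \Til V \to \hL_S$ denote the composite. On $\Til V$ the pullback $\pi^*\sigma$ vanishes to first order along each exceptional divisor $E_i$, and after factoring out $\cO(-E_1-E_2)$ one obtains a regular section $\tilde\sigma$ of $\pi^*\cE \otimes \cO(-E_1-E_2)$ whose zero-scheme is the strict transform $\Til X$ of $X$. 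Thus $\pi^{-1}(Z)$ decomposes scheme-theoretically as $\Til X \cup E_1 \cup E_2$ with known normal bundles.

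Next I would transfer the computation to $\Til V$ via Lemma~\ref{lem:twifacts}\eqref{pt2}:
\[
s(Z, \hL_S)^{\cO(-H)} = \pi_*\!\left(s(\pi^{-1}Z, \Til V)^{\pi^*\cO(-H)}\right),
\]
and evaluate the right-hand side piece by piece. The contribution of $\Til X$ comes from Lemma~\ref{lem:twifacts}\eqref{pt1} applied to its (now regular) embedding in $\Til V$ and pushes forward to the $c_k^2/c^2$ in the $H^0$ coefficient, together with a correction of the form $-(c_k^2/c^2)\,H$ produced by the $\cO(-E_1-E_2)$ twist of the normal bundle. The contributions of $E_1$ and $E_2$, which are projective bundles over $W_1$ and $W_2$ respectively, can be computed from the twisted Segre classes $s(W_i, \hL_S)^{\cO(-H)}$ already recorded in Lemma~\ref{lem:LS}: their leading terms supply the $2c_k/c\cdot H$ in \eqref{eq:segreLS}, while their subleading terms combine with residual contributions from $\Til X$ to build the $H^2$ coefficient.

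The main obstacle is matching this $H^2$ coefficient, specifically the appearance of $(k\,c_k + c_{k-1})/c$ and $-\sum_{i=0}^k i\,c_i c_k /c^2$. These arise from the expansion \eqref{eq:tenpro}, $c_k(\cSd \otimes \cO(H)) = \sum_{i=0}^k H^i c_{k-i}(\cSd)$: the coefficient $i$ in front of $c_i c_k$ tracks the number of ways the twist $\cO(H)$ can contribute one power of $H$ to a given Chern-class monomial, and the $c_{k-1}/c$ piece comes from the twist of the normal bundle of $\Til X$ by $\cO(-E_1-E_2)$. Once all contributions are correctly normalized --- in particular the twists by $\cO(-H)$ versus the exceptional twists by $\cO(-E_i)$ --- the identity follows from straightforward manipulations, but keeping the signs and multiplicities straight is the technically nontrivial part of the argument.
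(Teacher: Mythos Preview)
Your overall strategy---blow up along $W_1\cup W_2$, identify the inverse image of $Z$ as the exceptional divisor together with the proper transform $\Til X$ of $X$, and push forward via Lemma~\ref{lem:twifacts}\eqref{pt2}---is exactly the paper's approach. However, there is one concrete error and one missing ingredient.

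The error: $W_1$ and $W_2$ are \emph{not} splayed. From the coordinate description in the proof of Lemma~\ref{lem:LS}, $W_1$ is cut out by $(a_{12}, c_{11},\dots,c_{k1})$ and $W_2$ by $(a_{21}, c_{11}+a_{12}c_{12},\dots,c_{k1}+a_{12}c_{k2})$; both contain $X$, and their intersection has codimension $k+2$, not $2(k+1)$ as splayedness would require. (Your parenthetical assignment of generators is also garbled: the ideal $(a_{21}, a_{21}c_{i1})$ is just $(a_{21})$, which is not $W_2$.) So you cannot treat the two blow-ups as independent, commuting operations in disjoint coordinates. The paper sidesteps this by blowing up along the union $W_1\cup W_2$ in one step.

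The missing ingredient: your ``piece by piece'' evaluation of $s(\pi^{-1}Z,\Til V)$ is not simply a sum of the Segre classes of $\Til X$, $E_1$, $E_2$---Segre classes are not additive over unions. What organizes these contributions is Fulton's residual intersection formula (Proposition~9.2 in~\cite{85k:14004}, or in the twisted form Proposition~3 of~\cite{MR96d:14004}), which expresses the Segre class of a scheme containing a Cartier divisor $D$ in terms of $D$ and the residual scheme. Here $D$ is the exceptional divisor and the residual is $\Til X$. The paper invokes this formula explicitly; once you do the same (and carry out the push-forward, which the paper itself calls ``notationally rather demanding''), the remainder of your outline goes through.
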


\begin{proof}
We may blow-up $\hL_S$ along $W_1\cup W_2$; this produces a variety $\Til L$
with an exceptional divisor (consisting of two components), and one may verify that the 
(scheme-theoretic) inverse image of $\hL_S\cap \htau_{n,k}$ in $\Til L$ consists of the 
exceptional divisor and of a residual scheme~$\Til X$ that may be verified to equal 
the proper transform of $X$.
Applying Fulton's residual intersection formula (Proposition~9.2 in~\cite{85k:14004},
in the form given in~\cite{MR96d:14004}, Proposition~3) and Lemma~\ref{lem:twifacts} 
\eqref{pt2} one obtains the stated formula. The notationally rather demanding details 
are left to the reader.
\end{proof}

\begin{corol}\label{cor:SigLS}
For $S$ as above, and $k=n-r$,
\[
\Sigma_{n,r,S} =  \frac{k\,c_k+c_{k-1}}{c}+\frac{c_k^2-\sum_{i=0}^k i\,c_ic_k}{c^2}
\]
where $c=c(\cSd)$, $c_i=c_i(\cSd)$.
\end{corol}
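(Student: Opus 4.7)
The corollary will follow directly from the Proposition by pushing forward to the Grassmannian. By Definition~\ref{def:GrassS},
\[
\Sigma_{n,r,S} = \pi_{1*}\bigl(s(\hL_S\cap\htau_{n,k},\hL_S)^{\cO(-H)}\bigr),
\]
where $\pi_1 : \hL_S = G(k,n)\times \Pbb^{s-1} = G(k,n)\times \Pbb^2 \to G(k,n)$ is the first projection. The plan is simply to apply $\pi_{1*}$ to the explicit expression~\eqref{eq:segreLS} provided by the preceding Proposition.

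For any class on $G(k,n)\times \Pbb^2$ written as a polynomial $\alpha_0 + \alpha_1 H + \alpha_2 H^2$ with $\alpha_i$ pulled back from $G(k,n)$ and $H$ the hyperplane class of $\Pbb^2$, one has $\pi_{1*}(\alpha_0+\alpha_1H+\alpha_2H^2) = \alpha_2$: push-forward along the $\Pbb^2$-factor extracts the coefficient of $H^{s-1}=H^2$ (and kills lower powers). This is the observation already used in~\S\ref{ss:singlerowlocus}. Formula~\eqref{eq:segreLS} exhibits $s(\hL_S\cap\htau_{n,k},\hL_S)^{\cO(-H)}$ in exactly this form — truncated at degree~$2$ in $H$ because $\dim L_S=2$ — so reading off the coefficient of $H^2$ yields
\[
\Sigma_{n,r,S} = \frac{k\,c_k+c_{k-1}}{c} + \frac{c_k^2 - \sum_{i=0}^k i\,c_ic_k}{c^2},
\]
which is the asserted formula.

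There is no real obstacle at this step: the corollary is a one-line extraction of the top $H$-coefficient from the Proposition. All of the genuine technical work — the set-theoretic decomposition of $\hL_S\cap\htau_{n,k}$ into the components $X$, $W_1$, $W_2$ in Lemma~\ref{lem:LS}, the blow-up of $\hL_S$ along $W_1\cup W_2$, and the application of Fulton's residual intersection formula together with Lemma~\ref{lem:twifacts}~\eqref{pt2} — has already been absorbed into establishing~\eqref{eq:segreLS}.
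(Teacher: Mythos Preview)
Your proposal is correct and matches the paper's own approach: the corollary is stated immediately after the Proposition with no separate proof, precisely because it follows by reading off the coefficient of $H^{s-1}=H^2$ in~\eqref{eq:segreLS}, using the observation (recorded just before Definition~\ref{def:GrassS}) that $\pi_{1*}$ acts by extracting this coefficient.
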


\subsection{}
Theorem~\ref{thm:mainred} now yields a computation of the degrees:

\begin{theorem}\label{thm:LSdegs}
For $S=\{(1,1),(1,2),(2,1)\}$, the degrees $d_{n,r,S}$ of the projections of the rank
loci are given by
\begin{equation}\label{eq:LSdegs}
\int_{G(k,n)} c^{n-2} 
\left(c^2-(k c_k+c_{k-1})c
-(c_k^2-\sum_{i=0}^k i c_i c_k)\right)
\end{equation}
with $k=n-r$, and where $c=c(\cSd)$, $c_i=c_i(\cSd)$.
\end{theorem}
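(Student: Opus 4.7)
The plan is to deduce this statement as a direct consequence of two results already established in the excerpt: Theorem~\ref{thm:mainred}, which reduces the computation of $d_{n,r,S}$ to an intersection product on $G(n-r,n)$ involving the Grassmann class $\Sigma_{n,r,S}$, and Corollary~\ref{cor:SigLS}, which provides an explicit formula for $\Sigma_{n,r,S}$ in the present case. No further geometric input is needed at this stage, since the hard work (the residual intersection argument on the blow-up of $\hL_S$ along $W_1\cup W_2$) has already been carried out in the proof of the proposition preceding Corollary~\ref{cor:SigLS}.

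Concretely, I would first recall that by Theorem~\ref{thm:mainred},
\[
d_{n,r,S} = \int_{G(k,n)} c(\cSd)^n \cap \bigl(1-\Sigma_{n,r,S}\bigr),
\]
and then substitute the expression for $\Sigma_{n,r,S}$ obtained in Corollary~\ref{cor:SigLS}:
\[
\Sigma_{n,r,S} = \frac{k\,c_k+c_{k-1}}{c}+\frac{c_k^2-\sum_{i=0}^k i\,c_i c_k}{c^2},
\]
with $c=c(\cSd)$ and $c_i=c_i(\cSd)$. Expanding $c^n\,(1-\Sigma_{n,r,S})$ and factoring $c^{n-2}$ out (which is legitimate because $c$ is the total Chern class of $\cSd$ and hence invertible in the Chow ring), one lands immediately on the integrand displayed in~\eqref{eq:LSdegs}.

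The remaining step is purely formal: confirm that
\[
c^n\left(1 - \frac{k\,c_k+c_{k-1}}{c} - \frac{c_k^2-\sum_{i=0}^k i\,c_i c_k}{c^2}\right)
= c^{n-2}\Bigl(c^2-(k c_k+c_{k-1})c-\bigl(c_k^2-\sum_{i=0}^k i c_i c_k\bigr)\Bigr),
\]
which is just multiplication by $c^2/c^2$. There is no real obstacle in the proof of the theorem as stated; the substantive content lives upstream, in the residual intersection computation used to derive~\eqref{eq:segreLS} and hence Corollary~\ref{cor:SigLS}. If anything were to require care, it would be ensuring that the identification $\pi_{1*}$ with ``coefficient of $H^{s-1}$'' has been applied correctly in passing from $s(\hL_S\cap\htau_{n,k},\hL_S)^{\cO(-H)}$ to $\Sigma_{n,r,S}$, but this is precisely what was checked in deriving Corollary~\ref{cor:SigLS} from~\eqref{eq:segreLS} (taking the coefficient of $H^2$, since here $s=3$).
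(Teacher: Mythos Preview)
Your proposal is correct and follows exactly the route the paper takes: the paper simply introduces Theorem~\ref{thm:LSdegs} with the sentence ``Theorem~\ref{thm:mainred} now yields a computation of the degrees,'' leaving implicit precisely the substitution of $\Sigma_{n,r,S}$ from Corollary~\ref{cor:SigLS} and the trivial algebraic clearing of denominators that you spell out. There is nothing to add.
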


\begin{example}
For $n=7$:
\[
\begin{tikzpicture}[baseline=(current  bounding  box.center)]
\path [fill=red] (0,0) rectangle (.5,-.5);
\path [fill=red] (0,-0.5) rectangle (.5,-1);
\path [fill=red] (0.5,0) rectangle (1,-.5);
\draw (0,0) --(0,-3.5);
\draw (.5,0) --(.5,-3.5);
\draw (1,0) --(1,-3.5);
\draw (1.5,0) --(1.5,-3.5);
\draw (2,0) --(2,-3.5);
\draw (2.5,0) --(2.5,-3.5);
\draw (3,0) --(3,-3.5);
\draw (3.5,0) --(3.5,-3.5);
\draw (0,0) --(3.5,0);
\draw (0,-.5) --(3.5,-.5);
\draw (0,-1) --(3.5,-1);
\draw (0,-1.5) --(3.5,-1.5);
\draw (0,-2) --(3.5,-2);
\draw (0,-2.5) --(3.5,-2.5);
\draw (0,-3) --(3.5,-3);
\draw (0,-3.5) --(3.5,-3.5);
\end{tikzpicture}
\quad\quad\quad\quad
d_{7,r,S}=\begin{cases}
912 & r=1 \\
17303 & r=2 \\
15218 & r=3 \\
1001 & r=4 \\
6 & r=5 \\
0 & r=6 \\
0 & r=7
\end{cases}
\]
(Compiling this table took $0.06$ seconds on our computing equipment, {\it vs.}~about 15 
minutes by ordinary degree calculations. The case $n=8$ takes $0.13$ seconds applying 
Theorem~\ref{thm:LSdegs}, giving degrees $3418, 217007, 592956, 118188, 2548, 7, 0, 0$
for $r=1,\dots, 8$. This is already beyond the reach of a straightforward implementation
of a more direct method, at least on equipment available to us.)
\qede\end{example}

\begin{remark}\label{rem:onecor}
It appears that $d_{n,n-2,S}=n-1$ (as can probably be readily verified using~\eqref{eq:LSdegs}),
while
\[
d_{n,n-3,S} = 1,14,84,330,1001,2548,5712,\dots	\quad\quad(n\ge 3)
\]
are the dimensions $\dim V^{(n-3)}$ appearing in the case $a=1$ of Theorem~7.2 
in~\cite{MR2204753}, on Lie algebras in the subexceptional series. These numbers also
arise in enumerating paths in the plane (cf.~\cite{oeis}), and as Kekul\'e numbers for benzenoid 
hydrocarbons configured as prolate pentagons $D^i(3,n-2)$, \cite{benzenoids1988kekule}, 
p.~183.
\qede\end{remark}

\subsection{}
Using Theorem~\ref{thm:Sigblocks}, we can now compute the degrees $d_{n,r,S}$ for 
$S$ obtained by including blocks with the corner shape analyzed above. For example, 
$S$ could consist of several such `corners', provided that rows and columns of distinct 
configurations do not overlap.
\begin{center}
\begin{tikzpicture}
\path [fill=red] (0,0) rectangle (.5,-.5);
\path [fill=red] (0,-0.5) rectangle (.5,-1);
\path [fill=red] (0.5,0) rectangle (1,-.5);
\path [fill=red] (1,-1) rectangle (1.5,-1.5);
\path [fill=red] (1,-1.5) rectangle (1.5,-2);
\path [fill=red] (1.5,-1) rectangle (2,-1.5);
\path [fill=red] (2,-2) rectangle (2.5,-2.5);
\path [fill=red] (2,-2.5) rectangle (2.5,-3);
\path [fill=red] (2.5,-2) rectangle (3,-2.5);
\draw (0,0) --(0,-3.25);
\draw (.5,0) --(.5,-3.25);
\draw (1,0) --(1,-3.25);
\draw (1.5,0) --(1.5,-3.25);
\draw (2,0) --(2,-3.25);
\draw (2.5,0) --(2.5,-3.25);
\draw (3,0) --(3,-3.25);
\draw (0,0) --(3.25,0);
\draw (0,-.5) --(3.25,-.5);
\draw (0,-1) --(3.25,-1);
\draw (0,-1.5) --(3.25,-1.5);
\draw (0,-2) --(3.25,-2);
\draw (0,-2.5) --(3.25,-2.5);
\draw (0,-3) --(3.25,-3);
\end{tikzpicture}
\end{center}

\begin{theorem}\label{thm:manycorners}
Suppose $S=\cup_{i=1}^g \{(a_i,b_i),(a_i,b_i+1),(a_i+1,b_i)\}$, where the different groups
have no overlapping rows or columns. Then
\begin{equation*}
d_{n,r,S} = 
\int_{G(k,n)} c^{n-2g} 
\left(c^2-(k c_k+c_{k-1})c
-(c_k^2-\sum_{i=0}^k i c_i c_k)\right)^g
\end{equation*}
with $k=n-r$, and where $c=c(\cSd)$, $c_i=c_i(\cSd)$.
\end{theorem}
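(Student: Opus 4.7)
The plan is to obtain Theorem~\ref{thm:manycorners} as a direct consequence of the three main tools already established in the preceding sections, namely Theorem~\ref{thm:mainred} (reduction to an intersection number on $G(k,n)$ involving the Grassmann class), Theorem~\ref{thm:Sigblocks} (multiplicativity of $1-\Sigma_{n,r,S}$ over block decompositions), and Corollary~\ref{cor:SigLS} (the explicit Grassmann class for a single `corner' configuration). There is no genuinely new geometric input required; the argument is simply the consolidation of these three facts.

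First I would verify that the hypothesis of Theorem~\ref{thm:Sigblocks} is satisfied. The set $S$ is, by assumption, the union of $g$ corner subsets $S_i=\{(a_i,b_i),(a_i,b_i+1),(a_i+1,b_i)\}$, with the rows $\{a_i,a_i+1\}$ and columns $\{b_i,b_i+1\}$ used by distinct $S_i$'s pairwise disjoint. This is exactly the block condition of~\S\ref{ss:conessplayed}, so Theorem~\ref{thm:Sigblocks} gives
\[
1-\Sigma_{n,r,S}\;=\;\prod_{i=1}^g \bigl(1-\Sigma_{n,r,S_i}\bigr)
\]
in the Chow ring of $G(k,n)$, with $k=n-r$.

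Next I would invoke Corollary~\ref{cor:SigLS} for each factor. Since $d_{n,r,S}$ is invariant under row and column permutations (Remark~\ref{rem:perm}), each block $S_i$ gives the same Grassmann class as the standard corner $\{(1,1),(1,2),(2,1)\}$, namely
\[
\Sigma_{n,r,S_i}\;=\;\frac{k\,c_k+c_{k-1}}{c}\;+\;\frac{c_k^2-\sum_{j=0}^k j\,c_j c_k}{c^2},
\]
where $c=c(\cSd)$ and $c_j=c_j(\cSd)$. Clearing the denominator $c^2$ yields
\[
1-\Sigma_{n,r,S_i}\;=\;\frac{c^2-(k\,c_k+c_{k-1})c-\bigl(c_k^2-\sum_{j=0}^k j\,c_j c_k\bigr)}{c^2}.
\]

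Finally I would substitute into Theorem~\ref{thm:mainred}. Multiplying the $g$ identical factors above and pairing with $c(\cSd)^n=c^n$, the $c^{2g}$ in the denominator combines with $c^n$ to give $c^{n-2g}$, producing
\[
d_{n,r,S}\;=\;\int_{G(k,n)} c^n\,(1-\Sigma_{n,r,S})\;=\;\int_{G(k,n)} c^{n-2g}\Bigl(c^2-(k\,c_k+c_{k-1})c-\bigl(c_k^2-\textstyle\sum_{j=0}^k j\,c_j c_k\bigr)\Bigr)^g,
\]
which is the claimed formula. Since every step is a direct application of results already proved, there is no real obstacle; the only mild bookkeeping point is confirming that the `no overlapping rows or columns' hypothesis is precisely the splayedness condition required by Theorem~\ref{thm:Sigblocks}, which it is by construction.
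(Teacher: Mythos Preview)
Your proposal is correct and follows essentially the same route as the paper: the paper's own proof simply reads ``This follows from Theorem~\ref{thm:Sigblocks} and Corollary~\ref{cor:SigLS},'' and you have spelled out precisely that deduction together with the requisite application of Theorem~\ref{thm:mainred}. The only minor remark is that your appeal to Remark~\ref{rem:perm} technically concerns invariance of $d_{n,r,S}$ rather than of $\Sigma_{n,r,S}$; but the Grassmann class of a corner block is visibly independent of its position (the local computation in \S\ref{ss:corner} works verbatim for any $(a_i,b_i)$), so this is harmless.
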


\begin{proof}
This follows from Theorem~\ref{thm:Sigblocks} and Corollary~\ref{cor:SigLS}.
\end{proof}

\begin{example}
For $n=7,g=3$, and after a permutation of the rows of the matrix:
\[
\begin{tikzpicture}[baseline=(current  bounding  box.center)]
\path [fill=red] (0,0) rectangle (.5,-.5);
\path [fill=red] (0,-.5) rectangle (.5,-1);
\path [fill=red] (.5,-.5) rectangle (1,-1);
\path [fill=red] (1,-1) rectangle (1.5,-1.5);
\path [fill=red] (1,-1.5) rectangle (1.5,-2);
\path [fill=red] (1.5,-1.5) rectangle (2,-2);
\path [fill=red] (2,-2) rectangle (2.5,-2.5);
\path [fill=red] (2,-2.5) rectangle (2.5,-3);
\path [fill=red] (2.5,-2.5) rectangle (3,-3);
\draw (0,0) --(0,-3.5);
\draw (.5,0) --(.5,-3.5);
\draw (1,0) --(1,-3.5);
\draw (1.5,0) --(1.5,-3.5);
\draw (2,0) --(2,-3.5);
\draw (2.5,0) --(2.5,-3.5);
\draw (3,0) --(3,-3.5);
\draw (3.5,0) --(3.5,-3.5);
\draw (0,0) --(3.5,0);
\draw (0,-.5) --(3.5,-.5);
\draw (0,-1) --(3.5,-1);
\draw (0,-1.5) --(3.5,-1.5);
\draw (0,-2) --(3.5,-2);
\draw (0,-2.5) --(3.5,-2.5);
\draw (0,-3) --(3.5,-3);
\draw (0,-3.5) --(3.5,-3.5);
\end{tikzpicture}
\quad\quad\quad\quad
d_{7,r,S}=\begin{cases}
888 & r=1 \\
13395 & r=2 \\
4078 & r=3 \\
2 & r=4 \\
0 & r=5 \\
0 & r=6 \\
0 & r=7
\end{cases}
\]
The $r=2$ case was mentioned in the introduction, and the $r=4$ case in Remark~\ref{rem:dto1}.
\qede\end{example}

\begin{example}
By the same token, one can include all blocks for which we have computed Grassmann
classes. For instance:
\[
\begin{tikzpicture}[baseline=(current  bounding  box.center)]
\path [fill=red] (0,0) rectangle (.5,-.5);
\path [fill=red] (0,-.5) rectangle (.5,-1);
\path [fill=red] (.5,-.5) rectangle (1,-1);
\path [fill=red] (1,-1) rectangle (1.5,-2);
\path [fill=red] (1.5,-2) rectangle (2.5,-2.5);
\path [fill=red] (2.5,-2.5) rectangle (3,-3);
\path [fill=red] (2.5,-3) rectangle (3,-3.5);
\path [fill=red] (3,-3) rectangle (3.5,-3.5);
\draw (0,0) --(0,-3.5);
\draw (.5,0) --(.5,-3.5);
\draw (1,0) --(1,-3.5);
\draw (1.5,0) --(1.5,-3.5);
\draw (2,0) --(2,-3.5);
\draw (2.5,0) --(2.5,-3.5);
\draw (3,0) --(3,-3.5);
\draw (3.5,0) --(3.5,-3.5);
\draw (0,0) --(3.5,0);
\draw (0,-.5) --(3.5,-.5);
\draw (0,-1) --(3.5,-1);
\draw (0,-1.5) --(3.5,-1.5);
\draw (0,-2) --(3.5,-2);
\draw (0,-2.5) --(3.5,-2.5);
\draw (0,-3) --(3.5,-3);
\draw (0,-3.5) --(3.5,-3.5);
\end{tikzpicture}
\quad\quad\quad\quad
d_{7,r,S}=\begin{cases}
886 & r=1 \\
12967 & r=2 \\
3102 & r=3 \\
0 & r=4 \\
0 & r=5 \\
0 & r=6 \\
0 & r=7
\end{cases}
\]
The degrees are obtained by applying Theorem~\ref{thm:Sigblocks} together with
the computation of the relevant classes $\Sigma_{n,r,S}$ in Corollary~\ref{cor:SigmaSrow},
\ref{cor:SigmaScol}, and~\ref{cor:SigLS}.
\qede\end{example}

\subsection{}\label{ss:22square}
As a last example we consider a set $S$ consisting of entries in a $2\times 2$ square:
for instance, $S=\{(1,1),(1,2),(2,1),(2,2)\}$.
\begin{center}
\begin{tikzpicture}
\path [fill=red] (0,0) rectangle (.5,-.5);
\path [fill=red] (0,-0.5) rectangle (.5,-1);
\path [fill=red] (0.5,0) rectangle (1,-.5);
\path [fill=red] (0.5,-0.5) rectangle (1,-1);
\draw (0,0) --(0,-1.75);
\draw (.5,0) --(.5,-1.75);
\draw (1,0) --(1,-1.75);
\draw (1.5,0) --(1.5,-1.75);
\draw (2,0) --(2,-1.75);
\draw (0,0) --(2.25,0);
\draw (0,-.5) --(2.25,-.5);
\draw (0,-1) --(2.25,-1);
\draw (0,-1.5) --(2.25,-1.5);
\end{tikzpicture}
\end{center}
The determination of $\hL_S\cap \htau_{n,k}$ as a subscheme of $\hL_S=G(k,n)\times \Pbb^3$
is very similar to the one reviewed for the case considered 
in \S\ref{ss:corner}. This locus consists of two components: a codimension~$2k$ component 
dominating $\Pbb^3$, and a codimension~$k+1$ component dominating the determinantal
quadric $Q$ given by $a_{11}a_{22}-a_{12}a_{21}=0$. The Segre class may be computed 
by blowing up this latter locus and using residual intersection and Lemma~\ref{lem:twifacts} 
\eqref{pt2}, as in~\S\ref{ss:corner}. The computation yields the class
\begin{multline*}
s(\hL_S\cap \htau_{n,k},\hL_S)^{\cO(-H)}
=\frac {[G(k,n-2)\times \Pbb^3]}{(1+H) c(\cSd\otimes \cO(H))^2} 
+\frac{c_k(\cSd\otimes \cO(h_1))[G(k,n)\times Q]}{(1-H)(1+H)c(\cSd\otimes \cO(-h_2))}\\
+\frac{\left( \sum_{j=0}^2 \sum_{i=0}^k \binom i{j+1} (1-H)^{i-j} c_{k-i}(\cSd\otimes \cO(h_1))
(2H)^j\right)[G(k,n-2)\times Q]}{(1-H)(1+H) c(\cSd\otimes \cO(-h_2))
c(\cSd\otimes \cO(H))^2}\quad,
\end{multline*}
where $H$ denotes the pull-back of the hyperplane class in $\Pbb^3$ and of its restriction
to $Q$, and $h_1$, $h_2$ are pull-backs of the classes from the two rulings on $Q$.

The Grassmann class of a $2\times 2$ square can be computed from this expression:
\begin{multline*}
\Sigma_{n,r,S} = -\sum_{i=0}^r \sum_{j=0}^r \binom{2k+i+j+3}3
	c_i(\cQd) c_j(\cQd) c_k(\cSd)^2 
+\sum_{j=0}^r c_j(\cQd)\big(2 c_k(\cSd) \\
+(k+j) c_{k-1}(\cSd)\big) 
+\sum_{i=0}^k \sum_{u=0}^r \sum_{v=0}^r \sum_{w=0}^r
\left( 
2\binom i3 - 2(k-u+v+w+1)\binom i2 \right. \\
\left. + (k-u+v+w+2)(2k+v+w+1)\binom i1
\right) c_u(\cQd) c_v(\cQd) c_w(\cQd) c_{k-i}(\cSd)c_k(\cSd)^2\quad.
\end{multline*}

Theorem~\ref{thm:mainred} can then be used to compute $d_{n,r,S}$ for sets $S$
including blocks consisting of $2\times 2$ squares.

\begin{example}\label{ex:squareexs}
For $n=7$:
\[
\begin{tikzpicture}[baseline=(current  bounding  box.center)]
\path [fill=red] (0,0) rectangle (.5,-.5);
\path [fill=red] (0,-0.5) rectangle (.5,-1);
\path [fill=red] (0.5,0) rectangle (1,-.5);
\path [fill=red] (0.5,-0.5) rectangle (1,-1);
\draw (0,0) --(0,-3.5);
\draw (.5,0) --(.5,-3.5);
\draw (1,0) --(1,-3.5);
\draw (1.5,0) --(1.5,-3.5);
\draw (2,0) --(2,-3.5);
\draw (2.5,0) --(2.5,-3.5);
\draw (3,0) --(3,-3.5);
\draw (3.5,0) --(3.5,-3.5);
\draw (0,0) --(3.5,0);
\draw (0,-.5) --(3.5,-.5);
\draw (0,-1) --(3.5,-1);
\draw (0,-1.5) --(3.5,-1.5);
\draw (0,-2) --(3.5,-2);
\draw (0,-2.5) --(3.5,-2.5);
\draw (0,-3) --(3.5,-3);
\draw (0,-3.5) --(3.5,-3.5);
\end{tikzpicture}
\quad\quad\quad\quad
d_{7,r,S}=\begin{cases}
887 & r=1 \\
14701 & r=2 \\
9478 & r=3 \\
371 & r=4 \\
1 & r=5 \\
0 & r=6 \\
0 & r=7
\end{cases}
\]
\[
\begin{tikzpicture}[baseline=(current  bounding  box.center)]
\path [fill=red] (0,0) rectangle (.5,-1);
\path [fill=red] (.5,-1) rectangle (1.5,-1.5);
\path [fill=red] (1.5,-1.5) rectangle (2,-2.5);
\path [fill=red] (2,-2) rectangle (2.5,-2.5);
\path [fill=red] (2.5,-2.5) rectangle (3.5,-3.5);
\draw (0,0) --(0,-3.5);
\draw (.5,0) --(.5,-3.5);
\draw (1,0) --(1,-3.5);
\draw (1.5,0) --(1.5,-3.5);
\draw (2,0) --(2,-3.5);
\draw (2.5,0) --(2.5,-3.5);
\draw (3,0) --(3,-3.5);
\draw (3.5,0) --(3.5,-3.5);
\draw (0,0) --(3.5,0);
\draw (0,-.5) --(3.5,-.5);
\draw (0,-1) --(3.5,-1);
\draw (0,-1.5) --(3.5,-1.5);
\draw (0,-2) --(3.5,-2);
\draw (0,-2.5) --(3.5,-2.5);
\draw (0,-3) --(3.5,-3);
\draw (0,-3.5) --(3.5,-3.5);
\end{tikzpicture}
\quad\quad\quad\quad
d_{7,r,S}=\begin{cases}
861 & r=1 \\
10701 & r=2 \\
1424 & r=3 \\
0 & r=4 \\
0 & r=5 \\
0 & r=6 \\
0 & r=7
\end{cases}
\]
For $S$ consisting of a single $2\times 2$ square, the degrees $d_{n,n-3,S}$:
\[
d_{n,n-3,S} = 1,10,46,146,371,812,1596,2892,\dots \quad\quad(n\ge 3)
\]
agree with the Kekul\'e numbers for benzenoid hydrocarbons in the symmetric `chevron' 
configuration $Ch(3,n-3)$, cf.~\cite{benzenoids1988kekule}, p.~120.
\begin{center}
\begin{tikzpicture}
  \begin{scope}[%
every node/.style={anchor=west,
regular polygon,
regular polygon rotate = 30, 
regular polygon sides=6,
draw,
minimum width=15pt,
outer sep=0,
},
      transform shape]
    \node (A) {};
    \node (B) at (A.side 5) {};
    \node (C) at (B.side 5) {};
    \node (D) at (C.side 5) {};
  \end{scope}
  \begin{scope}[%
every node/.style={anchor=north,
regular polygon,
regular polygon rotate = 30, 
regular polygon sides=6,
draw,
minimum width=15pt,
outer sep=0,
},
      transform shape]
    \node ({EA}) at (A.corner 3) {};
    \node ({FA}) at ({EA}.corner 3) {};
    \node ({GA}) at ({FA}.corner 5) {};
    \node ({HA}) at ({GA}.corner 5) {};
    \node ({EB}) at (B.corner 3) {};
    \node ({FB}) at ({EB}.corner 3) {};
    \node ({GB}) at ({FB}.corner 5) {};
    \node ({HB}) at ({GB}.corner 5) {};
    \node ({EC}) at (C.corner 3) {};
    \node ({FC}) at ({EC}.corner 3) {};
    \node ({GC}) at ({FC}.corner 5) {};
    \node ({HC}) at ({GC}.corner 5) {};
    \node ({ED}) at (D.corner 3) {};
    \node ({FD}) at ({ED}.corner 3) {};
    \node ({GD}) at ({FD}.corner 5) {};
    \node ({HD}) at ({GD}.corner 5) {};
  \end{scope}
	\draw [decorate,decoration={brace,amplitude=5pt},
			xshift=0pt, yshift=0pt]
			(1.72,-1.85) -- (0.1,-1.85)
			node[black,midway,yshift=-10pt]
				{$n-3$};
\end{tikzpicture}
\end{center}
For $S$ consisting of {\em three\/} blocks of $2\times 2$ squares, the numbers
\[
d_{n,n-4,S} = 105, 336, 825, 1716, 3185, 5440, \dots\quad\quad (n\ge 6)
\]
agree with $d_{2n+1,2n-2|2|3}$, cf.~\S\ref{ss:benzene2}, and are in particular
Kekul\'e numbers for hexagonal configurations of benzenoids.
\qede\end{example}


\newcommand{\etalchar}[1]{$^{#1}$}

\end{document}